

\documentclass{amsart}
\usepackage{amsmath}
\usepackage{amsfonts}
\usepackage{graphics}
\usepackage{epsfig} 
\usepackage{amssymb}
\usepackage{amscd}
\usepackage[all]{xy}
\usepackage{latexsym}
\usepackage{graphicx}
\usepackage{multirow}
\usepackage{geometry}

\newtheorem{thm}{Theorem}[section]
\newtheorem{cor}[thm]{Corollary}
\newtheorem{lem}[thm]{Lemma}
\newtheorem{prop}[thm]{Proposition}
\newtheorem{conj}[thm]{Conjecture}
\theoremstyle{definition}
\newtheorem{Def}[thm]{Definition}
\newtheorem{rem}[thm]{Remark}
\newtheorem*{ack}{Acknowledgement}

\numberwithin{equation}{section}
\numberwithin{figure}{section}


\def\rchi{{\hbox{\raise1.5pt\hbox{$\chi$}}}}
\def\Aut{{\text{\rm{Aut}}}}

\def\tensor{\otimes}

\def\const{{\text{\rm{const}}}}

\def\a{\alpha}
\def\b{\beta}
\def\lam{\lambda}
\def\Lam{\Lambda}

\def\Res{{\text{\rm{Res}}}}


\newcommand{\bP}{{\mathbb{P}}}
\newcommand{\bC}{{\mathbb{C}}}

\newcommand{\bN}{{\mathbb{N}}}
\newcommand{\bE}{{\mathbb{E}}}

\newcommand{\bQ}{{\mathbb{Q}}}
\newcommand{\bR}{{\mathbb{R}}}

\newcommand{\bZ}{{\mathbb{Z}}}

\newcommand{\cM}{{\mathcal{M}}}

\newcommand{\cH}{{\mathcal{H}}}

\newcommand{\la}{{\langle}}
\newcommand{\ra}{{\rangle}}
\newcommand{\half}{{\frac{1}{2}}}

\newcommand{\hxi}{{\hat{\xi}}}
\newcommand{\hatH}{{\widehat{\cH}}}

\textwidth = 6in
\oddsidemargin = 0.25in
\evensidemargin = 0.25in
\textheight = 8.7in
\topmargin = -0.2in

\begin{document}
\large
\setcounter{section}{0}

\title[{L}aplace transform of {H}urwitz numbers]{The
{L}aplace transform  of the cut-and-join equation
and the {B}ouchard-{M}ari\~no conjecture on
{H}urwitz numbers}
\author[B.~Eynard]{Bertrand Eynard}  
\address{
Service de Physique Th\'eorique de Saclay\\
Gif-sur-Yvette\\
F-91191 Cedex, France}
\email{bertrand.eynard@cea.fr}
\author[M.~Mulase]{Motohico Mulase}  
\address{
Department of Mathematics\\
University of California\\
Davis, CA 95616--8633}
\email{mulase@math.ucdavis.edu}
\author[B.~Safnuk]{Bradley Safnuk}  
\address{
Department of Mathematics\\
Central Michigan University\\
Mount Pleasant, MI 48859}
\email{brad.safnuk@cmich.edu}

\begin{abstract}
We calculate the Laplace transform of the
cut-and-join equation of Goulden, Jackson and Vakil.
The result is a polynomial equation that has the
topological structure identical to the 
Mirzakhani recursion formula
for the Weil-Petersson volume of the moduli space
of bordered hyperbolic surfaces.
We find that the direct image of this
Laplace transformed equation via the inverse of  the
Lambert W-function is the topological recursion 
formula for Hurwitz numbers
conjectured by Bouchard and Mari\~no using 
topological string theory.
\end{abstract}

\subjclass[2000]{14H10, 14N10, 14N35; 05A15, 05A17; 81T45}

\thanks{Saclay preprint number: IPHT T09/101}

\maketitle

\allowdisplaybreaks

\tableofcontents

\section{Introduction}
\label{sect:intro}

The purpose of this paper is to 
give a proof of the
Bouchard-Mari\~no conjecture \cite{BM} on  Hurwitz numbers
using
the Laplace transform of the 
celebrated \emph{cut-and-join equation} 
of Goulden, Jackson, and Vakil \cite{GJ, V}.
The cut-and-join equation, which seems to be
essentially known to Hurwitz \cite{H},
 expresses the Hurwitz number of a
given genus and profile (partition) in terms of those corresponding to 
profiles modified by either \emph{cutting} a part into two pieces
or \emph{joining} two parts into one. 
This equation holds for an arbitrary partition $\mu$.
We calculate the Laplace transform of this equation 
with $\mu$ as the summation variable. The result is a
\emph{polynomial equation} \cite{MZ}.

A \emph{Hurwitz} cover is a holomorphic mapping 
$f:X\rightarrow \mathbb{P}^1$ from 
a connected nonsingular projective 
algebraic curve $X$ of genus $g$ to 
the projective line $\mathbb{P}^1$ with only simple 
ramifications except for $\infty \in\mathbb{P}^1$. 
Such a cover is further refined by specifying its
\emph{profile}, which is a partition $\mu = (\mu_1\ge \mu_2 \ge
\cdots \ge \mu_\ell >0)$ of  the degree of the covering $\deg f = |\mu| =
\mu_1 + \cdots + \mu_\ell$.  The length  $\ell(\mu) = \ell$
 of this partition
is the number of points in the inverse image  $f^{-1}(\infty) 
= \{p_1,\dots,p_\ell\}$ of 
$\infty$. 
Each part $\mu_i$ gives a local description of the map $f$,
which is given by $z\longmapsto z^{\mu_i}$ in terms of a local
coordinate $z$ of $X$ around $p_i$. 
The number 
$h_{g,\mu}$ of topological
types of Hurwitz covers of given genus $g$ and profile 
$\mu$, counted with the weight factor $1/|\Aut f|$, is the \emph{Hurwitz
number} we shall deal with in this paper. 
A remarkable formula due to 
Ekedahl, Lando, Shapiro and Vainshtein
\cite{ELSV, GV1, OP1} 
relates Hurwitz numbers and Gromov-Witten
invariants.  For genus
$g\ge 0$ and a partition $\mu$ subject to the stability condition
 $2g - 2 +\ell(\mu) >0$, the ELSV formula states that
\begin{equation}
\label{eq:ELSV}
	h_{g,\mu} = \frac{\big( 2g-2+\ell (\mu)+|\mu|\big)!}{|\Aut( \mu)|}\;  \prod_{i=1}^{\ell(\mu)}\frac{\mu_i ^{\mu_i}}{\mu_i!}\int_{\overline{\mathcal{M}}_{g,\ell(\mu)}} \frac{\Lambda_g^{\vee}(1)}{\prod_{i=1}^{\ell(\mu)}\big( 1-\mu_i \psi_i\big)},
\end{equation}
where $\overline{\mathcal{M}}_{g,\ell}$ is 
the Deligne-Mumford moduli stack of stable algebraic
curves of genus $g$ with $\ell$ distinct marked points, 
$\Lambda_g^{\vee}(1)=1-c_{1}(\bE)+\cdots +(-1)^g
c_{g}(\bE)$ is the alternating sum of
Chern classes of the Hodge bundle $\bE$  
 on $\overline{\mathcal{M}}_{g,\ell}$, 
$\psi_i$ is the $i$-th tautological cotangent class, 
and $\Aut(\mu)$ denotes the group of permutations of 
equal parts of the partition $\mu$. The linear Hodge integrals
are the rational numbers defined by 
\begin{equation*}
\la \tau_{n_1}\cdots \tau_{n_\ell}c_{j}(\bE)\ra= \int_{\overline{\mathcal{M}}_{g,\ell}}\psi_1^{n_1}\cdots\psi_\ell ^{n_\ell}c_j(\bE),
\end{equation*}
which is $0$ unless $n_1+\cdots+n_\ell +j=3g-3+\ell$.
To present our main theorem, 
let us  introduce a series of polynomials $\hxi_n(t)$ of
degree $2n+1$ in $t$ for $n\ge 0$ by the
recursion formula
\begin{equation*}
\hxi_{n}(t) = t^2(t-1)\frac{d}{dt}\hat{\xi}_{n-1}(t)
\end{equation*}
with the initial condition
$\hxi_{0}(t) =t-1$. 
This differential operator appears in \cite{GJV2}.
The   Laplace transform of the cut-and-join
equation gives the following formula.

\begin{thm}[\cite{MZ}]
\label{thm:intro-main}
Linear Hodge integrals satisfy  recursion relations given as
 a series of 
equations of symmetric polynomials in 
$\ell$ variables $t_1,\dots,t_\ell$:
\begin{multline}
\label{eq:main}
\sum_{n_L}\la \tau_{n_L}\Lam_g ^\vee(1)\ra_{g,\ell}
\left(
 (2g-2+\ell)\hxi_{n_L}(t_L)
+
\sum_{i=1} ^\ell
\frac{1}{t_i}\;
 \hxi_{n_i+1}(t_i)\hxi_{L\setminus\{i\}}(
t_{L\setminus\{i\}})
\right)
\\
=
\sum_{i<j}
\sum_{m,n_{L\setminus\{i,j\}}}
\la \tau_m\tau_{n_{L\setminus\{i,j\}}}
\Lam_{g} ^\vee (1)\ra_{g,\ell-1}
\hxi_{n_{L\setminus\{i,j\}}}(t_{L\setminus\{i,j\}})
\frac{\hxi_{m+1}(t_i) \hxi_0(t_j) t_i ^2 
-
\hxi_{m+1}(t_j) \hxi_0(t_i) t_j ^2}{t_i-t_j}
\\
+
\half\sum_{i=1}^\ell
\sum_{n_{L\setminus\{i\}}}\sum_{a,b}
\Bigg(
\la \tau_a\tau_b\tau_{n_{L\setminus\{i\}}}\Lam_{g-1} ^\vee
(1)\ra_{g-1,\ell+1}\\
+
\sum_{\substack{g_1+g_2 = g\\
I\sqcup J = L\setminus\{i\}}} ^{\rm{stable}}
\la \tau_a\tau_{n_{I}}\Lam_{g_1} ^\vee
(1)\ra_{g_1,|I|+1}
\la \tau_b\tau_{n_{J}}\Lam_{g_2} ^\vee
(1)\ra_{g_2,|J|+1}
\Bigg)
\hxi_{a+1}(t_i)\hxi_{b+1}(t_i)\hxi_{n_{L\setminus\{i\}}}(
t_{L\setminus\{i\}}),
\end{multline}
where $L=\{1,2\dots,\ell\}$ is an index set, and for a subset 
$I\subset L$, we denote
$$
t_I = (t_i)_{i\in I},\quad
	 n_I = \{\, n_i\,|\, i\in I\,\},\quad 
	\tau_{n_I} = \prod_{i\in I}\tau_{n_i},
	\quad
		\hxi_{n_I}(t_I) = \prod_{i\in I}
		\hxi_{n_i}(t_i).
$$
The last summation in the formula
 is taken over all partitions of $g$ and 
decompositions  of $L$ into 
disjoint subsets $I\sqcup J=  L$ subject to the stability 
condition $2g_1 - 1 + |I| >0$ and
$2 g_2 -1 +|J| >0$.
\end{thm}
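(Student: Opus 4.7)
The plan is to start from the Goulden--Jackson--Vakil cut-and-join equation for $h_{g,\mu}$, substitute the ELSV formula \eqref{eq:ELSV} to convert Hurwitz numbers into linear Hodge integrals, and then apply the Laplace transform in the partition variables $\mu_1,\dots,\mu_\ell$. The key change of variables is dictated by the inverse of the Lambert $W$-function: write $e^{-w} = y e^{-y}$ and then set $t = 1/(1-y)$. A direct computation shows that under this substitution $-d/dw$ acts as the operator $t^2(t-1)\,d/dt$, which is precisely the recursion operator producing the polynomials $\hxi_n(t)$. Lagrange inversion then gives the basic Laplace identity
\[
\sum_{\mu\geq 1}\frac{\mu^{\mu+n-1}}{\mu!}\,e^{-\mu w}\;=\;\hxi_n(t)\cdot(\text{standard normalization}),
\]
so that expanding $1/(1-\mu_i\psi_i) = \sum_{n_i\geq 0}\mu_i^{n_i}\psi_i^{n_i}$ in ELSV and Laplace-transforming each factor $\mu_i^{n_i+\mu_i}/\mu_i!$ yields the $\hxi_{n_i}(t_i)$ building blocks of \eqref{eq:main}.

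\textbf{Left-hand side.} First I would rewrite the cut-and-join equation as
\[
(2g-2+\ell+|\mu|)\, h_{g,\mu} \;=\; [\text{join}]+[\text{cut}],
\]
multiply by $\big((2g-2+\ell+|\mu|-1)!\big)/|\Aut(\mu)|$ so that ELSV applies cleanly, and sum against $\prod_{i=1}^\ell e^{-\mu_i w_i}$. The prefactor splits as $(2g-2+\ell) + |\mu|$: the constant $(2g-2+\ell)$ term directly Laplace-transforms into $(2g-2+\ell)\,\hxi_{n_L}(t_L)$, while the $|\mu| = \sum_i \mu_i$ term acts by multiplication by $\mu_i$ on the $i$-th factor, which is implemented by shifting $n_i \mapsto n_i+1$ and dividing by $t_i$, producing the sum $\sum_i \tfrac{1}{t_i}\hxi_{n_i+1}(t_i)\hxi_{L\setminus\{i\}}(t_{L\setminus\{i\}})$ on the left of \eqref{eq:main}.

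\textbf{Join and cut contributions.} The join term of the cut-and-join equation merges two parts $\mu_i,\mu_j$ into a single part $\mu_i+\mu_j$; its combinatorial weight $\mu_i\mu_j/(\mu_i+\mu_j)$ combined with the twin weights $\mu^\mu/\mu!$ Laplace-transforms, via the identity $1/(\mu_i+\mu_j) = \int_0^\infty e^{-(\mu_i+\mu_j)s}\,ds$ and a partial-fraction step in the $t$-variables, into the symmetric difference quotient
\[
\frac{\hxi_{m+1}(t_i)\hxi_0(t_j)\,t_i^2 - \hxi_{m+1}(t_j)\hxi_0(t_i)\,t_j^2}{t_i-t_j}
\]
appearing on the first line of the right-hand side. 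The cut term splits one part $\mu_i$ into $a+b=\mu_i$; Laplace-transforming in $a$ and $b$ separately yields the product $\hxi_{a+1}(t_i)\hxi_{b+1}(t_i)$ localized at the single variable $t_i$. Passing from disconnected to connected Hurwitz numbers via the exponential relation isolates the reducible contributions, which decompose precisely as the stable splitting $g_1+g_2 = g$, $I\sqcup J = L\setminus\{i\}$; the irreducible remainder produces the $\tau_a\tau_b\Lambda^\vee_{g-1}$ term.

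\textbf{Main obstacle.} The hard part is bookkeeping: one must consistently track the factorial from $(2g-2+\ell+|\mu|)!$ in ELSV, the automorphism factor $|\Aut(\mu)|$, and the symmetrization required to go from ordered partitions back to unordered ones under the Laplace transform. The structural algebraic step that makes everything collapse is the polynomial identity relating $\hxi_{m+1}(t_i)\hxi_0(t_j)\,t_i^2$ to the Laplace transform of the $(\mu_i+\mu_j)$-weighted convolution, producing the characteristic antisymmetrized difference quotient. Finally, carefully separating the unstable cases $(g,\ell)=(0,1),(0,2)$ from the stable range, and checking that the boundary terms match exactly under the stability restriction in the disjoint-union sum, completes the argument.
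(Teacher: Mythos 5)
Your overall strategy --- Laplace-transforming the cut-and-join equation after substituting the ELSV formula, with the Lambert-curve coordinate $t$ and the operator $t^2(t-1)\frac{d}{dt}$ generating the $\hxi_n$ --- is exactly the paper's route (Propositions~\ref{prop:ell=1LT}, \ref{prop:H02} and Theorem~\ref{thm:CAJLT}). However, two of the steps you describe would fail as written, and they are precisely the delicate points of the computation. First, multiplication by $\mu_i$ under the Laplace transform only shifts $n_i\mapsto n_i+1$, giving $\hxi_{n_i+1}(t_i)$ with \emph{no} factor of $1/t_i$; the factor $\frac{1}{t_i}$ in (\ref{eq:main}) is not produced by the $|\mu|$ term alone. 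It arises because the join/splitting sum on the right contains the unstable $(g,\ell)=(0,1)$ contributions $H_0(\alpha)H_g(\beta)+H_g(\alpha)H_0(\beta)$ with $H_0(\alpha)=\alpha^{\alpha-2}/\alpha!$, whose Laplace transform is $\hxi_{-1}(t_i)\,\hxi_{n_i+1}(t_i)$; moving these to the left and using $1-\hxi_{-1}(t)=1/t$ is what yields $\frac{1}{t_i}\hxi_{n_i+1}(t_i)$. Your plan to "separate the unstable cases at the end" misses that the $(0,1)$ terms must be absorbed into the left-hand side to produce that coefficient at all.

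Second, your description of the join term is not the one in the cut-and-join equation. The join contribution is $(\mu_i+\mu_j)\,H_g\big(\mu(\hat i,\hat j),\mu_i+\mu_j\big)$: a \emph{single} ELSV weight $\frac{(\mu_i+\mu_j)^{\mu_i+\mu_j+m}}{(\mu_i+\mu_j)!}$ attached to the merged part, summed over the convolution $\mu_i+\mu_j=k$. The difference quotient in (\ref{eq:main}) comes from Laplace-transforming that convolution, $\sum_{\mu_i+\mu_j=k}x_i^{\mu_i}x_j^{\mu_j}=\frac{x_ix_j(x_i^{k-1}-x_j^{k-1})}{x_i-x_j}$, and then converting $\frac{1}{x_i-x_j}$ to the $t$-coordinates --- which is exactly the content of the $(0,2)$ computation (\ref{eq:H02LT}), since $\hxi_{-1}(t_i)-\hxi_{-1}(t_j)=\frac{t_i-t_j}{t_it_j}$. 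The weight $\frac{\mu_i\mu_j}{\mu_i+\mu_j}\cdot\frac{\mu_i^{\mu_i}}{\mu_i!}\frac{\mu_j^{\mu_j}}{\mu_j!}$ you invoke, together with the integral representation of $1/(\mu_i+\mu_j)$, is the unstable generating function $\hatH_{0,2}$ itself, not the join term; conflating the two would not reproduce the numerators $\hxi_{m+1}(t_i)\hxi_0(t_j)t_i^2$. (A minor further slip: the basic Laplace identity should read $\sum_{\mu\ge1}\frac{\mu^{\mu+n}}{\mu!}e^{-\mu(w+1)}=\hxi_n(t)$, with exponent $\mu+n$, not $\mu+n-1$.)
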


\begin{rem}
We note a similarity of the above formula
 and the Mirzakhani
recursion formula for the Weil-Petersson volume of the
moduli space of bordered hyperbolic surfaces of genus $g$
with $\ell$ closed geodesic boundaries \cite{Mir1, Mir2}.
\begin{enumerate}

\item
There is no a priori reason for the 
Laplace transform to be
a \emph{polynomial} equation.

\item 
The above formula  is a \emph{topological
recursion}. 
For an algebraic curve of
 genus $g\ge 0$ and $\ell \ge 1$  distinct
marked points on it, the absolute value of the 
Euler characteristic  of the $\ell$-punctured 
Riemann surface, $2g - 2 +\ell$,  defines a \emph{complexity} of 
the moduli space $\overline{\cM}_{g,\ell}$. 
Eqn.(\ref{eq:main}) gives an effective
method of calculating the linear Hodge integrals of
complexity $n>0$ from those with complexity
$n-1$. 

\item
When we restrict (\ref{eq:main}) to the 
homogeneous highest degree terms, 
the equation reduces to the  Witten-Kontsevich
theorem of
 $\psi$-class intersections \cite{DVV,K1992,W1991}.

\end{enumerate}
\end{rem}

Let us explain the background of our work.
Independent of the recent geometric and combinatorial works
\cite{GJ,Mir1,Mir2,V}, a theory of topological 
recursions has been developed in the matrix model/random 
matrix theory community \cite{E2004, EO1}. Its 
culmination is the  topological
 recursion formula established in
\cite{EO1}. There are three ingredients in this theory:
the Cauchy differentiation kernel (which is referred to
as  ``the Bergman
Kernel'' in \cite{BM, EO1}) of an
 analytic curve $C\subset \bC ^2$ 
 in the $xy$-plane called a \emph{spectral
curve},
 the standard holomorphic symplectic 
structure on $\mathbb{C}^2$, 
and the ramification 
behavior of the projection
$\pi:C\rightarrow \bC$ of the spectral curve to the $x$-axis. 
When $C$ is  hyperelliptic whose ramification 
points are all real, the topological recursion  solves 
$1$-Hermitian matrix models for the potential function
that determines the spectral curve. It  means  that the 
formula recursively 
computes all $n$-point correlation functions
of the resolvent of  random Hermitian matrices of an arbitrary size. 
By   choosing  a particular
 spectral curve of  genus $0$, the topological
recursion \cite{E2007, EO1, EO2} recovers the Virasoro constraint conditions for
the $\psi$-class intersection numbers 
$\la \tau_{n_1} \cdots \tau_{n_\ell}\ra$ due to Witten \cite{W1991}
and Kontsevich \cite{K1992}, and 
the  mixed intersection numbers
$\la \kappa_1 ^{m_1} \kappa_2 ^{m_2} \cdots  \tau_{n_1} \cdots \tau_{n_\ell}\ra$ due to Mulase-Safnuk \cite{MS} and
Liu-Xu \cite{LX}.
Based on the work by Mari\~no \cite{Mar2006}
and Bouchard, Klemm, Mari\~no and 
Pasquetti 
\cite{BKMP}
on {\em remodeling} the B-model
topological string theory on the mirror curve of a
toric Calabi-Yau $3$-fold, 
Bouchard and Mari\~no \cite {BM} conjecture that when one uses the \emph{Lambert curve}
\begin{equation}
\label{eq:Lambert-xy}
C = \{(x,y)\;|\; x = y e^{-y}\} \subset \bC ^* \times \bC ^*
\end{equation}
as the spectral curve, 
the   topological  recursion formula of Eynard and Orantin
should compute the generating functions 
\begin{equation}
\begin{aligned}
\label{eq:Hgell}
H_{g,\ell} (x_1,\dots,x_\ell) 
&= \sum_{\mu : \ell(\mu) = \ell}
\frac{\mu_1\mu_2\cdots\mu_\ell}{(2g - 2 +\ell +|\mu|)!}\; h_{g,\mu}
\sum_{\sigma\in S_\ell}\prod_{i=1} ^\ell 
x_{\sigma(i)}  ^{\mu_i -1}
\\
&=
 \sum_{n_1+\cdots+n_\ell \leq 3g-3+\ell} \la \tau_{n_1}\cdots \tau_{n_\ell}\Lambda_g^{\vee}(1)\ra\,\,\prod_{i=1}^{\ell}\sum_{\mu_i=1}^{\infty}\frac{\mu_i^{\mu_i+1+n_i}}{\mu_i!}x_i^{\mu_i-1}
\end{aligned}
\end{equation}
of Hurwitz numbers 
 for all $g\ge 0$ and $\ell>0$.
Here the sum in the first line 
is taken over all partitions $\mu$ of  length 
$\ell$, and $S_\ell$ is the symmetric group of $\ell$ letters.

\begin{figure}[htb]
\centerline{\epsfig{file=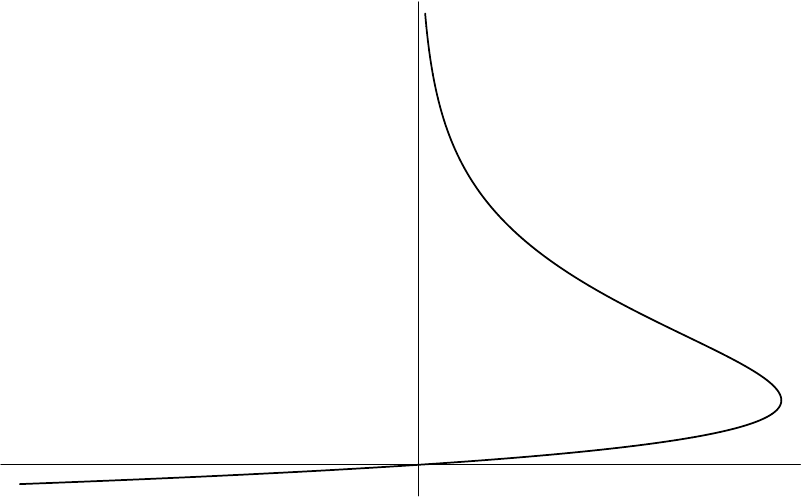, width=2in}}
\caption{The Lambert curve $C\subset \bC ^* \times \bC ^*$ 
defined by  $x=y e^{-y}$.}
\label{fig:curve}
\end{figure}

Our discovery of this paper is that the Laplace transform
of the combinatorics, the cut-and-join equation in our case, explains
the role of the Lambert curve, the ramification behavior of the 
projection
$\pi:C\rightarrow \bC^*$, the Cauchy differentiation 
kernel  on $C$,
and residue calculations
that appear in the theory of topological recursion. 
As a consequence of this explanation, we obtain 
a proof of the Bouchard-Mari\~no conjecture \cite{BM}.
For this purpose, it is essential to use
 a different parametrization of the Lambert curve:
 $$
x = e^{-(w+1)}\qquad{\text{and}}\qquad
y=\frac{t-1}{t} .
$$
 The coordinate $w$ is the
parameter  of the Laplace transformation, which changes
a function 
 in positive integers to a complex analytic function in $w$. 
Recall the Stirling expansion 
$$
e^{-k}\frac{k^{k+n}}{k!}\sim \frac{1}{\sqrt{2\pi}}\;k^{n-\half},
$$
which makes its Laplace transform a function of $\sqrt{w}$. 
Note that the $x$-projection $\pi$ of the 
Lambert curve (\ref{eq:Lambert-xy}) is locally a double-sheeted covering
around its unique critical point $(x=e^{-1},y=1)$.
Therefore, the Laplace transform of the ELSV formula
(\ref{eq:ELSV})  naturally lives on
the  Lambert curve $C$ rather than on the $w$-coordinate plane.
Note that $C$ is an analytic curve of genus $0$ and $t$ is
its global coordinate.  The point at infinity $t=\infty$
is the ramification point of the projection $\pi$.   
In terms of these coordinates, the Laplace transform 
of the ELSV formula becomes a \emph{polynomial} in $t$-variables.

The proof of the Bouchard-Mari\~no conjecture 
 is established as
follows. 
A topological recursion of \cite{EO1} is always 
given as a residue formula of symmetric differential forms
 on the spectral curve. The Laplace-transformed cut-and-join equation (\ref{eq:main})
 is an equation among \emph{primitives} of differential
 forms. We first take the exterior differential of
 this equation. 
 We then analyze the role of the residue calculation
 in the theory of topological recursion \cite{BM, EO1},
 and find that it is equivalent to evaluating the
 form at $q\in C$ and its conjugate point $\bar{q}\in C$
 with respect to the local Galois covering $\pi:C\rightarrow \bC$
 near its critical point.
This means all residue calculations are replaced by
 an algebraic operation of taking the direct image
 of the differential form via the projection $\pi$.
 We find that the direct image of (\ref{eq:main})  then 
 becomes identical to the conjectured formula
(\ref{eq:BMalgebraic}).

 \begin{thm}[The Bouchard-Mari\~no Conjecture]
 \label{thm:BMconj}
 The linear Hodge integrals satisfy
 exactly the same 
 topological recursion formula discovered in \cite{EO1}:
 \begin{multline}
\label{eq:BMalgebraic}
\sum_{n,n_L}\la \tau_n\tau_{n_L} \Lambda_g ^\vee (1) \ra_{g,\ell+1}
 d \hxi_n(t) \tensor
d \hxi_{n_L}(t_L)\\
=
\sum_{i=1} ^\ell
\sum_{m,n_{L\setminus\{i\}}}\la \tau_m\tau_{n_{L\setminus\{i\}}} \Lambda_{g} ^\vee (1) \ra_{g,\ell}
P_{m}(t,t_i)
dt\tensor dt_i\tensor
d \hxi_{n_{L\setminus\{i\}}}(t_{L\setminus\{i\}})\\
+
\Bigg(
\sum_{a,b,n_L}\la \tau_a\tau_b\tau_{n_L} \Lambda_{g-1} ^\vee (1) \ra_{g-1,\ell+2}
\\
+
\sum_{\substack{g_1+g_2=g\\
I\sqcup J=L}} ^{\rm{stable}}
\sum_{\substack{a,n_I\\b,n_J}}\la \tau_a\tau_{n_I}
\Lambda_{g_1} ^\vee (1) \ra_{g_1,|I|+1}\la \tau_b\tau_{n_J}
\Lambda_{g_2} ^\vee (1) \ra_{g_2,|J|+1}
\Bigg)
P_{a,b}(t)dt\tensor  d \hxi_{n_L}(t_L),
\end{multline}
where
$$
	d\hxi_{n_I}(t_I) = \bigotimes_{i\in I}
		\frac{d}{dt_i}\hxi_{n_i}(t_i)dt_i.
		$$
The functions $P_{a,b}(t)$ and $P_n(t,t_i)$ 
  are defined by 
taking the \emph{polynomial} part of the expressions
\begin{align*}
&P_{a,b}(t)dt =\half \left[ \frac{ts(t)}{t-s(t)}\;
\frac{dt}{t^2(t-1)}\Big(
\hat{\xi}_{a+1}(t)\hat{\xi}_{b+1}\big(s(t)\big)
+\hat{\xi}_{a+1}\big(s(t)\big)\hat{\xi}_{b+1}(t)
\Big)
\right]_+,\\
&P_n(t,t_i) dt\tensor dt_i=d_{t_i}\left[\frac{ts(t)}{t-s(t)}
\left(\frac{\hat{\xi}_{n+1} (t)ds(t)}{s(t)-t_i} 
+ \frac{\hat{\xi}_{n+1}\big(s(t)\big)dt}
{t-t_i}\right)\right]_+,
\end{align*}
where $s(t)$ is the deck-tranformation of the
projection $\pi:C\rightarrow \bC^*$ around 
its critical point $\infty$. 
\end{thm}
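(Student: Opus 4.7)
The plan is to derive Theorem \ref{thm:BMconj} from Theorem \ref{thm:intro-main} by three successive operations on the identity (\ref{eq:main}): differentiation to pass from scalar polynomials to meromorphic symmetric differentials, reinterpretation of the residue calculus of \cite{EO1} on the Lambert curve as an algebraic direct image, and term-by-term matching against (\ref{eq:BMalgebraic}).

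First I would take the total exterior differential of (\ref{eq:main}) in all the variables $t_1,\dots,t_\ell$ together with an auxiliary variable $t$. Since every $\hxi_n$ is a polynomial, this converts the scalar identity into a genuine identity of meromorphic differentials on $C^{\ell+1}$. The left-hand factor $(2g-2+\ell)\hxi_{n_L}(t_L)$ is the natural carrier of the additional variable $t$, and after exploiting the defining recursion $\hxi_{n+1}(t) = t^2(t-1)\tfrac{d}{dt}\hxi_n(t)$ to trade $\hxi_{n_i+1}(t_i)/t_i$ for $d\hxi_{n_i}(t_i)$, the left-hand side becomes immediately recognizable as the expression $\sum_{n,n_L}\la\tau_n\tau_{n_L}\Lam_g^\vee(1)\ra\,d\hxi_n(t)\tensor d\hxi_{n_L}(t_L)$ appearing on the left of (\ref{eq:BMalgebraic}).

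Second, and this is the crucial conceptual step, I would unpack the residue operator of the Eynard--Orantin recursion on the Lambert curve. The projection $\pi:C\to \bC^*$ is locally a double cover near its unique critical point (at $t=\infty$ in the parametrization $y=(t-1)/t$), with nontrivial deck transformation $s(t)$. A direct rational-function computation should show that, for any rational differential on $C\simeq\bP^1$, the residue of \cite{EO1} at this critical point is the composition of the symmetrization operator $f\mapsto \half\bigl(f(t)+f(s(t))\bigr)$ (the direct image $\pi_*$) and the polynomial-part projection $[\,\cdot\,]_+$. Under this interpretation the kernels $P_{a,b}(t)$ and $P_n(t,t_i)$ are exactly the images, under $\pi_*$ followed by $[\,\cdot\,]_+$, of $\hxi_{a+1}(t)\hxi_{b+1}(t)$ and $\hxi_{n+1}(t)$ weighted against the Bergman-type kernel $\tfrac{ts(t)}{t-s(t)}\tfrac{dt}{t^2(t-1)}$ and the Cauchy kernel $dt\tensor dt_i/(t-t_i)^2$, respectively.

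Third, I would apply $\pi_*$ (symmetrization $t\leftrightarrow s(t)$) followed by $[\,\cdot\,]_+$ to the differentiated form of (\ref{eq:main}) and compare with (\ref{eq:BMalgebraic}). The diagonal cut contribution, which carries $\hxi_{a+1}(t_i)\hxi_{b+1}(t_i)$, becomes the symmetric combination $\hxi_{a+1}(t)\hxi_{b+1}(s(t))+\hxi_{a+1}(s(t))\hxi_{b+1}(t)$ weighted by the Bergman-type kernel, and its polynomial part is $P_{a,b}(t)\,dt$ by inspection of the theorem's definitions. The off-diagonal join contribution, thanks to the antisymmetric difference quotient $\bigl(\hxi_{m+1}(t_i)\hxi_0(t_j)t_i^2 - \hxi_{m+1}(t_j)\hxi_0(t_i)t_j^2\bigr)/(t_i-t_j)$ in (\ref{eq:main}), reassembles after $d_{t_i}$ and $\pi_*$ into precisely $P_m(t,t_i)\,dt\tensor dt_i$. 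The principal obstacle lies in the second step: one must show rigorously that the analytic residue operator of \cite{EO1} coincides with $\pi_*\circ[\,\cdot\,]_+$ on all rational data, which requires a careful local analysis of $s(t)$ near $t=\infty$ from the transcendental identity $\pi(s(t))=\pi(t)$ and a verification that the singular parts of the symmetrized kernels cancel so that only a polynomial survives. Once this bridge between the combinatorics of cut-and-join and the symplectic geometry of the spectral curve is built, the term-by-term matching in Step 3 becomes essentially a symbolic computation.
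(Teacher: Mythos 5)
Your overall architecture --- differentiate the Laplace-transformed cut-and-join equation, replace the residue calculus of \cite{EO1} by the direct image under $\pi$ followed by the polynomial-part projection, then match terms --- is the same as the paper's, but two of your steps contain genuine gaps. Step 1 is wrong as stated: there is no ``auxiliary variable $t$'' in (\ref{eq:BMalgebraic}). The variable $t$ is one of the existing variables of (\ref{eq:main}) (one compares (\ref{eq:main}) in $\ell+1$ variables with (\ref{eq:BMalgebraic}), singling out $t=t_1$), and the direct image is applied only in that variable. Consequently differentiation alone does not turn the left side of (\ref{eq:main}) into $\sum\la\tau_n\tau_{n_L}\ra\, d\hxi_n(t)\tensor d\hxi_{n_L}(t_L)$: writing $\frac{1}{t_i}=1-\hxi_{-1}(t_i)$, the pieces $(2g-2+\ell)\hxi_{n_L}(t_L)$ and $\sum_i\hxi_{n_i+1}(t_i)\hxi_{n_{L\setminus\{i\}}}(t_{L\setminus\{i\}})$ survive differentiation and must instead be annihilated by the symmetrization, because each is an \emph{odd} function of $v_1$ under the involution $v\mapsto -v$, $t\mapsto s(t)$; only the piece $-\hxi_{-1}(t_1)\hxi_{n_1+1}(t_1)\hxi_{n_{L\setminus\{1\}}}$ is even in $v_1$ and survives. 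This cancellation is the actual mechanism of the paper's Section~\ref{sect:proof} and is invisible in your Step 1.

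More seriously, the bridge you yourself flag as the ``principal obstacle'' cannot be built by ``local analysis of $s(t)$'' alone, and without it the proposal has the right shape but no engine. The paper's proof rests on Proposition~\ref{prop:LT=eta}: the Laurent series $\eta_n(v)$ assembled from the Stirling coefficients $s_k$ satisfies $\eta_n(v)=\hxi_n(t)+F_n(w)$ and $\eta_n(-v)=\hxi_n\big(s(t)\big)+F_n(w)$ with $F_n$ holomorphic in $w=\half v^2$, proved by an Euler--Maclaurin comparison of the discrete Laplace transform with $\int_0^\infty e^{-s}\frac{s^{s+n}}{\Gamma(s+1)}e^{-sw}\,ds$. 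This is precisely what justifies (i) that after the direct image only products $\eta_a(v_1)\eta_b(v_1)$ contribute to the principal part, and (ii) that the symmetrized products reduce to the stated polynomial parts $P_{a,b}$ (Corollary~\ref{cor:Pab}). Likewise, your claim that the join terms ``reassemble into precisely $P_m(t,t_i)$'' conceals the unstable $(0,2)$ contribution (\ref{eq:H02LT}), the identity $\half\big(\frac{1}{\eta_{-1}(v_1)-\eta_{-1}(v_j)}+\frac{1}{-\eta_{-1}(v_1)-\eta_{-1}(v_j)}\big)$ collapsing to $\frac{1}{v_1^2-v_j^2}$ modulo holomorphic terms, and Proposition~\ref{prop:Pn}, whose proof needs the lemma that $\frac{dv}{v-v_j}-\frac{dt}{t-t_j}$ is holomorphic near the diagonal with no constant term in its $1/t_j$-expansion. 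Finally, your identification of the residue with $\pi_*\circ[\,\cdot\,]_+$ (needed only to link (\ref{eq:BMalgebraic}) back to the residue form (\ref{eq:recursion})) is asserted rather than proved; the paper obtains it in Theorem~\ref{thm:residuecalculation} by splitting $\gamma_\infty$ into $\gamma_\infty-\gamma_{[0,1]}$ and $\gamma_{[0,1]}$ and bounding the latter by $O(t^{-1})$, an argument your sketch does not supply.
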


\noindent
The relation between the cut-and-join formula, 
(\ref{eq:main}) and 
(\ref{eq:BMalgebraic}) is the following:
\begin{equation*}
\begin{CD}
{\substack{\text{Cut-and-Join}\\
{\text{Equation}}}}
@>>{\text{Laplace Transform}}>
{\substack{\text{Polynomial Equation}\\
{\text{on Primitives (\ref{eq:main})}}}}
@>>{\text{Direct Image}}>
{\substack{\text{Topological Recursion on}\\
{\text{Differential Forms (\ref{eq:BMalgebraic})}}}}
\\
@VVV @VVV @VVV
\\
\{{\text{Partitions}}\} @>>>
{\text{Lambert Curve}}
@>{\text{Galois Cover}}>>\bC
\end{CD}
\end{equation*}

Mathematics of the 
topological recursion  and its
geometric realization 
includes still many more mysteries
\cite{BKMP, DV2007, EO1, Mar2006}. 
Among them is a relation to integrable systems such as
the Kadomtsev-Petviashvili equations \cite{EO1}. 
In recent years these equations have played
 an essential role in the study of Hurwitz numbers
 \cite{Kazarian, KazarianLando, MirMor, O2000, OP1, OP2}. 
 Since the aim of this paper is to give a proof of the 
 Bouchard-Mari\~no conjecture and to give a 
 geometric interpretation of the topological 
 recursion for the Hurwitz case, we do not address 
 this relation  here. Since we relate the nature of the topological 
 recursion and combinatorics  
 by the Laplace transform, it is reasonable to ask:
 what is the \emph{inverse} Laplace transform of 
 the topological recursion in general? This  question
 relates the Laplace transformation and the mirror symmetry.
 These are interesting topics to be further explored.

 It is possible to prove the Bouchard-Mari\~no formula
 without appealing to the cut-and-join equation. Indeed, 
 a matrix integral expression of the generating 
 function of Hurwitz numbers  has been  recently discovered in
 \cite{BEMS}, and its spectral curve is identified 
 as the Lambert curve.  As a consequence, 
 the symplectic 
 invariant theory of matrix models  \cite{E2004, E2008} 
 is directly applicable to Hurwitz theory. The discovery of
 \cite{BEMS} is that the derivatives of the 
 symplectic invariants of the
 Lambert curve give $H_{g,\ell}(x_1,\dots,x_\ell)$ of (\ref{eq:Hgell}).
The topological recursion 
 formula of Bouchard and Mari\~no
 then automatically follows.  A deeper understanding of 
 the interplay between these two
 totally different techniques
  is desirable.

 Although our statement is simple and clear, 
 technical details are quite involved.
 We have decided to provide all key details in this paper,
believing that some of the analysis may be useful
 for further study of more general topological recursions.
 This explains the length of the current paper
 in  the sections dealing with complex analysis
 and the Laplace transforms.

The paper is organized as follows. We start with 
identifying the generating function (\ref{eq:Hgell})
as the Laplace transform of the ELSV formula (\ref{eq:ELSV}) in
Section~\ref{sect:ELSV}. We then calculate the 
Laplace transform of the cut-and-join equation
in Section~\ref{sect:CAJ} following
\cite{MZ}, and present the key idea of the proof of
Theorem~\ref{thm:intro-main}. 
In
Section~\ref{sect:BM}
we give the statement of the 
Bouchard and Mari\~no conjecture \cite{BM}. 
We calculate the residues appearing in the 
topological recursion formula in Section~\ref{sect:residue}
for the case of Hurwitz generating functions. The topological
recursion becomes the algebraic relation as presented 
in Theorem~\ref{thm:BMconj}. 
  In 
Section~\ref{sect:Laplace} we prove technical statements
necessary for reducing (\ref{eq:main}) to (\ref{eq:BMalgebraic})
as a Galois average.
The final 
Section~\ref{sect:proof} is devoted to proving
the Bouchard-Mari\~no conjecture.

As an effective recursion, (\ref{eq:main}) and 
(\ref{eq:BMalgebraic}) calculate
linear Hodge integrals, and hence Hurwitz numbers
through the ELSV formula.
A computation is performed by  Michael Reinhard,
an undergraduate
student of UC Berkeley. 
We reproduce some  of his tables
at the end of the paper.

\begin{ack}
Our special thanks are due to 
Vincent Bouchard 
 for numerous discussions and tireless explanations of the
 recursion formulas and the \emph{Remodeling} theory. We thank
 the Institute for the Physics 
and Mathematics of the Universe,
the Osaka City University Advanced Mathematical 
Institute, and the American Institute
of Mathematics for their hospitality during our stay and for
providing us with the opportunity 
of collaboration. 
Without their assistance, this collaboration would have
never been started. We also thank Ga\"etan Borot, 
 Yon-Seo Kim, 
Chiu-Chu Melissa  Liu, Kefeng Liu, 
Marcos Mari\~no, Nicolas Orantin, and Hao Xu
for discussions, and Michael Reinhard for his permission 
to reproduce his computation of linear Hodge integrals and
Hurwitz numbers in this paper. 
During the period of preparation of this work,
B.E.'s research was supported in part 
by  the ANR project Grandes Matrices Al\'eatoires 
ANR-08-BLAN-0311-01,
the European Science Foundation through the Misgam program,
and the Quebec government with the FQRNT;
M.M.\ received financial support from  the NSF, Kyoto 
University, T\^ohoku University, KIAS in Seoul,
and the University of Salamanca;
and B.S.\ received support from 
IPMU. 
\end{ack}

\section{The Laplace transform of the
ELSV formula}
\label{sect:ELSV}

In this section we calculate the Laplace transform of
the ELSV formula as a function in partitions $\mu$.
The result is a symmetric polynomial on the Lambert curve
(\ref{eq:Lambert-xy}).

 A \emph{Hurwitz cover} is a smooth morphism
$f:X \rightarrow \bP ^1$ of a connected nonsingular 
projective algebraic curve $X$ of genus $g$
 to $\bP ^1$ that has only simple ramifications 
except for the point at infinity $\infty\in \bP ^1$. Let 
$f^{-1}(\infty) = \{p_1,\dots,p_\ell\}$. Then the induced morphism
of the formal completion 
$\hat{f}: \hat{X}_{p_i}\rightarrow 
\hat{\bP} ^1 _\infty$
is given by $z\longmapsto z^{\mu_i}$ with a positive integer
$\mu_i$ in terms of a formal parameter $z$ around $p_i\in X$. 
We rearrange integers $\mu_i$'s so that 
$
\mu = (\mu_1\ge \mu_2 \ge \cdots \ge \mu_\ell >0)
$
is a partition of $\deg f = |\mu| = \mu_1 + \cdots + \mu_\ell$ 
 of length $\ell(\mu) = \ell$. We call $f$ a Hurwitz cover
of genus $g$ and \emph{profile} $\mu$. A holomorphic automorphism
of a Hurwitz cover is an automorphism $\phi$ of $X$ 
that preserves $f$:
\begin{equation*}
		\xymatrix{
		X\ar[dr]_f  \ar[rr]^\phi _\sim &&X\ar[dl]^f\\
		&\bP ^1&.
		}
\end{equation*} 
Two Hurwitz covers $f_1:X_1 \rightarrow \bP ^1$
and $f_2:X_2 \rightarrow \bP ^1$ are topologically equivalent 
if there is a homeomorphism $h:X_1\rightarrow X_2$ such that
\begin{equation*}
		\xymatrix{
		X_1\ar[dr]_{f_1}  \ar[rr]^h  &&X_2\ar[dl]^{f_2}\\
		&\bP ^1&.
		}
\end{equation*} 
The \emph{Hurwitz number} of type $(g,\mu)$ is defined by
\begin{equation*}
h_{g,\mu} = \sum_{[f]} \frac{1}{|\Aut f|},
\end{equation*}
where the sum is taken over  all topologically equivalent classes
of Hurwitz covers of a given genus $g$ and profile $\mu$. 
Although $h_{g,\mu}$ appears to be a rational number,
it is indeed an integer for most of the cases because 
$f$ has usually no non-trivial automorphisms.
The celebrated ELSV formula \cite{ELSV, GV1, OP1} relates 
Hurwitz numbers and linear Hodge integrals 
on the Deligne-Mumford moduli stack 
$\overline{\mathcal{M}}_{g,\ell}$
consisting of stable algebraic curves of genus $g$ with $\ell$ distinct
nonsingular marked points. 
Denote by $\pi_{g,\ell}:\overline{\mathcal{M}}_{g,\ell+1}
\rightarrow \overline{\mathcal{M}}_{g,\ell}$
the natural projection and by $\omega_{\pi_{g,\ell}}$ the relative
dualizing sheaf of the universal curve $\pi_{g,\ell}$. 
The \emph{Hodge} bundle $\bE$ 
on $\overline{\mathcal{M}}_{g,\ell}$
is defined by $\bE = (\pi_{g,\ell})_* \omega_{\pi_{g,\ell}}$,
and  the $\lambda$-classes
are  the Chern classes of the Hodge bundle:
$$
\lambda_i = c_i(\bE)
\in H^{2i}(\overline{\mathcal{M}}_{g,\ell}, \bQ).
$$
Let $\sigma_i:\overline{\mathcal{M}}_{g,\ell}
\rightarrow \overline{\mathcal{M}}_{g,\ell+1}$
be the $i$-th tautological section of $\pi$, and put
$\mathcal{L}_i = \sigma_i ^*(\omega_{\pi_{g,\ell}})$. 
The $\psi$-classes are defined by
$$
\psi_i = c_1(\mathcal{L}_i) \in 
H^{2}(\overline{\mathcal{M}}_{g,\ell}, \bQ).
$$
The ELSV formula then reads
\begin{equation*}
	h_{g,\mu} = \frac{r!}{|\Aut( \mu)|}\;  \prod_{i=1}^{\ell(\mu)}\frac{\mu_i ^{\mu_i}}{\mu_i!}\int_{\overline{\mathcal{M}}_{g,\ell(\mu)}} \frac{\Lambda_g^{\vee}(1)}{\prod_{i=1}^{\ell(\mu)}\big( 1-\mu_i \psi_i\big)},
\end{equation*}
where $r =r(g,\mu)= 2g-2+\ell (\mu)+|\mu|$ is the number of 
simple ramification points of $f$.

The Deligne-Mumford stack $\overline{\mathcal{M}}_{g,\ell}$
is defined as the moduli space of \emph{stable} curves satisfying the
stability condition
$2-2g-\ell <0$.  However, Hurwitz numbers 
are well defined for \emph{unstable} geometries
 $(g,\ell) = (0,1)$ and $(0,2)$. 
It is an elementary exercise to 
show that
\begin{equation*}
h_{0,k} = k^{k-3}\qquad \text{and}\qquad
h_{0,(\mu_1,\mu_2)} = \frac{(\mu_1+\mu_2)!}{\mu_1+\mu_2}\cdot
\frac{\mu_1^{\mu_1}}{\mu_1!}\cdot \frac{\mu_2^{\mu_2}}{\mu_2!}.
\end{equation*}
The ELSV formula remains true for unstable cases
by \emph{defining}
\begin{align}
\label{eq:01Hodge}
&\int_{\overline{\cM}_{0,1}} \frac{\Lambda_0 ^\vee (1)}{1-k\psi}
=\frac{1}{k^2},\\
\label{eq:02Hodge}
&\int_{\overline{\cM}_{0,2}} 
\frac{\Lambda_0 ^\vee (1)}{(1-\mu_1\psi_1)(1-\mu_2\psi_2)}
=\frac{1}{\mu_1+\mu_2}.
\end{align}

Now fix an $\ell\ge 1$, and consider a partition
$\mu$ of length $\ell$ as an $\ell$-dimensional vector
$$
\mu = (\mu_1,\dots,\mu_\ell)\in \bN^\ell
$$
consisting with positive integers. 
We define
\begin{multline}
\label{eq:Hg(mu)}
H_g(\mu)= \frac{|\Aut(\mu)|}{r(g,\mu)!}\cdot h_{g,\mu}
\\
=
  \prod_{i=1}^{\ell}\frac{\mu_i ^{\mu_i}}{\mu_i!}\int_{\overline{\mathcal{M}}_{g,\ell}} \frac{\Lambda_g^{\vee}(1)}{\prod_{i=1}^{\ell}\big( 1-\mu_i \psi_i\big)}
=
\sum_{n_1+\cdots+n_\ell \le 3g-3+\ell} \la \tau_{n_1}\cdots \tau_{n_\ell}\Lambda_g^{\vee}(1)\ra\,\,\prod_{i=1}^{\ell}\frac{\mu_i^{\mu_i+n_i}}{\mu_i!}
\end{multline}
as  a function in $\mu$.
It's Laplace transform
\begin{equation}
\label{eq:Hgell-in-w}
\cH_{g,\ell}(w_1,\dots,w_\ell)
=
\sum_{\mu\in\bN^\ell} H_g(\mu)
e^{-\left(
\mu_1(w_1+1)+\cdots+\mu_\ell(w_\ell+1)
\right)}
\end{equation}
is the function we consider in this paper.
We note that the automorphism group 
$\Aut (\mu)$ acts trivially on the function
$e^{-\left(
\mu_1(w_1+1)+\cdots+\mu_\ell(w_\ell+1)
\right)}$, which explains its appearance in (\ref{eq:Hg(mu)}).
Since
the coordinate change
$x = e^{-(w+1)}$
identifies
\begin{equation}
\label{eq:H-equality}
\frac{\partial ^\ell}{\partial x_1\cdots \partial x_\ell}
\cH_{g,\ell}\big(w(x_1),\dots,w(x_\ell)\big)
=H_{g,\ell}(x_1,\dots,x_\ell),
\end{equation}
the Laplace transform (\ref{eq:Hgell-in-w}) is a 
\emph{primitive} of the generating function (\ref{eq:Hgell}).

Before performing the exact calculation 
of the holomorphic function
 $\cH_{g,\ell}(w_1,\dots,w_\ell)$, let us
 make a quick estimate here. 
 From Stirling's formula
$$
e^{-k}\frac{k^{k+n}}{k!}\sim \frac{1}{\sqrt{2\pi}}\;k^{n-\half},
$$
  it is obvious that 
$\cH_{g,\ell}(w_1,\dots,w_\ell)$ is  holomorphic on
$Re(w_i) >0$ for all $i=1,\dots,\ell$.  
Because of the  half-integer powers of $\mu_i$'s, 
 the Laplace transform 
$\cH_{g,\ell}(w_1,\dots,w_\ell)$
 is expected to be
  a meromorphic function on a double-sheeted covering
 of the $w_i$-planes.
Such a double covering is provided by the Lambert curve
$C$ of (\ref{eq:Lambert-xy}). 
So we define
\begin{equation}
\label{eq:t-in-w}
t= 1+
\sum_{k=1} ^\infty \frac{k^{k}}{k!}
e^{-k(w+1)},
\end{equation}
which gives a global coordinate of $C$.
The summation converges for $Re(w)>0$, and
the Lambert curve 
is expressed in terms of $w$ and $t$ 
coordinates as
\begin{equation}
\label{eq:Lambert-tw}
e^{-(w+1)} = 
\left(1-\frac{1}{t}\right) e^{-\left(1-\frac{1}{t}\right)} .
\end{equation}
The $w$-projection $\pi:C\rightarrow \bC$ is 
locally a double-sheeted covering at $t=\infty$. 
The inverse function of (\ref{eq:t-in-w}) is given by
\begin{equation}
\label{eq:w-in-t}
 w =w(t)= -\frac{1}{t} - \log \left(1-\frac{1}{t}\right)
=\sum_{m=2} ^\infty \frac{1}{m}\;\frac{1}{t^m} ,
\end{equation}
which is holomorphic on $Re(t)>1$. 
When considered as a functional equation, 
(\ref{eq:w-in-t}) has exactly two solutions:
$t$ and
\begin{equation}
\label{eq:s}
s(t) =  -t + \frac{2}{3} + 
\frac{4}{135}t^{-2}+\frac{8}{405} t^{-3}+\frac{8}{567}t^{-4}
+\cdots .
\end{equation}
This is the deck-transformation of 
 the projection $\pi:C\rightarrow \bC$ near $t=\infty$ 
 and satisfies the involution equation
 $s\big(s(t)\big)= t$.
  It is analytic on
 $\bC\setminus [0,1]$ and has logarithmic singularities
 at $0$ and $1$. 
Although $w(t)=w\big(s(t)\big)$,  
$s(t)$ is not given by the Laplace transform (\ref{eq:t-in-w}).

Since the Laplace transform
\begin{equation}
\label{eq:xihat}
\hat{\xi}_n(t)=
\sum_{k=1} ^\infty  \frac{k^{k+n}}{k!}
e^{-k(w+1)}
\end{equation}
 also naturally lives  on $C$, it is a meromorphic
 function  in $t$ rather than in $w$. 
Actually it is a \emph{polynomial}
of degree $2n+1$ for $n\ge 0$
because of the recursion formula
\begin{equation}
\label{eq:hxi-recursion}
\hxi_{n+1}(t) = t^2(t-1)\frac{d}{dt}\;\hxi_n(t) \qquad {\text{for all\;\;}}n\ge 0,
\end{equation}
which follows from (\ref{eq:t-in-w}), (\ref{eq:xihat}), 
and (\ref{eq:w-in-t}) that implies
\begin{equation}
\label{eq:dw}
-dw=\frac{dt}{t^2(t-1)}.
\end{equation}
We note that the differential operator of (\ref{eq:hxi-recursion})
is discovered in \cite{GJV2}.
 For future convenience, 
we define
\begin{equation}
\label{eq:hxi-1}
\hxi_{-1}(t) = \frac{t-1}{t} = y,
\end{equation}
which is indeed the $y$ coordinate of the original 
Lambert curve (\ref{eq:Lambert-xy}). 
We now see that the 
Laplace transform  
\begin{multline}
\label{eq:Hhatgell}
\hatH_{g,\ell}(t_1,\dots,t_\ell) = 
\cH_{g,\ell}\big((w(t_1),\dots,w(t_\ell)\big)
=
\sum_{\mu\in\bN^\ell} H_g(\mu)
e^{-\left(
\mu_1(w_1+1)+\cdots+\mu_\ell(w_\ell+1)
\right)}
\\
=
\sum_{n_1+\cdots+n_\ell \le 3g-3+\ell} \la \tau_{n_1}\cdots \tau_{n_\ell}\Lambda_g^{\vee}(1)\ra\,\,\prod_{i=1}^{\ell}
\hxi_{n_i}(t_i)
\end{multline}
is  a symmetric \emph{polynomial} in the $t$-variables
when $2g-2+\ell>0$.

It has been noted in \cite{BEMS, E2007, E2008, 
EO1, EO2} that the \emph{Airy curve}
$w = \half v^2$  
is a universal object of  the topological recursion 
for the case of a genus $0$ spectral curve  
with only one critical point. Analysis of the Airy curve
provides a good control of the topological 
recursion formula for such cases. 
The Airy curve expression  is also valid
around any non-degenerate critical point of a
general spectral curve.  
To switch to the local Airy curve coordinate,
we define
\begin{equation}
 \label{eq:v(t)}
v= v(t) = t^{-1} + \frac{1}{3}\; t^{-2} +\frac{7}{36}\;t^{-3}
+\frac{73}{540}\; t^{-4} + \frac{1331}{12960}\;t^{-5}+\cdots 
 \end{equation}
 as a function in $t$ that solves
\begin{equation}
\label{eq:tvw}
\half v^2 = w = -\frac{1}{t} - \log \left(1-\frac{1}{t}\right)
= -\frac{1}{s(t)} - \log \left(1-\frac{1}{s(t)}\right)
=\sum_{m=2} ^\infty \frac{1}{m}\;\frac{1}{t^m} .
\end{equation}
Note that we are making  a choice of the branch of the
square root of $w$ 
that is consistent with  (\ref{eq:t-in-w}).
The involution (\ref{eq:s}) becomes simply
\begin{equation}
\label{eq:v(s)}
v(t) = -v\big(s(t)\big).
\end{equation}
The new coordinate $v$ plays a key role later when we
reduce
the Laplace transform of the cut-and-join equation
(\ref{eq:main}) to the Bouchard-Mari\~no
topological recursion (\ref{eq:BMalgebraic}).

\section{The cut-and-join equation and its 
Laplace transform}
\label{sect:CAJ}

In the modern times
the cut-and-join equation for Hurwitz numbers was
discovered in 
\cite{GJ, V}, though it seems to be known to 
Hurwitz \cite{H}.  It  has become an effective
tool for studying algebraic geometry of Hurwitz 
numbers and many related subjects \cite{CLL, GJV1, GJV2,
GJV3, KazarianLando, KL, LZZ,
 LLZ, OP1, Zhou}. 
In this section we calculate the Laplace transform of 
the  cut-and-join 
equation 
following \cite{MZ}.

The simplest way of presenting the cut-and-join
equation is to use a different \emph{primitive}
 of the same generating function 
of Hurwitz numbers (\ref{eq:Hgell}). Let
\begin{equation}
\label{eq:anotherHurwitz}
\mathbf{H}(s,\mathbf{p})=\sum_{g\ge 0}\sum_{\ell\ge 1} \mathbf{H}_{g,\ell}
(s,\mathbf{p});
\qquad
\mathbf{H}_{g,\ell}
(s,\mathbf{p}) =\sum_{\mu: \ell(\mu)=\ell} h_{g,\mu}\mathbf{p}_{\mu} \frac{s^r}{r!},
\end{equation}
where $\mathbf{p}_{\mu} = p_{\mu_1}p_{\mu_2}\cdots p_{\mu_{\ell}}$,
and $r=2g-2+\ell+|\mu|$ is the number of simple ramification 
points on $\bP^1$. The summation is over all partitions of length $\ell$. 
Here $p_k$ is the power-sum symmetric function
\begin{equation}
\label{eq:pk}
p_k = \sum_{i\ge 1} x_i ^k ,
\end{equation}
which is related to the 
monomial symmetric functions by
\begin{equation*}
\frac{\partial ^\ell}{\partial x_1\cdots \partial x_\ell} 
\mathbf{p}_\mu
=\sum_{\sigma\in S_\ell} 
\prod_{i=1} ^\ell 
\mu_i \,x_{\sigma(i)} ^{\mu_i-1} .
\end{equation*}
Therefore, we have
\begin{equation*}
\frac{\partial ^\ell}{\partial x_1\cdots \partial x_\ell} 
\mathbf{H}_{g,\ell}
(1,\mathbf{p})=
H_{g,\ell}(x_1,\dots,x_\ell)
=
\sum_{\mu : \ell(\mu) = \ell}
\frac{\mu_1\mu_2\cdots\mu_\ell}{(2g - 2 +\ell +|\mu|)!}\; h_{g,\mu}
\sum_{\sigma\in S_\ell}\prod_{i=1} ^\ell 
x_{\sigma(i)}  ^{\mu_i -1},
\end{equation*}
which is the generating
function of (\ref{eq:Hgell}). 
Because of the identification  (\ref{eq:H-equality}),
 the primitives 
 $\mathbf{H}_{g,\ell}
(1,\mathbf{p}) $ and $\widehat{\cH}_{g,\ell}
(t_1,\dots,t_\ell)$ 
of (\ref{eq:Hhatgell})
are essentially the same function, 
different only by a constant.

\begin{rem}
Although we do not use the fact here, we note that 
$\mathbf{H}(s,\mathbf{p})$ is a one-parameter 
family of $\tau$-functions of the
KP equations 
with $\frac{1}{k}p_k$ as the KP time variables
\cite{KazarianLando, O2000}. The parameter $s$ 
is the deformation parameter.
 \end{rem}

Let $z\in \bP^1$ be a point at which the covering $f:X\rightarrow \bP^1$
is simply ramified. Locally we can name sheets, so we assume
sheets $a$ and $b$ are ramified over $z$. When we 
merge $z$ to $\infty$, one of the 
two things happen:

\begin{enumerate}
\item The \emph{cut case}. If both sheets are ramified at
  the same point
$x_i$
of the inverse image $f^{-1}(\infty) = \{x_1,\dots, x_\ell\}$, then 
the resulting ramification after merging $z$ to $\infty$ has a profile 
$$
 (\mu_1,\dots,\widehat{\mu_i},\dots,  \mu_{\ell},\alpha, \mu_i-\alpha) = \big(\mu(\hat{i}) , \a, \mu_i-\a\big)
$$
for $1\le \alpha< \mu_i$. 
\item   Otherwise we are in the
\emph{join case}. If sheets $a$ and $b$ are ramified at two distinct
points, say $x_i$ and $x_j$ above $\infty$, then the result of merging 
creates a new profile
$$
  (\mu_1,\dots,\widehat{\mu_i},\dots,\widehat{\mu_j},\dots,\mu_\ell,
\mu_i+\mu_j) 
=\big(\mu(\hat{i},\hat{j}), \mu_i+\mu_j\big).
$$
\end{enumerate}
Here the $\widehat{\;\;}$ sign means removing the entry. 
The above consideration tells us what happens to the generating
function of the Hurwitz numbers when we differentiate it by 
$s$, because it decreases the degree in $s$, or the number
of simple ramification points, by $1$. 
Since the cut case may cause a disconnected covering, let us use
the generating function of Hurwitz numbers allowing 
disconnected curves to cover $\bP^1$. Then the cut-and-join equation
takes the following simple form:
\begin{equation*}
\left[\frac{\partial}{\partial s}-\frac{1}{2}\sum_{\alpha,\beta\ge 1}
\left((\alpha+\beta)p_\alpha p_\beta \frac{\partial}{\partial p_{\alpha+\beta}} + 
\alpha \beta p_{\alpha+\beta}\frac{\partial^2}{\partial p_\alpha\partial p_\beta}\right)\right]
 e^{\mathbf{H}(s,\mathbf{p})} = 0 .
\end{equation*}
It immediately implies
\begin{equation}
\label{eq:cajconnected}
\frac{\partial \mathbf{H}}{\partial s}
=\frac{1}{2}\sum_{\alpha, \beta\ge 1}
\left((\alpha+\beta)p_\alpha p_\beta \frac{\partial \mathbf{H}}{\partial p_{\alpha+\beta}} +
\alpha\beta p_{\alpha+\beta}\frac{\partial^2 \mathbf{H}}{\partial p_\alpha\partial p_\beta}
+ 
\alpha\beta p_{\alpha+\beta}\frac{\partial \mathbf{H}}{\partial p_\alpha}\cdot 
\frac{\partial \mathbf{H}}{\partial p_\beta}\right) ,
\end{equation}
which is the cut-and-join equation for the generating function
$\mathbf{H}(s,\mathbf{p})$ of the number of 
\emph{connected} Hurwitz coverings.

Let us now apply the ELSV formula (\ref{eq:ELSV}) to 
(\ref{eq:anotherHurwitz}). We obtain
\begin{multline}
\label{eq:Hgell-bold}
\mathbf{H}_{g,\ell}(s,\mathbf{p}) = \frac{1}{\ell!}
\sum_{n_L\in\bN^\ell} \la \tau_{n_L}\Lambda^\vee _g(1)\ra_{g,\ell}
s^{2g-2+\ell} \prod_{i=1} ^\ell
\sum_{\mu_i=1} ^\infty \frac{\mu_i ^{\mu_i + n_i}}{\mu_i !}
s^{\mu_i} p_{\mu_i}
\\
= \frac{1}{\ell!}\sum_{(\mu_1,\dots,\mu_\ell)\in \bN^\ell} 
H_g(\mu) \mathbf{p}_\mu s^r  
=\sum_{\mu:\ell(\mu)=\ell} \frac{1}{|\Aut(\mu)|}\,
H_g(\mu) \mathbf{p}_\mu s^r  ,
\end{multline}
where $H_g(\mu)$ is introduced in (\ref{eq:Hg(mu)}). 
Now for every choice of $r\ge 1$ and 
a partition $\mu$, the coefficient of
$\mathbf{p}_\mu s^{r-1}$ of the cut-and-join equation
(\ref{eq:cajconnected}) gives

\begin{thm}[\cite{MZ}]
\label{thm:caj}
The functions $H_g(\mu)$ of {\rm{(\ref{eq:Hg(mu)})}}
satisfy a recursion equation
\begin{multline}
\label{eq:cajcoefficient}
r(g,\mu){H}_g  (\mu)
=
\sum_{i< j}
(\mu_i+\mu_j)
{H}_g \big(\mu(\hat{i},\hat{j}),\mu_i+\mu_j\big)
 \\
 + \frac{1}{2} \sum_{i=1} ^\ell
 \sum_{\alpha+\beta = \mu_i}\alpha\beta 
\left({H}_{g-1} \big(\mu(\hat{i}),\a,\b\big)
+
\sum_{\substack{g_1+g_2 = g\\
\nu_1\sqcup \nu_2 = \mu(\hat{i})}}
{H}_{g_1} (\nu_1,\a)
{H}_{g_2} (\nu_2,\b)
\right).
\end{multline}
\end{thm}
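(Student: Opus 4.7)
The plan is to derive (\ref{eq:cajcoefficient}) as the identity obtained by equating the coefficients of $\mathbf{p}_\mu s^{r-1}$ on the two sides of the cut-and-join equation (\ref{eq:cajconnected}), after substituting the ELSV-based expansion (\ref{eq:Hgell-bold}). Recall that
\[
\mathbf{H}(s,\mathbf{p}) = \sum_{g\ge 0}\sum_{\ell\ge 1}\sum_{\mu:\,\ell(\mu)=\ell}\frac{1}{|\Aut(\mu)|}\,H_g(\mu)\,\mathbf{p}_\mu\, s^{r(g,\mu)}.
\]
Applying $\partial/\partial s$ brings out the factor $r(g,\mu)$, so the coefficient of $\mathbf{p}_\mu s^{r-1}$ on the left of (\ref{eq:cajconnected}) is $r(g,\mu) H_g(\mu)/|\Aut(\mu)|$; multiplying through by $|\Aut(\mu)|$ at the end will reproduce the left side of (\ref{eq:cajcoefficient}).

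\textbf{Extraction from the right-hand side.} I would analyze each of the three bilinear operators in (\ref{eq:cajconnected}) in turn. The \emph{join} operator $(\alpha+\beta)p_\alpha p_\beta\,\partial/\partial p_{\alpha+\beta}$ replaces a single part of size $\alpha+\beta$ in a partition $\nu$ by two new parts of sizes $\alpha$ and $\beta$, so the $\mathbf{p}_\mu$-contributions come exactly from $\nu = \bigl(\mu(\hat i,\hat j),\,\mu_i+\mu_j\bigr)$ for each unordered pair $i<j$ in $L$, with $\alpha=\mu_i$, $\beta=\mu_j$; this produces the first sum in (\ref{eq:cajcoefficient}). The \emph{connected cut} operator $\alpha\beta\, p_{\alpha+\beta}\,\partial^2/\partial p_\alpha\partial p_\beta$ merges two parts $\alpha,\beta$ of $\nu$ into one part $\alpha+\beta$, so $\mathbf{p}_\mu$ arises from $\nu=\bigl(\mu(\hat i),\alpha,\beta\bigr)$ with $\mu_i=\alpha+\beta$; the drop $r(g-1,\nu)=r(g,\mu)-1$ is precisely what matches the degree in $s$, yielding the $H_{g-1}\bigl(\mu(\hat i),\alpha,\beta\bigr)$ term. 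The \emph{disconnected cut} operator $\alpha\beta\,p_{\alpha+\beta}(\partial\mathbf{H}/\partial p_\alpha)(\partial\mathbf{H}/\partial p_\beta)$, which originates from the Leibniz expansion of the quadratic differential operator acting on $e^{\mathbf{H}}$, produces the double sum over genus splittings $g_1+g_2=g$ and partition splittings $\nu_1\sqcup\nu_2=\mu(\hat i)$ with $\alpha$ appended to $\nu_1$ and $\beta$ to $\nu_2$, giving the last term of (\ref{eq:cajcoefficient}).

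\textbf{Main obstacle.} The technical core of the argument is bookkeeping the automorphism factors. The left side of (\ref{eq:cajconnected}) treats the $p_\alpha$'s on equal footing and uses unrestricted sums over $\alpha,\beta\ge 1$, whereas (\ref{eq:cajcoefficient}) is indexed by positions $i$ (and unordered pairs $i<j$) in the fixed partition $\mu$. The conversion between the two requires tracking ratios $|\Aut(\mu)|/|\Aut(\nu)|$ that depend sensitively on which parts of $\nu$ coincide: for the join, the multiplicity of $\mu_i+\mu_j$ as a part of $\bigl(\mu(\hat i,\hat j),\mu_i+\mu_j\bigr)$ affects the combinatorial weight; for the disconnected cut, the ratio $|\Aut(\mu(\hat i))|/\bigl(|\Aut(\nu_1)||\Aut(\nu_2)|\bigr)$ produces a multinomial factor exactly matched by the ordered-versus-unordered redundancy in how $\mu(\hat i)$ can be decomposed as $\nu_1\sqcup\nu_2$. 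The factor $\tfrac12$ in both (\ref{eq:cajconnected}) and (\ref{eq:cajcoefficient}) absorbs the $\alpha\leftrightarrow\beta$ symmetry in the two cut terms. Verifying these balances case by case, particularly when parts collide, is elementary but delicate, and it is here that the proof has its only substance beyond formal coefficient-matching.
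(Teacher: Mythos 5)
Your proposal is correct and follows exactly the route the paper takes: the paper derives Theorem~\ref{thm:caj} by substituting the expansion (\ref{eq:Hgell-bold}) into the cut-and-join equation (\ref{eq:cajconnected}) and reading off the coefficient of $\mathbf{p}_\mu s^{r-1}$, with the three operators matched to the join, connected-cut, and disconnected-cut terms just as you describe. Your additional care with the $|\Aut(\mu)|$ bookkeeping (equivalently, working with ordered tuples and the $1/\ell!$ normalization as in (\ref{eq:Hgell-bold})) supplies detail the paper leaves implicit but introduces nothing different in substance.
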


\begin{rem}
Note that
\begin{align*}
\ell\big(\mu(\hat{i},\hat{j})\big) &= \ell-2\\
\ell(\nu_1) + \ell(\nu_2) &= \ell\big(\mu(\hat{i})\big) =\ell -1 .
\end{align*}
Thus the complexity $2g-2+\ell$ is one less for the coverings
appearing in the RHS of (\ref{eq:cajcoefficient}), which is the effect
of $\partial/\partial s$ applied to $\mathbf{H}(s,\mathbf{p})$,
 \emph{except} for the unstable geometry
corresponding to $g_i=0$ and $|\nu_i|=0$
in the join terms. If we move the $(0,1)$-terms 
to the LHS, then 
 the cut-and-join equation (\ref{eq:cajcoefficient}) becomes 
a topological recursion formula. 
\end{rem}

Let us first calculate the
Laplace transform of the cut-and-join equation
for  the $\ell=1$ case to see what is involved.
We then move on to the more general case later,
following \cite{MZ}.

 \begin{prop}
 \label{prop:ell=1LT}
 The Laplace transform of the cut-and-join equation
 for the $\ell=1$ case 
 gives the following equation:
 \begin{multline}
\label{eq:ell=1LT}
\sum_{n\le 3g-2}\la \tau_n \Lam_g ^\vee (1)\ra_{g,1}
\left[
(2g-1)\hxi_n(t)+\hxi_{n+1}(t)\big(1 -\hxi_{-1}(t)\big)
\right]
\\
=
\half
\sum_{a+b\le 3g-4}
\left[
\la \tau_a\tau_b \Lam_{g-1} ^\vee (1)\ra_{g-1,2}
+\sum_{g_1+g_2=g} ^{\rm{stable}}
\la \tau_a\Lam_{g_1} ^\vee(1) \ra_{g_1,1}
\la \tau_b\Lam_{g_2} ^\vee(1) \ra_{g_2,1}
\right]
\hxi_{a+1}(t)\hxi_{b+1}(t).
\end{multline}
\end{prop}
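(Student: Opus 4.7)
The strategy would be to specialize the cut-and-join recursion of Theorem~\ref{thm:caj} to the case $\ell = 1$ and then apply the Laplace transform with kernel $e^{-\mu(w+1)}$, summed over $\mu \ge 1$. In the $\ell = 1$ case $r(g,\mu) = 2g - 1 + \mu$ and $\mu(\hat{i}) = \emptyset$, so the recursion collapses to
\begin{equation*}
(2g - 1 + \mu)H_g(\mu) = \frac{1}{2}\sum_{\alpha + \beta = \mu}\alpha\beta\biggl[H_{g-1}(\alpha,\beta) + \sum_{g_1 + g_2 = g}H_{g_1}(\alpha)H_{g_2}(\beta)\biggr].
\end{equation*}
The first move is to split the inner join sum into its stable part ($g_1, g_2 \ge 1$) and the two unstable summands $(g_1, g_2) = (0, g)$ and $(g, 0)$, in which $H_0(\alpha)$ is interpreted via the $(0,1)$-convention~(\ref{eq:01Hodge}).

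Next I would apply the Laplace transform. On the LHS, the two identities
\begin{equation*}
\sum_{\mu \ge 1} H_g(\mu)\, e^{-\mu(w+1)} = \sum_n \la \tau_n \Lambda_g^\vee(1)\ra_{g,1}\, \hxi_n(t),\qquad
\sum_{\mu \ge 1}\mu H_g(\mu)\, e^{-\mu(w+1)} = \sum_n \la \tau_n \Lambda_g^\vee(1)\ra_{g,1}\, \hxi_{n+1}(t)
\end{equation*}
follow directly from the definitions~(\ref{eq:Hg(mu)}) and~(\ref{eq:xihat}) and together produce $\sum_n \la \tau_n \Lambda_g^\vee(1)\ra_{g,1}\bigl[(2g - 1)\hxi_n(t) + \hxi_{n+1}(t)\bigr]$. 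On the RHS the key observation is that convolutions weighted by $\alpha\beta$ factorize under Laplace transform:
\begin{equation*}
\sum_\mu \sum_{\alpha + \beta = \mu}\alpha\beta\, f(\alpha) g(\beta)\, e^{-\mu(w+1)} = \Bigl(\sum_\alpha \alpha f(\alpha)\, e^{-\alpha(w+1)}\Bigr)\Bigl(\sum_\beta \beta g(\beta)\, e^{-\beta(w+1)}\Bigr).
\end{equation*}
Applied to the cut term and to each stable join summand, this delivers exactly the combinations $\hxi_{a+1}(t)\hxi_{b+1}(t)$ on the RHS of~(\ref{eq:ell=1LT}). For the two unstable join summands one of the factors reduces to
\begin{equation*}
\sum_{\alpha \ge 1}\alpha H_0(\alpha)\, e^{-\alpha(w+1)} = \sum_{\alpha \ge 1}\frac{\alpha^{\alpha - 1}}{\alpha!}\, e^{-\alpha(w+1)} = \hxi_{-1}(t)
\end{equation*}
by~(\ref{eq:01Hodge}) and~(\ref{eq:hxi-1}), while the other returns $\sum_n \la \tau_n \Lambda_g^\vee(1)\ra_{g,1}\hxi_{n+1}(t)$; after the symmetry factor $\tfrac12 + \tfrac12 = 1$, their joint contribution is $\hxi_{-1}(t)\sum_n \la \tau_n \Lambda_g^\vee(1)\ra_{g,1}\hxi_{n+1}(t)$. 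Transposing this term to the LHS merges $\hxi_{n+1}(t) - \hxi_{n+1}(t)\hxi_{-1}(t) = \hxi_{n+1}(t)\bigl(1 - \hxi_{-1}(t)\bigr)$, reproducing~(\ref{eq:ell=1LT}).

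The hard part will be the careful bookkeeping for unstable geometry. In particular, when $g = 1$ the cut factor $H_{g-1}(\alpha,\beta) = H_0(\alpha,\beta)$ is itself unstable and must be handled via the $(0,2)$-convention~(\ref{eq:02Hodge}); writing $\frac{1}{\alpha + \beta} = \int_0^\infty e^{-(\alpha+\beta)s}\,ds$ and applying the change of variables $-dw = dt/\bigl(t^2(t-1)\bigr)$ should show that its Laplace transform is in fact polynomial in $t$ and is consistent with the identity. A secondary point is to justify the exchange of summations as an equality of holomorphic functions on $\operatorname{Re}(w) > 0$ --- equivalently, as an identity of symmetric polynomials in $t$ once $2g - 2 + \ell > 0$ --- which follows from the Stirling-type bounds discussed in Section~\ref{sect:ELSV}.
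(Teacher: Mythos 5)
Your proposal is correct and follows essentially the same route as the paper: specialize the cut-and-join equation to $\ell=1$, transform the LHS using $\sum_\mu \mu^{k}H_g(\mu)e^{-\mu(w+1)}$, factorize the $\alpha\beta$-weighted convolutions on the RHS, identify the unstable join contributions $\sum_\alpha \alpha H_0(\alpha)e^{-\alpha(w+1)}=\hxi_{-1}(t)$ via (\ref{eq:01Hodge}), and transpose them to the left. The only point where you go beyond the paper is the worry about the unstable cut factor $H_0(\alpha,\beta)$ when $g=1$; the paper's proof silently ignores this case (the RHS of (\ref{eq:ell=1LT}) is empty for $g=1$, so the identity is really being asserted for $g\ge 2$), and your suggestion that the $(0,2)$-term would turn out to be ``consistent with the identity'' as stated should not be taken for granted.
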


\begin{proof}
The cut-and-join
equation  for $\ell=1$ is a simple equation
\begin{equation}
\label{eq:caj-ell=1}
(2g-1+\mu) H_g(\mu) 
= \half \sum_{\a+\b=\mu} \a\b
\left(
{H}_{g-1}(\a,\b)
 + \sum_{g_1+g_2=g}H_{g_1}(\a)
H_{g_2}(\b)
\right).
\end{equation}
The Laplace transform of the LHS of (\ref{eq:caj-ell=1}) is
\begin{equation*}
\sum_{n\le 3g-2}\la \tau_n \Lam_g ^\vee (1)\ra_{g,1}
\left[
(2g-1)\hxi_n(t)+\hxi_{n+1}(t)
\right].
\end{equation*}
When summing over $\mu$ to compute the Laplace transform
of the RHS, we switch to sum over $\a$ and $\b$ independently.
The  factor $\half$ cancels the double count
on the diagonal. Thus
the Laplace transform of the \emph{stable}
geometries of the RHS is
\begin{equation*}
\half
\sum_{a+b\le 3g-4}
\left[
\la \tau_a\tau_b \Lam_{g-1} ^\vee (1)\ra_{g-1,2}
+\sum_{g_1+g_2=g} ^{\rm{stable}}
\la \tau_a\Lam_{g_1} ^\vee(1) \ra_{g_1,1}
\la \tau_b\Lam_{g_2} ^\vee(1) \ra_{g_2,1}
\right]
\hxi_{a+1}(t)\hxi_{b+1}(t).
\end{equation*}
The \emph{unstable} terms contained
in the second summand
of the RHS of (\ref{eq:caj-ell=1}) are the
 $g=0$ terms  $H_0(\a)H_g(\b)+H_g(\a)H_0(\b)$. 
We calculate the Laplace transform of these unstable terms 
using (\ref{eq:01Hodge}). Since
$$
H_0(\a) = \frac{\a^{\a-2}}{\a!},
$$
the result is
\begin{equation*}
\sum_{a}
\la \tau_a\Lam_{g} ^\vee(1) \ra_{g,1}\hxi_{-1}(t)\hxi_{a+1}(t).
\end{equation*}
This completes the proof.
\end{proof}

\begin{rem}
We note that (\ref{eq:ell=1LT}) is a polynomial
equation of degree $2n+2$. Since $\hxi_{-1}(t) = 1-\frac{1}{t}$,
the leading term of $\hxi_{n+1}(t)$ is canceled in the formula.
\end{rem}

To calculate the Laplace transform of the general 
case
(\ref{eq:cajcoefficient}), we need
to deal with both of the  unstable geometries $(g,\ell) = (0,1)$ and 
$(0,2)$. 
These are the exceptions for the general formula
(\ref{eq:Hhatgell}).
Recall the $(0,1)$ case (\ref{eq:01Hodge}). 
The formula
\begin{equation}
\label{eq:H01LT}
\hatH_{0,1}(t) = \sum_{k=1} ^\infty \frac{k^{k-2}}{k!}
e^{-k(w+1)} = -\frac{1}{2 \,t^2}+c = \hxi_{-2}(t),
\end{equation}
where the constant $c$ is given  by
$$
c=\sum_{k=1} ^\infty \frac{k^{k-2}}{k!}e^{-k},
$$
is used in (\ref{eq:ell=1LT}).
The  $(g,\ell) =(0,2)$ terms require a more careful computation.
We shall see that these are the terms that exactly
correspond to the terms involving the Cauchy differentiation kernel
in the Bouchard-Mari\~no recursion.

\begin{prop}
\label{prop:H02}
We have the following  Laplace transformation formula:
\begin{multline}
\label{eq:H02LT}
\widehat{\cH}_{0,2}(t_1,t_2) =
\sum_{\mu_1,\mu_2\ge 1}
\frac{1}{\mu_1+\mu_2}\cdot\frac{\mu_1 ^{\mu_1}}{\mu_1 !}\cdot\frac{\mu_2 ^{\mu_2}}{\mu_2 !}
e^{-\mu_1 (w_1+1)}e^{-\mu_2 (w_2+1)}\\
=\log\left(
\frac{\hxi_{-1}(t_1)-\hxi_{-1}(t_2)}
{x_1-x_2}
\right)
-\hxi_{-1}(t_1)-\hxi_{-1}(t_2).
\end{multline}
\end{prop}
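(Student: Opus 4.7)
The plan is to identify both sides of~(\ref{eq:H02LT}) as formal power series in $(x_1,x_2)$, where $x_i = e^{-(w_i+1)}$, and to show they are uniquely determined by satisfying the same first-order linear PDE together with having no constant term.

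First I would check that each side is a power series in $(x_1,x_2)$ with vanishing constant term. The LHS is manifestly such, since the summation runs over $\mu_1,\mu_2\ge 1$. For the RHS, write $Y_i := \hxi_{-1}(t_i)$; by~(\ref{eq:xihat}) applied at $n=-1$ we have $Y_i = \sum_{k\ge 1}\frac{k^{k-1}}{k!}x_i^k$, so that
$$\frac{Y_1 - Y_2}{x_1 - x_2} \;=\; \sum_{k\ge 1}\frac{k^{k-1}}{k!}\bigl(x_1^{k-1}+x_1^{k-2}x_2+\cdots+x_2^{k-1}\bigr)$$
is a genuine power series in $(x_1,x_2)$ with constant term $1$. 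Consequently $\log\bigl((Y_1-Y_2)/(x_1-x_2)\bigr)$ and $-Y_1-Y_2$ are each power series with no constant term, and the apparent diagonal singularities of $\log(x_1-x_2)$ and $\log(Y_1-Y_2)$ cancel.

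Next I would verify that both sides satisfy
$$(\partial_{w_1}+\partial_{w_2})\,f \;=\; -(t_1-1)(t_2-1).$$
For the LHS, applying $\partial_{w_i} x_i^{\mu_i} = -\mu_i x_i^{\mu_i}$ creates a factor $\mu_1+\mu_2$ in the summand that cancels the denominator, leaving $-\phi(x_1)\phi(x_2)$ with $\phi(z) := \sum_{k\ge 1}\frac{k^k}{k!}z^k = \hxi_0(t) = t-1$. For the RHS, using $\partial_{w_i}Y_i = -(t_i-1)$ (which follows from~(\ref{eq:dw}) combined with $d\hxi_{-1}/dt = 1/t^2$) and the algebraic identity $Y_1-Y_2 = (t_1-t_2)/(t_1 t_2)$, the computation collapses to $-t_1 t_2 + t_1 + t_2 - 1 = -(t_1-1)(t_2-1)$.

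The proof then concludes by uniqueness. The difference $G := \hatH_{0,2}-\text{RHS}$ is a power series $\sum c_{m,n}x_1^m x_2^n$ with $c_{0,0}=0$ satisfying $(\partial_{w_1}+\partial_{w_2})G=0$. Since $\partial_{w_i}$ multiplies each monomial by $-$(exponent of $x_i$), the PDE forces $(m+n)c_{m,n}=0$, so $c_{m,n}=0$ for all $(m,n)\ne(0,0)$; combined with $c_{0,0}=0$ this yields $G \equiv 0$. The main obstacle is the regularity check in the first step: one must confirm that the singular-looking ratio $(Y_1-Y_2)/(x_1-x_2)$ admits a power series expansion with nonzero constant term so that its logarithm is well-defined as a power series with no constant term; once this cancellation is in hand, the remainder of the argument is essentially uniqueness for a trivial transport equation.
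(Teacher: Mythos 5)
Your proposal is correct and follows essentially the same route as the paper: both arguments apply the Euler operator (your $\partial_{w_1}+\partial_{w_2}$ is just $-(x_1\partial_{x_1}+x_2\partial_{x_2})$), verify that the two sides have the same image, and conclude by uniqueness of power-series solutions. The only cosmetic difference is that you pin down the integration constant by observing that the constant coefficient vanishes, whereas the paper evaluates the difference at $x_2=0$ and checks $\phi(x,0)=0$ via the functional equation (\ref{eq:w-in-t}).
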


\begin{proof}
Since $x = e^{-(w+1)}$, (\ref{eq:H02LT}) is equivalent 
to 
\begin{equation}
\label{eq:H02-inx}
\sum_{\substack{\mu_1,\mu_2\ge 0\\(\mu_1,\mu_2)\ne (0,0)}}
\frac{1}{\mu_1+\mu_2}\cdot\frac{\mu_1 ^{\mu_1}}{\mu_1 !}\cdot\frac{\mu_2 ^{\mu_2}}{\mu_2 !}
x_1 ^{\mu_1}x_2 ^{\mu_2}
=\log\left(
\sum_{k=1} ^\infty \frac{k^{k-1}}{k!}\cdot
\frac{x_1 ^k - x_2 ^k}{x_1-x_2}
\right),
\end{equation}
where $|x_1|< e^{-1}, |x_2|<e^{-1}$, and $0<|x_1-x_2|<e^{-1}$
so that the formula is an equation of holomorphic functions 
in $x_1$ and $x_2$.
Define
\begin{equation*}
\phi(x_1,x_2)\overset{{\rm{def}}}{=}\sum_{\substack{\mu_1,\mu_2\ge 0\\(\mu_1,\mu_2)\ne (0,0)}}
\frac{1}{\mu_1+\mu_2}\cdot\frac{\mu_1 ^{\mu_1}}{\mu_1 !}\cdot\frac{\mu_2 ^{\mu_2}}{\mu_2 !}
x_1 ^{\mu_1}x_2 ^{\mu_2}
-\log\left(
\sum_{k=1} ^\infty \frac{k^{k-1}}{k!}\cdot
\frac{x_1 ^k - x_2 ^k}{x_1-x_2}
\right).
\end{equation*}
Then
\begin{multline*}
\phi(x,0)=\sum_{\mu_1\ge 1}
\frac{\mu_1 ^{\mu_1 -1}}{\mu_1 !}
x ^{\mu_1}
-\log\left(
\sum_{k=1} ^\infty \frac{k^{k-1}}{k!}\cdot
x ^{k-1}
\right)
\\
=
\hxi_{-1}(t) - \log\left(\frac{\hxi_{-1}(t)}{x}\right)
=
1-\frac{1}{t} -\log\left(1-\frac{1}{t}\right) +\log x
\\
=
1-\frac{1}{t} -\log\left(1-\frac{1}{t}\right) - w-1 = 0
\end{multline*}
due to (\ref{eq:w-in-t}). Here $t$ is restricted on the 
domain $Re(t)>1$. 
Since
\begin{multline*}
x_1 \frac{\partial}{\partial x_1}
\log\left(
\sum_{k=1} ^\infty \frac{k^{k-1}}{k!}\cdot
\frac{x_1 ^k - x_2 ^k}{x_1-x_2}
\right)
\\
=
t_1 ^2 (t_1-1)\frac{\partial}{\partial t_1}
\log\left(\hxi_{-1}(t_1)-\hxi_{-1}(t_2)\right)
-x_1 \frac{\partial}{\partial x_1}\log(x_1-x_2)
\\
=
t_1 ^2 (t_1-1)\frac{\partial}{\partial t_1}
\log\left(-\frac{1}{t_1}+\frac{1}{t_2}\right)
- \frac{x_1}{x_1-x_2}
\\
=
\frac{t_1t_2(t_1-1)}{t_1-t_2}-\frac{x_1}{x_1-x_2},
\end{multline*}
we have 
\begin{multline*}
\left(x_1 \frac{\partial}{\partial x_1}+
x_2 \frac{\partial}{\partial x_2}
\right)
\log\left(
\sum_{k=1} ^\infty \frac{k^{k-1}}{k!}\cdot
\frac{x_1 ^k - x_2 ^k}{x_1-x_2}
\right)
\\
=
\frac{t_1t_2(t_1-1)-t_1t_2(t_2-1)}{t_1-t_2}
-\frac{x_1-x_2}{x_1-x_2}
\\
=
t_1t_2 - 1 = \hxi_0(t_1)\hxi_0(t_2)+\hxi_0(t_1)+\hxi_0(t_2).
\end{multline*}
On the other hand, we also have
\begin{multline*}
\left(x_1 \frac{\partial}{\partial x_1}+
x_2 \frac{\partial}{\partial x_2}
\right)
\sum_{\substack{\mu_1,\mu_2\ge 0\\(\mu_1,\mu_2)\ne (0,0)}}
\frac{1}{\mu_1+\mu_2}\cdot\frac{\mu_1 ^{\mu_1}}{\mu_1 !}\cdot\frac{\mu_2 ^{\mu_2}}{\mu_2 !}
x_1 ^{\mu_1}x_2 ^{\mu_2}
\\
=
\hxi_0(t_1)\hxi_0(t_2)+\hxi_0(t_1)+\hxi_0(t_2).
\end{multline*}
Therefore, 
\begin{equation}
\label{eq:Eulerequation}
\left(x_1 \frac{\partial}{\partial x_1}+
x_2 \frac{\partial}{\partial x_2}
\right)\phi(x_1,x_2) = 0.
\end{equation}
Note that $\phi(x_1,x_2)$ is a holomorphic function 
in $x_1$ and $x_2$. Therefore, it has a series
expansion in homogeneous polynomials around $(0,0)$. 
Since a homogeneous polynomial in $x_1$ and $x_2$ of
degree $n$ is an eigenvector of the differential operator 
$x_1 \frac{\partial}{\partial x_1}+
x_2 \frac{\partial}{\partial x_2}$ belonging to the 
eigenvalue $n$, the only holomorphic
solution to the Euler differential
equation (\ref{eq:Eulerequation}) is a constant. But 
since $\phi(x_1,0)=0$, we conclude that 
$\phi(x_1,x_2)=0$. This completes the proof of 
(\ref{eq:H02-inx}), and hence the proposition.
\end{proof}

The following polynomial recursion  formula
was established in \cite{MZ}. Since
each of the polynomials 
$\hatH_{g,\ell}(t_L)$'s in (\ref{eq:cajLT})
satisfies the stability condition $2g-2+\ell>0$, 
it is equivalent to (\ref{eq:main}) after 
expanding the generating functions using 
(\ref{eq:Hhatgell}).

 \begin{thm}[\cite{MZ}]
 \label{thm:CAJLT}
The Laplace transform of the 
cut-and-join equation {\rm{(\ref{eq:cajcoefficient})}}
produces the following polynomial equation on the Lambert curve:\begin{multline}
\label{eq:cajLT}
\left(
2g-2+\ell +\sum_{i=1} ^\ell
\big(1-\hxi_{-1}(t_i)\big)
t_i ^2 (t_i-1)\frac{\partial }{\partial t_i}
\right)
\hatH_{g,\ell}(t_1,\dots,t_\ell)
\\
=
\sum_{i< j}t_it_j
 \frac{  t_i ^2(t_i-1)^2\frac{\partial}{\partial t_i}
    \hatH_{g,\ell-1}\left(t_1,\dots,\widehat{t_j},\dots,t_\ell\right)
    -
      t_j ^2(t_j-1)^2\frac{\partial}{\partial t_j}
    \hatH_{g,\ell-1}\left(t_1,\dots,\widehat{t_i},\dots,t_\ell\right)}{t_i-t_j}
   \\
- \sum_{i\ne j} t_i ^3(t_i-1)\frac{\partial}{\partial t_i}
\hatH_{g,\ell-1}\left(t_1,\dots,\widehat{t_j},\dots,t_\ell\right)
\\
+
\half
\sum_{i=1} ^\ell
\left[
u_1 ^2 (u_1-1) u_2 ^2(u_2-1)
\frac{\partial ^2}{\partial u_1\partial u_2}
\hatH_{g-1,\ell+1}\left(u_1,u_2,t_{L\setminus\{i\}}\right)
\right]_{u_1=u_2=t_i}
\\
+
\half
\sum_{i=1} ^\ell
\sum_{\substack{g_1+g_2 = g\\
J\sqcup K= L\setminus\{i\}}} ^{\rm{stable}}
t_i ^2 (t_i-1) 
\frac{\partial }{\partial t_i}
\hatH_{g_1,|J|+1}(t_i,t_J)\cdot 
t_i ^2 (t_i-1) 
\frac{\partial }{\partial t_i}
\hatH_{g_2,|K|+1}(t_i,t_K) .
\end{multline}
In the last sum each term is restricted to 
satisfy the stability conditions
$2g_1-1+|J|>0$ and $2g_2-1+|K|>0$.
\end{thm}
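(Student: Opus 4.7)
The strategy is to apply the Laplace transform $\sum_{\mu\in\bN^\ell}(\,\cdot\,)e^{-\mu\cdot(w+1)}$ term-by-term to the cut-and-join equation (\ref{eq:cajcoefficient}), then translate everything into the $t$-coordinates of the Lambert curve using the operator identity
\begin{equation*}
-\frac{\partial}{\partial w} \;=\; t^2(t-1)\frac{\partial}{\partial t},
\end{equation*}
which is a direct consequence of (\ref{eq:dw}). This will generalize in a mechanical way the $\ell=1$ computation carried out in Proposition 3.4, but keeping track of several kinds of unstable contributions.

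First, for the left-hand side of (\ref{eq:cajcoefficient}), the constant part of $r(g,\mu)=2g-2+\ell+|\mu|$ yields $(2g-2+\ell)\,\hatH_{g,\ell}(t_L)$, while the $|\mu|=\sum\mu_i$ part becomes $\sum_i t_i^2(t_i-1)\partial_{t_i}\hatH_{g,\ell}$ via the operator identity. For the self-cut term $\tfrac12\sum_i\sum_{\alpha+\beta=\mu_i}\alpha\beta\, H_{g-1}(\mu(\hat i),\alpha,\beta)$, I exploit the factorization $e^{-\mu_i(w_i+1)}=e^{-\alpha(w_i+1)}e^{-\beta(w_i+1)}$ to split the double sum over $\alpha,\beta$ into two independent Laplace sums; the factor $\alpha\beta$ produces the two differential operators $u_k^2(u_k-1)\partial_{u_k}$ that are then set equal to $t_i$, yielding the $\hatH_{g-1,\ell+1}$ term in (\ref{eq:cajLT}). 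The same factorization handles the product cut term, giving a product of two Laplace transforms over the stable decompositions $g_1+g_2=g$, $J\sqcup K=L\setminus\{i\}$.

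The unstable contributions have to be separated and treated by hand. In the product cut term the $(0,1)$-pieces (where $g_1=0$, $\nu_1=\emptyset$, or symmetrically) give $H_0(\alpha)=\alpha^{\alpha-2}/\alpha!$, and using $\sum_\alpha \alpha\,H_0(\alpha)\,e^{-\alpha(w+1)}=\hxi_{-1}(t)$ these assemble into $\sum_i \hxi_{-1}(t_i)\,t_i^2(t_i-1)\partial_{t_i}\hatH_{g,\ell}$; transposing this to the left-hand side produces the factor $(1-\hxi_{-1}(t_i))$ in (\ref{eq:cajLT}). Whenever a join or cut produces an $(0,2)$-geometry, I absorb it using Proposition 2.2 and the identity $\hxi_{-1}(t_1)-\hxi_{-1}(t_2)=(t_1-t_2)/(t_1t_2)$.

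The main obstacle is the join term $\sum_{i<j}(\mu_i+\mu_j)H_g(\mu(\hat i,\hat j),\mu_i+\mu_j)$. Setting $m=\mu_i+\mu_j$ and using the elementary identity
\begin{equation*}
\sum_{\substack{\alpha+\beta=m\\ \alpha,\beta\geq 1}} x_i^{\alpha}x_j^{\beta} \;=\; \frac{x_ix_j\bigl(x_i^{m-1}-x_j^{m-1}\bigr)}{x_i-x_j},
\end{equation*}
the Laplace transform of this term naturally takes the form of a difference quotient in the $x$-variables acting on $\hatH_{g,\ell-1}$. The delicate step is to convert this into a difference quotient in the $t$-variables, where the polynomial structure lives: one has to expand $x_i-x_j$ along $\hxi_{-1}(t_i)-\hxi_{-1}(t_j)$ using the Lambert parametrization $x=\hxi_{-1}(t)e^{-\hxi_{-1}(t)}$ and the expression (\ref{eq:H02LT}). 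This manipulation will produce exactly the principal difference quotient appearing on the first line of the right-hand side of (\ref{eq:cajLT}), together with a residual symmetric piece that becomes the correction term $-\sum_{i\neq j} t_i^3(t_i-1)\partial_{t_i}\hatH_{g,\ell-1}$. Verifying that these two pieces agree identically, as symmetric polynomials in the $t$-variables, is the essential algebraic identity on which the theorem rests; after this the polynomiality of $\hatH_{g,\ell}$ (for $2g-2+\ell>0$) follows by induction on the complexity $2g-2+\ell$.
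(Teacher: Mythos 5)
Your overall architecture is the right one, and it is the route the paper itself indicates (the theorem is quoted from \cite{MZ}; the paper only supplies the $\ell=1$ case in Proposition~\ref{prop:ell=1LT} and the $(0,2)$ computation in Proposition~\ref{prop:H02} as worked ingredients): term-by-term Laplace transform of (\ref{eq:cajcoefficient}), the operator identity $-\partial/\partial w=t^2(t-1)\,\partial/\partial t$, factorization of the $\alpha\beta$-sums into two independent Laplace sums for the cut terms, and transposition of the unstable $(0,1)$ pieces to create the factor $1-\hxi_{-1}(t_i)$. Those steps are all correct as you describe them.

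The gap is in the join term, which you yourself call ``the essential algebraic identity'' but do not verify, and whose mechanism you misattribute. After your geometric-series identity, the join term contributes $\sum_{i\ne j}\frac{x_j}{x_i-x_j}\,t_i^2(t_i-1)\partial_{t_i}\hatH_{g,\ell-1}(t_{L\setminus\{j\}})$, which is a transcendental function of the $t$-variables; no re-expansion of $x_i-x_j$ ``along $\hxi_{-1}(t_i)-\hxi_{-1}(t_j)$'' applied to this term alone can produce the polynomial difference quotient of (\ref{eq:cajLT}). What actually works is adding the $(0,2)$-unstable part of the product-cut sum (the terms with $g_1=0$, $\nu_1=\{\mu_j\}$ and their mirrors), which you relegate to a side remark. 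That piece Laplace-transforms to $\sum_{i\ne j}\bigl(t_i^2(t_i-1)\partial_{t_i}\hatH_{0,2}(t_i,t_j)\bigr)\,t_i^2(t_i-1)\partial_{t_i}\hatH_{g,\ell-1}(t_{L\setminus\{j\}})$, and the computation inside the proof of Proposition~\ref{prop:H02} gives
\begin{equation*}
t_i^2(t_i-1)\frac{\partial}{\partial t_i}\hatH_{0,2}(t_i,t_j)
=\frac{t_it_j(t_i-1)}{t_i-t_j}-\frac{x_i}{x_i-x_j}-(t_i-1).
\end{equation*}
Adding the join coefficient $\frac{x_j}{x_i-x_j}$, the $x$-poles cancel and the coefficient becomes $\frac{t_it_j(t_i-1)}{t_i-t_j}-t_i$; symmetrizing over $i<j$ yields exactly the first line of the right-hand side of (\ref{eq:cajLT}) together with the correction $-\sum_{i\ne j}t_i^3(t_i-1)\partial_{t_i}\hatH_{g,\ell-1}$. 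In other words, the polynomial difference quotient comes from the $\log\bigl(\hxi_{-1}(t_i)-\hxi_{-1}(t_j)\bigr)$ part of (\ref{eq:H02LT}), not from the join term, whose only role is to kill the $\frac{1}{x_i-x_j}$ singularity of the $(0,2)$ contribution. Until this cancellation is carried out, your argument does not close. (A minor point: the polynomiality of $\hatH_{g,\ell}$ needs no induction --- it is immediate from (\ref{eq:Hhatgell}) since each $\hxi_n$ is a polynomial; what requires proof is only that the stated equation holds.)
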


\begin{rem}
The polynomial equation (\ref{eq:cajLT}) is 
equivalent to the original cut-and-join
equation (\ref{eq:cajcoefficient}).
Note that the topological recursion structure of 
(\ref{eq:cajLT}) is exactly the same as (\ref{eq:BMalgebraic}).
Although (\ref{eq:cajLT}) contains more terms,
all functions involved are polynomials that are easy to 
calculate from (\ref{eq:hxi-recursion}), whereas 
 (\ref{eq:BMalgebraic}) requires computation
of the involution $s(t)$ of (\ref{eq:s}) and infinite
series expansions.
\end{rem}

\begin{rem}
\label{rem:WK}
It is an easy task to deduce the
Witten-Kontsevich theorem, i.e., the Virasoro constraint 
condition for the $\psi$-class intersection numbers
\cite{W1991, K1992}, from (\ref{eq:cajLT}).
Let us use the normalized
notation $\sigma_n = (2n+1)!!\tau_n$ for the $\psi$-class
intersections. 
Then the formula  according to Dijkgraaf, Verlinde and Verlinde
\cite{DVV} is 
\begin{multline}
\label{eq:DVV}
\la \sigma_n\sigma_{n_L}\ra _{g,\ell+1}=
\frac{1}{2} \, \sum_{a+b=n-2}
\la \sigma_a\sigma_b\sigma_{n_L}\ra _{g-1,\ell+2}
+
\sum_{i\in L} (2n_i +1) \la \sigma_{n+n_i-1}
 \sigma_{n_{L\setminus \{i\}}}
\ra _{g,\ell}\\
+\frac{1}{2}
\sum_{\substack{g_1+g_2=g\\
I\sqcup J= L}} ^{
\text{stable}}
\sum_{a+b=n-2}
\la \sigma_a\sigma_{n_I}\ra _{g_1,|I|+1}\cdot
\la \sigma_b\sigma_{n_J}\ra _{g_2,|J|+1}.
\end{multline}
 Eqn.(\ref{eq:DVV}) is exactly the relation of the homogeneous  top
 degree terms of (\ref{eq:cajLT}),
 after canceling the
 highest degree terms coming from 
 $\hxi_{n_i+1}(t_i)$ in the LHS \cite{MZ}.
 This derivation is in the same spirit as those found in 
 \cite{CLL, KL, OP1}, though the argument is much clearer
 due to the polynomial nature of our equation.
 \end{rem}

\section{The Bouchard-Mari\~no recursion formula
for Hurwitz numbers}
\label{sect:BM}

In this section we present the precise statement of
the Bouchard-Mari\~no conjecture on Hurwitz numbers.

Recall the function we introduced in (\ref{eq:t-in-w}):
\begin{equation}
\label{eq:t}
t =  t(x) = 1+ \sum_{k=1} ^\infty \frac{k^k}{k!} x^k
\end{equation}
This is closely related to 
the \emph{Lambert  $W$-function}
\begin{equation}
\label{eq:LambertW}
W(x) = -\sum_{k=1}^{\infty}
\frac{k^{k-1}}{k!}(-x)^{k}.
\end{equation}
By abuse of terminology, we also call the function $t(x)$ 
of (\ref{eq:t}) the Lambert function. 
The power series (\ref{eq:t}) has the radius of convergence
$1/e$, and its  inverse function is given by
\begin{equation}
\label{eq:x}
x = x(t) = \frac{1}{e} \left(1-\frac{1}{t}\right)e^{\frac{1}{t}}.
\end{equation} 
Motivated by the Lambert $W$-function, 
 a plane analytic curve
\begin{equation}
\label{eq:curve}
C = \{(x,t)\;|\; x = x(t)\} \subset \bC ^* \times \bC ^*
\end{equation}
is introduced in \cite{BM}, which is exactly the Lambert
curve (\ref{eq:Lambert-xy}).
We denote by $\pi:C\rightarrow \mathbb{C}$ the $x$-projection. 
 Bouchard-Mari\~no \cite{BM} then defines
a tower of polynomial differentials on the Lambert curve
$C$ by
\begin{equation}
\label{eq:xin-recursion}
\xi_n(t) = \frac{d}{dt} \bigg[t^2(t-1)\;\xi_{n-1}(t)\bigg]
\end{equation}
with the initial condition
\begin{equation}
\label{eq:xi0}
\xi_0(t) = dt .
\end{equation}
It is obvious from (\ref{eq:xin-recursion}) and
(\ref{eq:xi0}) that for $n\ge 0$, $\xi_n(t)$ is
a polynomial $1$-form of degree $2n$ with a general expression
\begin{equation*}
\xi_n(t)
= t^n\left[
(2n+1)!!\, t^n - \left(\frac{(2n+3)!!}{3}-(2n+1)!!\right)
t^{n-1} + \cdots + (-1)^n (n+1)!\right] dt.
\end{equation*}
 All the coefficients of $\xi_n(t)$ have
a combinatorial meaning called the \emph{second order reciprocal
Stirling numbers}. As we will note below, the leading
coefficient is responsible for the Witten-Kontsevich theorem
on the cotangent class intersections, and the lowest coefficient
is related to the $\lambda_g$-formula \cite{MZ}. For a convenience,
we also use $\xi_{-1}(t) = t^{-2}\, dt$
and $\xi_{-2}(t) = t^{-3}\, dt$.

\begin{rem}
The polynomial  $\hxi_n(t)$ of (\ref{eq:xihat}) is a primitive
of $\xi_n(t)$:
\begin{equation}
\label{eq:dhxi=xi}
d\hxi_n(t) = \xi_n(t).
\end{equation}
\end{rem}

\begin{Def}
\label{Def:Hdiff}
Let us call   the symmetric polynomial
differential form  
\begin{equation*}
d^{\tensor \ell}\,\hatH_{g,\ell}(t_1,\dots,t_\ell) =
\sum_{ n_1+\cdots +n_\ell \leq 3g-3+\ell} 
\la \tau_{n_1}\cdots \tau_{n_\ell}\Lambda_g^{\vee}(1)\ra\,\bigotimes_{i=1}^{\ell} \xi_{n_i}(t_i)
\end{equation*}
on $C^\ell$ the \emph{Hurwitz differential} of type
$(g,\ell)$.
\end{Def}

\begin{rem}
Our $\xi_n(t)$ is exactly the same as the $\zeta_n(y)$-differential of \cite{BM}.
However, this  mere coordinate change happens to be
essential. Indeed,
 the
fact that our expression is a \emph{polynomial} in $t$-variables
allows us to calculate the residues in the
Bouchard-Mari\~no formula in  Section~\ref{sect:residue}. 
\end{rem}

\begin{rem}
The degree of $d^{\tensor \ell}\,\hatH_{g,\ell}(t_1,\dots,t_\ell)$  is 
$2(3g-3+\ell)$, and  the homogeneous top degree terms
give a generating function of the $\psi$-class intersection
numbers
$$
\sum_{ n_1+\cdots +n_\ell = 3g-3+\ell} 
\la \tau_{n_1}\cdots \tau_{n_\ell}\ra\,
\prod_{i=1} ^\ell
(2n_i+1)!!\; t_i ^{2n_i} 
\bigotimes_{i=1}^{\ell}dt_i .
$$
The homogeneous lowest degree terms of 
$d^{\tensor \ell}\,\hatH_{g,\ell}(t_1,\dots,t_\ell)$ are
$$
(-1)^{3g-3+\ell}
\sum_{ n_1+\cdots +n_\ell = 2g-3+\ell} 
\la \tau_{n_1}\cdots \tau_{n_\ell}\lam_g\ra\,
\prod_{i=1} ^\ell
(n_i+1)!\; t_i ^{n_i} 
\bigotimes_{i=1}^{\ell}dt_i .
$$
The combinatorial coefficients
of the $\lam_g$-formula \cite{FP1, FP2}
can be directly deduced from the topological recursion
formula
(\ref{eq:main})
\cite{MZ}, explaining the mechanism found in \cite{GJV3}.
\end{rem}

\begin{rem}
The unstable Hurwitz differentials follow from  
 (\ref{eq:01Hodge}) and (\ref{eq:H02LT}). They are
\begin{align}
\label{eq:H01}
& d \,\hatH_{0,1}(t)= \frac{1}{t^3}\;dt = \xi_{-2}(t);
\\
\label{eq:H02}
& d^{\tensor 2}\,\hatH_{0,2}(t_1,t_2)= \frac{dt_1\tensor dt_2}{(t_1-t_2)^2}
-\pi^*\frac{dx_1\tensor dx_2}{(x_1-x_2)^2}.
\end{align}
\end{rem}

\begin{rem}
\label{rem:stable}
The simplest  stable Hurwitz differentials are given by
\begin{equation}
\label{eq:H03H11}
\begin{aligned}
&d^{\tensor 3}\,\hatH_{0,3}(t_1,t_2,t_3)=
dt_1\tensor dt_2\tensor dt_3 ;
\\
&d \,\hatH_{1,1}(t)
 = \frac{1}{24}\big( -\xi_0(t) + \xi_1(t)
\big) = \frac{1}{24}(t-1)(3t+1)\,dt.
\end{aligned}
\end{equation}
\end{rem}

The amazing insight of Bouchard and Mari\~no \cite{BM} is that the
Hurwitz differentials of Definition~\ref{Def:Hdiff} should satisfy the
 topological recursion relation of 
 Eynard and Orantin \cite{EO1} based on the 
 analytic curve $C$ of (\ref{eq:curve}) as the spectral curve.  
 Since the topological recursion utilizes the critical 
 behavior of the $x$-projection 
$\pi:C\rightarrow \bC^*$, let us examine the local structure of
$C$ around its
critical points. 
Let $z=-\frac{1}{t}$ be a coordinate of $C$ 
centered at $t=\infty$.
The Lambert curve is then given by
$$
x = \frac{1}{e} (1+z)e^{-z}.
$$
We see that the $x$-projection 
$\pi:C\rightarrow \bC^*$ has a unique critical point $q_0$
at $z=0$. 
Locally around $q_0$
the curve $C$ is a double
cover of $\bC$ branched at $q_0$. For a point
$q\in C$ near $q_0$, let us denote by $\bar{q}$  the 
Galois conjugate
point on $C$ that has the same $x$-coordinate.
 Let 
$S(z)$   be the local deck-transformation of
the covering $\pi:C\rightarrow \bC^*$.  Its defining equation 
\begin{equation}
\label{eq:sz}
S(z)  - \log \big(1+S(z)\big) = z- \log (1+z) = \sum_{m=2} ^\infty
\frac{(-1)^m}{m} z^m
\end{equation}
has a unique analytic solution other than $z$ itself,
which has a branch cut along $(-\infty, -1]$.
We note that  $S$  is an involution $S\big(S(z)\big) = z$, 
and has a Taylor
expansion
\begin{equation*}
S(z) = -z + \frac{2}{3} z^2 -\frac{4}{9} z^3 +
\frac{44}{135} z^4 -\frac{104}{405} z^5
+\frac{40}{189} z^6 -\frac{7648}{42525} z^7 
+ \frac{2848}{18225}z^8
+ O(z^{9})
\end{equation*}
for $|z|<1$.
In terms of the $t$-coordinate, the involution 
corresponds to $s(t)$ of (\ref{eq:s}):
\begin{equation*}
\begin{cases}
t(q) =- \frac{1}{z} = t \\
t(\bar{q}) = -\frac{1}{S(z)} = s(t) 
\end{cases} .
\end{equation*}
The equation (\ref{eq:sz}) defining $S(z)$ translates
 into a relation
  \begin{equation}
 \label{eq:stdifferentialrelation}
 \frac{dt}{t^2(t-1)} = \frac{ds(t)}{ s(t)^2 \big(s(t)-1\big)} =
 -dw=-vdv=
 \pi^*\left( \frac{dx}{x}\right) .
 \end{equation}
Using the global coordinate $t$ of the Lambert curve $C$, 
the Cauchy differentiation kernel (the one called the
 \emph{Bergman kernel}
 in \cite{EO1,BM})  is defined by
\begin{equation}
\label{eq:bergman}
B(t_1,t_2) =  \frac{dt_1\tensor dt_2}{(t_1-t_2)^2} 
=d_{t_1} d_{t_2}\log(t_1-t_2).
\end{equation}
We have  already encountered it in (\ref{eq:H02})
in the expression of $\cH_{0,2}(t_1,t_2)$.
Following \cite{EO1}, define a $1$-form on $C$ by
\begin{multline*}
dE(q,\bar{q}, t_2) = \frac{1}{2} \int_q  ^{\bar{q} }
B(\,\cdot\, , t_2)
= \frac{1}{2} \left(
\frac{1}{t_1 - t_2} - \frac{1}{s(t_1) -t_2}
\right)  dt_2 \\
=\half\bigg( \hat{\xi}_{-1}\big(s(t_1)-t_2\big) 
-\hat{\xi}_{-1}(t_1-t_2)\bigg) dt_2,
\end{multline*}
where the integral is taken with respect to the first 
variable of $B(t_1,t_2)$ along any path from $q$ to 
$\bar{q}$. 
The natural holomorphic symplectic form 
on $\bC^*\times \bC^*$ is given by
\begin{equation*}
\Omega = d\log y \wedge d\log x = d\log\left(1-\frac{1}{ t}
\right) \wedge d\log x .
\end{equation*}
Again following \cite{BM, EO1},
let us introduce another $1$-form on the curve $C$ by 
\begin{equation*}
\omega(q,\bar{q}) =  \int_{q} ^{\bar{q}}
\Omega(\;\cdot\;, x(q))
=\left(\frac{1}{t}-\frac{1}{s(t)}\right)
 \frac{dt}{t^2(t-1)}
= \bigg(
\hat{\xi}_{-1}\big(s(t)\big)-\hat{\xi}_{-1}(t)\bigg)
\frac{dt}{t^2(t-1)} .
\end{equation*}
The kernel operator is defined as the \emph{quotient}
\begin{equation*}
K(t_1,t_2)=
\frac{dE(q,\bar{q}, t_2)}{\omega(q,\bar{q}) }
= \frac{1}{2} \cdot \frac{t_1}{t_1-t_2}\cdot\frac{s(t_1)}{s(t_1)-t_2}
\cdot\frac{t_1 ^2(t_1 -1)}{dt_1}\tensor dt_2, 
\end{equation*}
which is a linear algebraic
operator acting on symmetric differential forms on $C^\ell$
 by
replacing   $dt_1$ with $dt_2$. 
We note that
\begin{equation}
\label{eq:kernelrelation}
K(t_1,t_2) = K\big(s(t_1),t_2\big)\;,
\end{equation}
which follows from (\ref{eq:stdifferentialrelation}). 
In the $z$-coordinate, the kernel has the expression 
\begin{multline}
\label{eq:kernel-in-z}
K= -\frac{1}{2} \cdot \frac{1+z}{z}\cdot
\frac{1}{\big(1+zt\big)\big(1+S(z)t\big)}\cdot 
dt\tensor 
\frac{1}{dz}
\\
= -\frac{1}{2} \cdot \frac{1+z}{z}
\left(
\sum_{m=0} ^\infty
(-1)^m \cdot \frac{z^{m+1} - S(z)^{m+1}}{z-S(z)}
\cdot t^m
\right)
 dt\tensor 
\frac{1}{dz}\\
=-\frac{1}{2} \left(
\frac{1}{z} + 1 +  \frac{1}{3}(3t-2)tz
+ \frac{1}{9}(3t-2)tz^2 \right. \\
 +
\left. \frac{1}{135} (135t^3-180t^2+30t+16)tz^3
+\cdots
\right)  dt\tensor 
\frac{1}{dz}\;.
\end{multline}

\begin{Def}
The \emph{topological recursion formula} is an inductive
mechanism of defining
a  symmetric $\ell$-form 
$$
W_{g,\ell}(t_L)
=W_{g,\ell}(t_1,\dots,t_\ell)
$$
on $C^\ell$ for any given $g$ and $\ell$ subject to $2g-2+\ell>0$
by
\begin{multline}
\label{eq:recursion}
W_{g,\ell+1}  (t_0,t_L) 
 =-\frac{1}{2\pi i}\oint_{\gamma_\infty}\bigg[
K(t,t_0)
\bigg( W_{g-1, \ell+2} \big(t,s(t),t_L\big) 
\\
 +
\sum_{i=1} ^\ell \Big(
W_{g,\ell}\big(t,t_{L\setminus \{i\}}\big) 
\tensor  B\big(
s(t), t_i\big) + 
W_{g,\ell}\big(s(t),t_{L\setminus \{i\}}\big) 
\tensor  B(t,t_i) \Big)
\\
+ \sum_{g_1+g_2=g, \;
I\sqcup J= L} ^{
\text{stable terms}} 
W_{g_1, |I|+1}
\big( t,t_I\big)
\tensor 
W_{g_2, |J|+1}\big(s(t),t_J\big)\bigg)\bigg]. 
\end{multline}
Here $t_I = (t_i)_{i\in I}$ for a subset $ I\subset L=\{1,2,\dots,\ell\}$,
and   the last sum is taken over all partitions of $g$ and disjoint
decompositions  $I\sqcup J=  L$ subject to the stability 
condition $2g_1 - 1 + |I| >0$ and
$2 g_2 -1 +|J| >0$.
 The  integration is taken with respect to $dt$ on the contour 
 $\gamma_\infty$, which is a 
positively oriented loop in the complex $t$-plane
of large radius so that $|t|>\max(|t_0|, |s(t_0)|)$ for $t\in \gamma_\infty$. 
\end{Def}

 Now we can state the Bouchard-Mari\~no
conjecture, which we prove in Section~\ref{sect:proof}.

\begin{conj}[Bouchard-Mari\~no Conjecture \cite{BM}]
\label{conj:BM}
For every $g$ and $\ell$ subject to the stability condition
$2g-2+\ell >0$, the topological recursion formula 
{\rm{(\ref{eq:recursion})}}
with the initial condition 
\begin{equation}
\label{eq:initial}
\begin{cases}
W_{0,3}(t_1,t_2,t_3) = dt_1\tensor dt_2\tensor dt_3\\
W_{1,1}(t_1) = \frac{1}{24}(t_1-1)(3t_1+1)\;dt_1
\end{cases}
\end{equation}
gives
the Hurwitz differential
$$
W_{g,\ell}(t_1,\dots,t_\ell)= d^{\tensor \ell}\,\hatH_{g,\ell}(t_1,\dots,t_\ell) .
$$
\end{conj}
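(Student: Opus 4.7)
The plan is to derive the algebraic Bouchard-Mari\~no identity (\ref{eq:BMalgebraic}) from the Laplace-transformed cut-and-join equation (\ref{eq:main}), and then to show that (\ref{eq:BMalgebraic}) is equivalent to the residue recursion (\ref{eq:recursion}). Combined with the matching of initial data (\ref{eq:initial}) against (\ref{eq:H03H11}), and the manifest uniqueness of solutions of (\ref{eq:recursion}) given $W_{0,3}$ and $W_{1,1}$ (since each step strictly reduces the complexity $2g-2+\ell$), this settles the conjecture.

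First I would rewrite the residue calculation in (\ref{eq:recursion}) as an algebraic operation on the Lambert curve. The contour $\gamma_\infty$ encloses only the unique critical point of the $x$-projection at $t=\infty$, and since $K(t,t_0)=K(s(t),t_0)$ by (\ref{eq:kernelrelation}), the residue equals one-half the sum of the residues obtained by evaluating the integrand at $t$ and at its Galois conjugate $\bar t=s(t)$. The key algebraic fact is that for a symmetric rational expression in $t$ and $s(t)$ multiplied by $\frac{dt}{t^2(t-1)}$, the residue at $t=\infty$ reduces to extracting the polynomial part in $t$ of the product with the kernel $\frac{ts(t)}{t-s(t)}\cdot\frac{dt}{t^2(t-1)}$. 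This is precisely the $[\,\cdot\,]_+$ operation in the definitions of $P_{a,b}$ and $P_n$ in Theorem \ref{thm:BMconj}; hence (\ref{eq:recursion}) is equivalent to (\ref{eq:BMalgebraic}) once we substitute $W_{g,\ell}=d^{\tensor\ell}\hatH_{g,\ell}$.

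Next I would take the total exterior derivative of (\ref{eq:main}) in all $\ell$ variables, converting each factor $\hxi_{n_i}(t_i)$ into the $1$-form $d\hxi_{n_i}(t_i)$. Combining the $(2g-2+\ell)\hxi_{n_L}$-term with the sum $\sum_i \tfrac{1}{t_i}\hxi_{n_i+1}(t_i)\hxi_{L\setminus\{i\}}$ on the LHS, and using the defining recursion $\hxi_{n+1}=t^2(t-1)\,\hxi_n'$, yields after differentiation precisely the LHS of (\ref{eq:BMalgebraic}). On the RHS of (\ref{eq:main}), the differences $\bigl(\hxi_{m+1}(t_i)\hxi_0(t_j)t_i^2-\hxi_{m+1}(t_j)\hxi_0(t_i)t_j^2\bigr)/(t_i-t_j)$ and the bilinear terms $\hxi_{a+1}(t_i)\hxi_{b+1}(t_i)$ must be symmetrized under the involution $t\leftrightarrow s(t)$ applied at a distinguished variable; the identity (\ref{eq:stdifferentialrelation}), $\frac{dt}{t^2(t-1)}=\frac{ds(t)}{s(t)^2(s(t)-1)}$, is exactly what permits this averaging. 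Multiplying by the kernel and taking the polynomial part then produces the $P_{a,b}(t)$ and $P_n(t,t_i)$ of Theorem \ref{thm:BMconj}.

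The main obstacle will be the precise bookkeeping of unstable contributions. The cut-and-join equation (\ref{eq:cajcoefficient}) contains join terms involving $H_0(\alpha)$ and $H_{0,2}$, whose Laplace transforms involve the non-polynomial piece $\hxi_{-1}(t)$ and the logarithm appearing in Proposition \ref{prop:H02}. After exterior differentiation and Galois averaging, these unstable pieces must assemble into the mixed terms of (\ref{eq:BMalgebraic}) and, correspondingly, into the genuine mixed terms $W_{g,\ell}(t,t_{L\setminus\{i\}})\tensor B(s(t),t_i)$ of (\ref{eq:recursion}), in which the Bergman kernel $B$ --- which by (\ref{eq:H02}) and (\ref{eq:bergman}) is exactly the second derivative of the logarithm in $\hatH_{0,2}$ --- enters explicitly. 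Verifying that the $\hxi_{-1}$-factors and the logarithmic pieces in (\ref{eq:cajLT}) combine after differentiation into precisely the Bergman-kernel contributions of (\ref{eq:recursion}) is the technical heart of the proof, carried out in Sections \ref{sect:residue}, \ref{sect:Laplace}, and \ref{sect:proof}.
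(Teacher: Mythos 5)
Your proposal follows the paper's own strategy essentially verbatim: the residues over $\gamma_\infty$ are converted into the polynomial-part operation $[\,\cdot\,]_+$ exactly as in Section~\ref{sect:residue}, the Laplace-transformed cut-and-join equation is differentiated and Galois-averaged via the direct image of $\pi:C\rightarrow\bC$ applied at a distinguished variable as in Sections~\ref{sect:Laplace} and~\ref{sect:proof}, and the unstable $(0,1)$- and $(0,2)$-contributions are correctly identified as the source of the $\hxi_{-1}$-factors and the Bergman-kernel terms. Your explicit appeal to the uniqueness of solutions of (\ref{eq:recursion}) given the initial data (\ref{eq:initial}) is a correct (and in the paper only implicit) final step.
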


\begin{rem}
In the literature \cite{BM, EO1}, the topological recursion is
written as
\begin{multline}
\label{eq:recursion2}
W_{g,\ell+1}  (t_0,t_L) 
 =\Res_{q=\bar{q}}\Bigg[
\frac{dE(q,\bar{q},t_0)}{\omega(q,\bar{q})}
\Bigg( W_{g-1, \ell+2} \big(t(q),t(\bar{q}),t_L\big) 
\\
+ \sum_{g_1+g_2=g, \;
I\sqcup J= L} 
W_{g_1, |I|+1}
\big( t(q),t_I\big)
\tensor 
W_{g_2, |J|+1}\big(t(\bar{q}),t_J\big)\Bigg)\Bigg] ,
\end{multline}
including \emph{all} possible terms in the second line, 
with the initial condition
\begin{equation}
\label{eq:EOinitial}
\begin{cases}
W_{0,1}(t_1) = 0\\
W_{0,2}(t_1,t_2) = B(t_1,t_2) .
\end{cases}
\end{equation}
If we single out the stable terms from 
(\ref{eq:recursion2}), then we obtain (\ref{eq:recursion}). 
Although the initial values of $W_{g,\ell}$ given 
in (\ref{eq:EOinitial}) are different from
(\ref{eq:H01}) and (\ref{eq:H02}), the advantage of
(\ref{eq:recursion2}) is to 
be able to 
include (\ref{eq:initial}) as a consequence of the recursion.
\end{rem}

\begin{rem}
It is established in \cite{EO1}
that a solution of the topological recursion is 
a \emph{symmetric} differential form in general.
In our case, 
 the
RHS of the recursion formula (\ref{eq:recursion})
does not appear to be  symmetric in $t_0, t_1,\dots,t_{\ell}$.
We note that our proof of the formula
establishes this symmetry because the Hurwitz differential
is a symmetric polynomial. This situation is again 
strikingly similar to the Mizrakhani recursion \cite{Mir1, Mir2},
where the symmetry appears not as a consequence of 
the recursion, but rather as the geometric nature of the
quantity the recursion calculates, namely, the 
Weil-Petersson volume of the moduli space of bordered 
hyperbolic surfaces.
\end{rem}

\section{Residue calculation}
\label{sect:residue}

In this section we calculate the
residues appearing in the recursion formula
(\ref{eq:recursion}). It turns out to be
equivalent to the direct image operation with respect to
the projection $\pi:C\rightarrow \bC$.

Recall that the kernel $K(t,t_0)$ is a rational expression in terms of
$t, s(t)$ and $t_0$. The function $s(t)$ is an involution 
$s\big(s(t)\big) = t$ defined outside of the slit 
$[0,1]$ of the complex $t$-plane, with logarithmic singularities
at $0$ and $1$. Our idea of computing the residue is to 
decompose the integration over the loop $\gamma_\infty$
into the sum of integrations over $\gamma_\infty - \gamma_{[0,1]}$ and
$\gamma_{[0,1]}$, where $\gamma_{[0,1]}$ is a 
positively oriented thin loop containing
the interval $[0,1]$.

\begin{figure}[htb]
\centerline{\epsfig{file=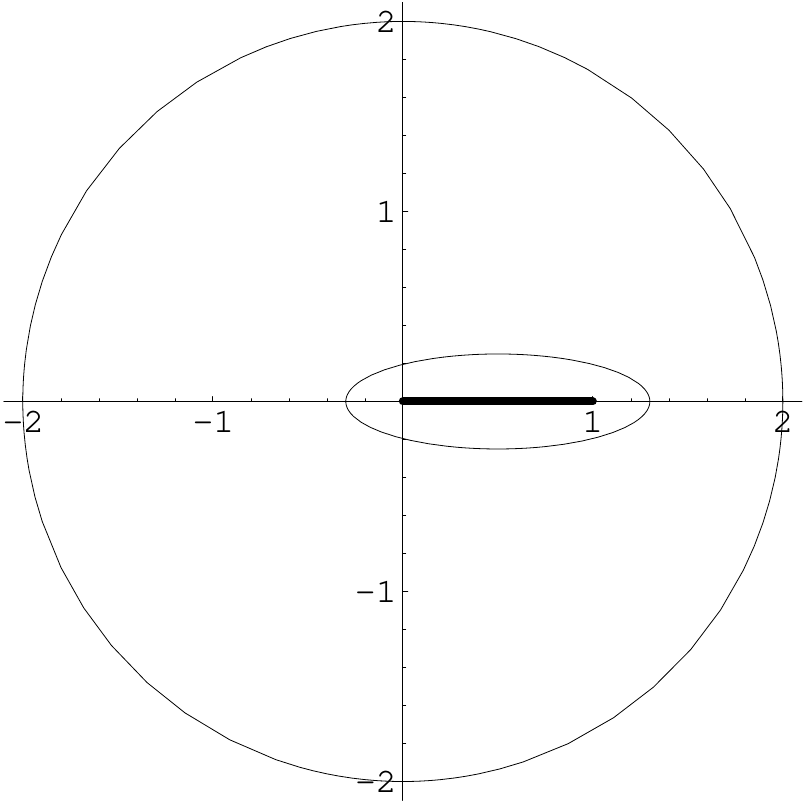, width=2in}}
\caption{The contours of integration. $\gamma_\infty$ is the
 circle of a large radius, and $\gamma_{[0,1]}$ is the thin
loop surrounding the closed interval $[0,1]$. }
\label{fig:contour}
\end{figure}

\begin{Def}
For a Laurent series $\sum_{n\in \mathbb{Z}} a_n t^n$,
we denote
$$
\left[\sum_{n\in \mathbb{Z}} a_n t^n\right]_+ = \sum_{n\ge 0} 
a_n t^n\;.
$$
\end{Def}

\begin{thm}
\label{thm:residuecalculation}
 In terms of the primitives
$\hat{\xi}_n(t)$,  we have
\begin{multline}
\label{eq:rab}
R_{a,b}(t)=
-\frac{1}{2\pi i}
\oint_{\gamma_\infty} K(t',t) \xi_a(t')\xi_b\big(s(t')\big)\\
=
\frac{1}{2}\left[ \frac{ts(t)}{t-s(t)}\Big(
\xi_a(t)\hat{\xi}_{b+1}\big(s(t)\big)+\hat{\xi}_{a+1}\big(s(t)\big)\xi_b(t)
\Big)\right]_+ .
\end{multline}
Similarly, we have
\begin{multline}
\label{eq:rn}
R_{n}(t,t_i)=
-\frac{1}{2\pi i}
\oint_{\gamma_\infty} K(t',t) \Big(\xi_n(t') B\big(s(t'), t_i\big)
+\xi_n\big(s(t')\big)B(t',t_i)\Big)\\
=
\left[\frac{ts(t)}{t-s(t)}\Big(
\hat{\xi}_{n+1}(t)B\big(s(t),t_i\big)+ \hat{\xi}_{n+1}\big(s(t)\big)
B(t,t_i)\Big)
\right]_+ .
\end{multline}
\end{thm}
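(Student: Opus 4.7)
The plan is to compute the contour integrals by deforming $\gamma_\infty$ inward, extracting residues at the finite poles of the integrand, and accounting for the branch cut along $[0,1]$ introduced by the involution $s(t')$. For $R_{a,b}(t)$, the integrand $K(t',t)\xi_a(t')\xi_b(s(t'))$ is meromorphic on $\mathbb{C}\setminus[0,1]$ with simple poles only at $t'=t$ and $t'=s(t)$, coming from the two linear factors $\frac{t'}{t'-t}$ and $\frac{s(t')}{s(t')-t}$ in $K(t',t)$.

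First I would compute the residue at $t'=t$. Using the recursion $\hxi_{b+1}(s(t)) = s(t)^2(s(t)-1)\,\tfrac{d}{dt}\hxi_b(s(t))$, which is just (\ref{eq:hxi-recursion}) evaluated at $s(t)$, together with the differential relation (\ref{eq:stdifferentialrelation}) giving $s'(t) = s(t)^2(s(t)-1)/[t^2(t-1)]$, a direct substitution yields $-\tfrac{1}{2}\tfrac{ts(t)}{t-s(t)}\,\xi_a(t)\,\hxi_{b+1}(s(t))$. The residue at $t'=s(t)$ is then handled by the parallel computation, using $s(s(t))=t$, $s'(s(t))=1/s'(t)$, and the kernel symmetry (\ref{eq:kernelrelation}), producing $-\tfrac{1}{2}\tfrac{ts(t)}{t-s(t)}\,\hxi_{a+1}(s(t))\,\xi_b(t)$. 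Summing these two contributions already reproduces the bracketed expression appearing in (\ref{eq:rab}) without the $[\,\cdot\,]_+$, so the remaining task is to prove that the contour integral $-\tfrac{1}{2\pi i}\oint_{\gamma_{[0,1]}}$ around the branch cut cancels precisely the non-polynomial part and yields the $[\,\cdot\,]_+$ operation.

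The key observation making this identification natural is that $R_{a,b}(t)$ must be polynomial in $t$ for a structural reason independent of the branch-cut calculation: for $|t'|$ sufficiently large the integrand admits a convergent Laurent expansion in $1/t'$ obtained by combining $\frac{t'}{t'-t}=\sum_{n\ge 0}(t/t')^n$, the analogous series for $\frac{s(t')}{s(t')-t}$, and the Laurent expansion (\ref{eq:s}) for $s(t')$ at infinity. The coefficient of $1/t'$, which is what $-\tfrac{1}{2\pi i}\oint_{\gamma_\infty}(\,\cdot\,)\,dt'$ extracts when reinterpreted as $\Res_{t'=\infty}$, is manifestly a finite polynomial in $t$. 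Combining this polynomial output with the explicit residue sum computed above forces the branch-cut contribution to equal precisely the negative of the principal part (in the $t$-Laurent expansion at $\infty$) of $\tfrac{1}{2}\tfrac{ts(t)}{t-s(t)}\big(\xi_a(t)\hxi_{b+1}(s(t))+\hxi_{a+1}(s(t))\xi_b(t)\big)$, which is equivalent to the identity (\ref{eq:rab}). The computation for $R_n(t,t_i)$ in (\ref{eq:rn}) proceeds in parallel, except that the integrand acquires two additional simple poles at $t'=t_i$ from $B(t',t_i)$ and $t'=s(t_i)$ from $B(s(t'),t_i)$. The residues at these four points, organized according to which of the two summands $\xi_n(t')B(s(t'),t_i)$ or $\xi_n(s(t'))B(t',t_i)$ they originate from, produce the two terms $\frac{\hxi_{n+1}(t)\,ds(t)}{s(t)-t_i}$ and $\frac{\hxi_{n+1}(s(t))\,dt}{t-t_i}$, with the outer $d_{t_i}$ emerging from the $dt_i$-dependence in $B(\,\cdot\,,t_i)$.

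The main obstacle I anticipate is justifying rigorously that $\oint_{\gamma_{[0,1]}}$ contributes exactly the principal part in $t$ and no additional polynomial correction; this would require controlling the logarithmic singularities of $s(t')$ at $t'=0,1$ across the branch cut. A cleaner alternative route is to bypass the branch-cut analysis entirely by interpreting $-\tfrac{1}{2\pi i}\oint_{\gamma_\infty}$ directly as $\Res_{t'=\infty}$ and matching Laurent expansions around $t=\infty$ on both sides of (\ref{eq:rab}) and (\ref{eq:rn}); since the outputs are polynomial in $t$ by construction and the explicit residue sum pins down the full expression $\tfrac{ts(t)}{t-s(t)}[\,\cdots\,]$ before truncation, the equality of polynomial parts follows.
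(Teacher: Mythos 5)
Your strategy is the same as the paper's: split $\gamma_\infty$ into $(\gamma_\infty-\gamma_{[0,1]})+\gamma_{[0,1]}$, pick up the contributions at $t'=t$ and $t'=s(t)$ (the paper extracts the second one by substituting $t'\mapsto s(t')$ and using the involution property and (\ref{eq:stdifferentialrelation}), rather than by a direct residue computation at $t'=s(t)$, but the outcome is identical), observe that $R_{a,b}(t)$ is a polynomial in $t$ because it is the coefficient of $z^{-1}$ in the $z=-1/t'$ expansion (\ref{eq:kernel-in-z}) of the integrand, and conclude that the $[\,\cdot\,]_+$ truncation absorbs the branch-cut piece. The treatment of $R_n(t,t_i)$ is likewise parallel in both arguments.

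The one step you explicitly leave open --- that $\oint_{\gamma_{[0,1]}}$ contributes nothing to the polynomial part in $t$ --- is precisely where the paper does its only real work, and it is easier than you anticipate: no control of the logarithmic singularities of $s$ at $0$ and $1$ is needed, because $\gamma_{[0,1]}$ is a compact loop that stays \emph{away} from the slit $[0,1]$, so the factor $\frac{t's(t')}{s(t')-t'}\,t'^2(t'-1)s'(t')f_a(t')f_b\big(s(t')\big)$ is holomorphic in a neighborhood of $\gamma_{[0,1]}$ and hence bounded there by some constant $M$. The only $t$-dependence of the integrand sits in $\frac{1}{t'-t}-\frac{1}{s(t')-t}$, which is $O(1/|t|)$ uniformly for $t'\in\gamma_{[0,1]}$ as $|t|\to\infty$; hence the branch-cut integral is $O(1/|t|)$ and has vanishing polynomial part, which is exactly what forces $R_{a,b}(t)=[\cdots]_+$. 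Note that your proposed ``cleaner alternative'' of matching Laurent expansions at $t'=\infty$ does not bypass this point: relating the residue at infinity to the contributions at $t'=t$ and $t'=s(t)$ still requires accounting for the cut, so the same decay estimate is unavoidable. With that estimate supplied, your argument coincides with the paper's proof.
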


\begin{proof} 
In terms of the original $z$-coordinate
of \cite{BM}, the residue $R_{a,b}(t)$
 is simply 
the coefficient of $z^{-1}$ in $K(t',t) \xi_a(t')\xi_b\big(s(t')\big)$,
after expanding it in the Laurent series in $z$. Since $\xi_n(t')$ is a
polynomial in $t'=-\frac{1}{z}$, the contribution to the
$z^{-1}$ term in the expression is a polynomial in $t$ because
of the $z$-expansion formula (\ref{eq:kernel-in-z})
for the kernel $K$. Thus we know that $R_{a,b}(t)$ is a polynomial
in $t$.

Let us write $\xi_n(t) = f_n(t)dt$, and 
let $\gamma_{[0,1]}$ be a positively oriented loop containing
the slit $[0,1]$, as in Figure~\ref{fig:contour}.
 On this compact set we have a bound
$$
\left|\frac{ts(t)}{t-s(t)}\;t^2(t-1) s'(t)f_a(t)f_b(s(t))\right|
< M,
$$
since the function is holomorphic outside $[0,1]$. Choose $|t|>>1$.
Then we have
\begin{align*}
&\left|
-\frac{1}{2\pi i}
\oint_{\gamma_{[0,1]}} K(t',t) \xi_a(t')\xi_b\big(s(t')\big)\right|\\
&= \left|
-\frac{1}{2\pi i}
\oint_{\gamma_{[0,1]}} \frac{1}{2} \left(
\frac{1}{t' - t} - \frac{1}{s(t') -t}
\right)   \frac{t's(t')}{s(t')-t'}\;
t'^2(t'-1)s'(t')  f_a(t') f_b\big(s(t')\big)dt'\right|
\tensor dt\\
&<
\frac{M}{2\pi}\oint_{\gamma_{[0,1]}} \frac{1}{2} \left|
\frac{1}{t' - t} - \frac{1}{s(t') -t}
\right|dt'\tensor dt 
\sim \frac{M}{4\pi |t|}dt\;.
\end{align*}
Therefore, 
\begin{align*}
-\frac{1}{2\pi i}
\oint_{\gamma_\infty} K(t',t) \xi_a(t')\xi_b\big(s(t')\big)
&=-\frac{1}{2\pi i}
\oint_{\gamma_\infty-\gamma_{[0,1]}} K(t',t) \xi_a(t')\xi_b\big(s(t')\big)
+{O}(t^{-1})\\
&=
\left[
-\frac{1}{2\pi i}
\oint_{\gamma_\infty-\gamma_{[0,1]}} K(t',t) \xi_a(t')\xi_b\big(s(t')\big)
\right]_+\;.
\end{align*}
Noticing the relation (\ref{eq:stdifferentialrelation}) and the 
fact that $s(t)$ is an involution, we obtain
\begin{align*}
&-\frac{1}{2\pi i}
\oint_{\gamma_\infty-\gamma_{[0,1]}} K(t',t) \xi_a(t')\xi_b\big(s(t')\big)\\
& =
-\frac{1}{2\pi i}
\oint_{\gamma_\infty-\gamma_{[0,1]}} \frac{1}{2} \left(
\frac{1}{t' - t} - \frac{1}{s(t') -t}
\right)   \frac{t's(t')}{s(t')-t'}\;
t'^2(t'-1)s'(t')  f_a(t') f_b\big(s(t')\big)dt'
\tensor dt\\
& =
-\frac{1}{2\pi i}
\oint_{\gamma_\infty-\gamma_{[0,1]}} \frac{1}{2} \cdot
\frac{1}{t' - t}  \cdot \frac{t's(t')}{s(t')-t'}\cdot
t'^2(t'-1)s'(t')  f_a(t') f_b\big(s(t')\big)dt'
\tensor dt\\
& \qquad +
\frac{1}{2\pi i}
\oint_{s(\gamma_\infty)-s(\gamma_{[0,1]})} \frac{1}{2} \cdot
 \frac{1}{s(t') -t}\cdot
   \frac{t's(t')}{s(t')-t'}\cdot
t'^2(t'-1)  f_a(t') f_b\big(s(t')\big)ds(t')
\tensor dt\\
&=
\frac{1}{2} \cdot
\frac{ts(t)}{t-s(t)}\cdot
t^2(t-1)s'(t)  f_a(t) f_b\big(s(t)\big)
 dt\\
&\qquad + 
 \frac{1}{2} \cdot
   \frac{s(t)t}{t-s(t)}\cdot
s(t)^2\big(s(t)-1\big)  f_a\big(s(t)\big) f_b(t)
 dt\\
&=
\frac{ts(t)}{t-s(t)}\;  t^2(t-1)s'(t)\;
\frac{f_a(t)f_b(s(t))+f_a(s(t))f_b(t)}{2} \;dt\\
&=
\half\cdot \frac{ts(t)}{t-s(t)}
\Big(
\xi_a(t)\hat{\xi}_{b+1}\big(s(t)\big) 
+ \hat{\xi}_{a+1}\big(s(t)\big)\xi_b(t)
\Big)\\
&=
\half\cdot \frac{ts(t)}{t-s(t)}
\Big(
\hat{\xi}' _a(t)\hat{\xi}_{b+1}\big(s(t)\big) 
+ \hat{\xi}_{a+1}\big(s(t)\big)\hat{\xi}' _b(t)
\Big) \; dt
\\
&=
\half\cdot \frac{ts(t)}{t-s(t)}
\Big(
\hat{\xi}_{a+1}(t)\hat{\xi}_{b+1}\big(s(t)\big) 
+ \hat{\xi}_{a+1}\big(s(t)\big)\hat{\xi}_{b+1}(t)
\Big) \frac{dt}{t^2(t-1)}.
\end{align*}
Here we used (\ref{eq:hxi-recursion}) and (\ref{eq:stdifferentialrelation})
at the last step.
The proof of the second residue formula is exactly the same. 
\end{proof}

\begin{rem}
The equation for the kernel (\ref{eq:kernelrelation}) implies
\begin{equation*}
R_{a,b}(t)=R_{b,a}(t)= -\left[R_{a,b}\big(s(t)\big)\right]_+.
\end{equation*}
\end{rem}

Let us define polynomials $P_{a,b}(t)$ and $P_n(t,t_i)$ by
\begin{align}
\label{eq:Pab}
&P_{a,b}(t)dt =\half \left[ \frac{ts(t)}{t-s(t)}\;
\frac{dt}{t^2(t-1)}\Big(
\hat{\xi}_{a+1}(t)\hat{\xi}_{b+1}\big(s(t)\big)
+\hat{\xi}_{a+1}\big(s(t)\big)\hat{\xi}_{b+1}(t)
\Big)
\right]_+,\\
\label{eq:Pn}
&P_n(t,t_i) dt\tensor dt_i=d_{t_i}\left[\frac{ts(t)}{t-s(t)}
\left(\frac{\hat{\xi}_{n+1} (t)ds(t)}{s(t)-t_i} 
+ \frac{\hat{\xi}_{n+1}\big(s(t)\big)dt}
{t-t_i}\right)\right]_+.
\end{align}
Obviously $\deg P_{a,b}(t) = 2(a+b+2)$. 
To calculate  $P_n(t,t_i)$, we use
the Laurent series expansion 
\begin{equation}
\label{eq:Laurent}
\frac{1}{t-t_i} = \frac{1}{t}\; \sum_{k= 0} ^\infty
\left(\frac{t_i}{t}\right)^k,
\end{equation}
and take the polynomial part in $t$. We note that it is
automatically a polynomial in $t_i$ as well.
 We thus see that $\deg P_n(t,t_i) = 2n+2$ in
 each variable.

\begin{thm}
The topological recursion formula
{\rm{(\ref{eq:recursion})}}
is equivalent to  the
following 
equation of symmetric differential forms in 
$\ell+1$ variables with polynomial coefficients:
\begin{multline*}
\sum_{n,n_L}\la \tau_n\tau_{n_L} \Lambda_g ^\vee (1) \ra_{g,\ell+1}
 d \hxi_n(t) \tensor
d \hxi_{n_L}(t_L)\\
=
\sum_{i=1} ^\ell
\sum_{m,n_{L\setminus\{i\}}}\la \tau_m\tau_{n_{L\setminus\{i\}}} \Lambda_{g} ^\vee (1) \ra_{g,\ell}
P_{m}(t,t_i)
dt\tensor dt_i\tensor
d \hxi_{n_{L\setminus\{i\}}}(t_{L\setminus\{i\}})\\
+
\Bigg(
\sum_{a,b,n_L}\la \tau_a\tau_b\tau_{n_L} \Lambda_{g-1} ^\vee (1) \ra_{g-1,\ell+2}
\\
+
\sum_{\substack{g_1+g_2=g\\
I\sqcup J=L}} ^{\rm{stable}}
\sum_{\substack{a,n_I\\b,n_J}}\la \tau_a\tau_{n_I}
\Lambda_{g_1} ^\vee (1) \ra_{g_1,|I|+1}\la \tau_b\tau_{n_J}
\Lambda_{g_2} ^\vee (1) \ra_{g_2,|J|+1}
\Bigg)
P_{a,b}(t)dt\tensor  d \hxi_{n_L}(t_L).
\end{multline*}
Here $L=\{1,2\dots,\ell\}$ is an index set, and for a subset 
$I\subset L$, we denote
$$
t_I = (t_i)_{i\in I},\quad
	 n_I = \{\, n_i\,|\, i\in I\,\},\quad 
	\tau_{n_I} = \prod_{i\in I}\tau_{n_i},
	\quad
		d\hxi_{n_I}(t_I) = \bigotimes_{i\in I}
		\frac{d}{dt_i}\hxi_{n_i}(t_i)dt_i.
$$
The last summation in the formula
 is taken over all partitions of $g$ and 
decompositions  of $L$ into 
disjoint subsets $I\sqcup J=  L$ subject to the stability 
condition $2g_1 - 1 + |I| >0$ and
$2 g_2 -1 +|J| >0$.
\end{thm}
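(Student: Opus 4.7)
The plan is a direct substitute-and-compute argument that converts the residue-form recursion (\ref{eq:recursion}) term-by-term into its algebraic polynomial counterpart, using Theorem \ref{thm:residuecalculation} as the main engine. Since both sides of the claimed equivalence are linear in the Hurwitz-differential coefficients $\langle \tau_{n_L}\Lambda_g^\vee(1)\rangle$, it suffices to expand every $W_{g',\ell'}$ occurring in (\ref{eq:recursion}) in the $\xi$-basis of Definition \ref{Def:Hdiff} and verify that the residue of each resulting monomial integrand is precisely the corresponding monomial on the right of the target equation.

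First I would classify the integrands on the right of (\ref{eq:recursion}) into two types. The $W_{g-1,\ell+2}\big(t,s(t),t_L\big)$ term and each stable splitting $W_{g_1,|I|+1}(t,t_I)\otimes W_{g_2,|J|+1}(s(t),t_J)$ contribute integrands whose $t'$-dependence is a product $\xi_a(t')\xi_b(s(t'))$, multiplied by $\bigotimes_{j}\xi_{n_j}(t_j)$ factors that are $t'$-independent and pass through the contour integration untouched. The mixed Bergman terms $W_{g,\ell}(t,t_{L\setminus\{i\}})\otimes B(s(t),t_i) + W_{g,\ell}(s(t),t_{L\setminus\{i\}})\otimes B(t,t_i)$ contribute integrands of the precise symmetrized form appearing in (\ref{eq:rn}). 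Theorem \ref{thm:residuecalculation} then evaluates every one of these integrals in closed form as $R_{a,b}(t)$ or $R_m(t,t_i)$.

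Next I would identify $R_{a,b}$ with $P_{a,b}(t)\,dt$ and $R_m$ with $P_m(t,t_i)\,dt\otimes dt_i$. This is immediate from (\ref{eq:hxi-recursion}) rewritten as $\xi_n(t) = \hxi_{n+1}(t)\,dt/\big(t^2(t-1)\big)$: substituting this into (\ref{eq:rab}) and (\ref{eq:rn}) and comparing with (\ref{eq:Pab}) and (\ref{eq:Pn}) gives the identifications. After this substitution, the right-hand side of (\ref{eq:recursion}) becomes exactly the right-hand side of the displayed polynomial equation, while the left-hand side $W_{g,\ell+1}(t,t_L)$ in the Hurwitz-differential ansatz equals $\sum \langle \tau_n\tau_{n_L}\Lambda_g^\vee(1)\rangle_{g,\ell+1}\,d\hxi_n(t)\otimes d\hxi_{n_L}(t_L)$. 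Because the rewriting is reversible at each step, equivalence of the two formulations follows in both directions.

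The main obstacle I foresee is the combinatorial bookkeeping of prefactors. In the $W_{g-1,\ell+2}$ contribution the free sum over $(a,b)$ combined with the symmetry of the correlator and the identity $R_{a,b} = R_{b,a}$ from the remark following Theorem \ref{thm:residuecalculation} must absorb the factor $\tfrac{1}{2}$ appearing in (\ref{eq:Pab}); similarly for the stable splitting, the ordered sum $\sum_{g_1+g_2=g,\,I\sqcup J=L}$ supplies the symmetrization in $(a,b)$ that meshes with the $\tfrac12$ in the definition of $P_{a,b}$. For the Bergman-kernel terms, the two explicit summands $W_{g,\ell}(t,\cdots)\otimes B(s(t),t_i) + W_{g,\ell}(s(t),\cdots)\otimes B(t,t_i)$ in (\ref{eq:recursion}) are exactly the symmetrization already built into (\ref{eq:rn}), so no additional symmetrization or extra factor arises. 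Verifying that these three matchings are consistent is the only nontrivial check beyond the direct application of Theorem \ref{thm:residuecalculation}.
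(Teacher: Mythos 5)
Your proposal is correct and is essentially the paper's own (implicit) argument: the paper states this theorem with no separate proof precisely because it is an immediate consequence of Theorem~\ref{thm:residuecalculation}, obtained by expanding each $W_{g',\ell'}$ in the $\xi$-basis of Definition~\ref{Def:Hdiff}, applying (\ref{eq:rab}) and (\ref{eq:rn}) monomial by monomial, and reading off $R_{a,b}(t)=P_{a,b}(t)\,dt$ and $R_m(t,t_i)=P_m(t,t_i)\,dt\tensor dt_i$ from (\ref{eq:Pab})--(\ref{eq:Pn}) via $\xi_{a}(t)=\hxi_{a+1}(t)\,dt/\bigl(t^{2}(t-1)\bigr)$. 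The only inaccuracy is in your final paragraph: the factor $\tfrac12$ in (\ref{eq:Pab}) does not need to be ``absorbed'' by any symmetrization of the $(a,b)$-sum, since it is already present in the residue $R_{a,b}$ of each individual monomial $\xi_a(t')\xi_b\bigl(s(t')\bigr)$ and persists verbatim inside $P_{a,b}$ in the target formula, so the matching is literally term-by-term with no combinatorial prefactor left to track.
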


\begin{rem}
An immediate
 observation we can make from (\ref{eq:BMalgebraic})
is the simple form of the formula for the case with one marked
point:
\begin{multline}
\label{eq:BMell=1} 
\sum_{n\le 3g-2} \la \tau_n \Lambda_g ^\vee(1)\ra_{g,1}
 \frac{d}{dt}\hxi_n(t)\\
=\sum_{a+b\le 3g-4} \Big(
\la \tau_a\tau_b \Lambda_{g-1} ^\vee(1)\ra_{g-1,2} 
+\sum_{g_1+g_2=g} ^{\text{stable}}\la \tau_a \Lambda_{g_1} ^\vee(1)\ra_{g_1,1}
\la \tau_b\Lambda_{g_2} ^\vee(1)\ra_{g_2,1}
\Big)P_{a,b}(t) .
\end{multline}
\end{rem}

\section{Analysis of the Laplace transforms on the Lambert curve}
\label{sect:Laplace}

As a preparation for Section~\ref{sect:proof} where we
give a proof of (\ref{eq:BMalgebraic}), 
in this section we present analysis tools that
provide the relation among  the Laplace transforms on
the Lambert curve (\ref{eq:Lambert-tw}).
The mystery of the work of Bouchard-Mari\~no \cite{BM}
lies in their $\zeta_n(y)$-forms that play an effective role
in devising the topological recursion for the Hurwitz 
numbers. We have already identified these differential forms as
polynomial forms $d\hxi_n(t)$, where $\hxi_n(t)$'s
are the Lambert $W$-function
and its derivatives.

 Recall Stirling's formula
\begin{multline}
\label{eq:Stirling}
\log \Gamma(z) 
=  \frac{1}{2}\log 2\pi
+\left(z-\half\right) \log z - z  
\\
+\sum_{r=1} ^m \frac{B_{2r}}{2r(2r-1)} z^{-2r+1}
-\frac{1}{2m} \int_0 ^\infty 
\frac{B_{2m}(x-[x])}{(z+x)^{2m}}dx,
\end{multline}
where $m$ is an arbitrary cut-off parameter, 
$B_{r}(s)$ is the Bernoulli polynomial defined by
$$
\frac{z e^{zx}}{e^z-1} = \sum_{n=0} ^\infty B_r(x) \;\frac{z^r}
{r!},
$$
$B_r = B_r(0)$ is the Bernoulli number,
 and   $[x]$ is the largest integer
not exceeding $x\in \bR$.
For  $N>0$, we have 
\begin{align*}
e^{-N}\frac{N^{N+n}}{N!} 
&= \frac{1}{\sqrt{2\pi}} N^{n-\frac{1}{2}}
\exp\left(
-\sum_{r=1} ^m \frac{B_{2r}}{2r(2r-1)} N^{-2r+1}
\right)
\exp\left(
\frac{1}{2m} \int_0 ^\infty 
\frac{B_{2m}(x-[x])}{(N+x)^{2m}}dx
\right).
\end{align*}
Let us define the coefficients $s_k$ for $k\ge 0$ by
\begin{multline}
\label{eq:sk}
\sum_{k=0} ^\infty s_k N^{-k} = 
\exp\left(
-\sum_{r=1} ^\infty \frac{B_{2r}}{2r(2r-1)} N^{-2r+1}
\right)\\
= 
1-\frac{1}{12} N^{-1} + \frac{1}{288} N^{-2} + \frac{139}{51840}
N^{-3}-\frac{571}{2488320} N^{-4}+\cdots .
\end{multline}
Then for a large $N$ 
we have an  asymptotic expansion
\begin{equation}
\label{eq:N-asymptotic}
e^{-N}\frac{N^{N+n}}{N!} \sim
\frac{1}{\sqrt{2\pi}}  N^{n-\frac{1}{2}} \sum_{k=0} ^\infty
s_k N^{-k}.
\end{equation}

 \begin{Def}
Let us introduce an infinite sequence of Laurent series
\begin{equation}
\label{eq:eta_n}
\eta_n(v) = 
\frac{1}{v}\;
\sum_{k=0} ^\infty 
s_k \;\frac{\big(2(n-k)-1\big)!!}{v^{2(n-k)}} 
=-\eta_n(-v) 
\end{equation}
for every $n\in \bZ$, where $s_k$'s are the coefficients
defined in {\rm{(\ref{eq:sk})}}.
\end{Def}

The following lemma relates the polynomial forms
$\xi_n(t)=d\hxi_n(t)$, the functions $\eta_n(v)$, and 
the Laplace transform.

\begin{prop} 
\label{prop:LT=eta}
For $n\ge 0$, we have 
\begin{equation}
\label{eq:LT=eta}
 \int_{0} ^\infty
e^{-s}\frac{s^{s+n}}{\Gamma(s+1)} e^{-sw}ds 
=
 \eta_n(v) +\const + O(w)
\end{equation}
with respect to the choice of the branch of $\sqrt{w}$ specified 
by $v=-\sqrt{2w}$ as in
{\rm{(\ref{eq:w-in-t})}} and 
{\rm{(\ref{eq:v(t)})}}, 
where $O(w)$ denotes a holomorphic function in $w = \half v^2$ 
defined around $w=0$ which vanishes at $w=0$. 
The
substitution of {\rm{(\ref{eq:v(t)})}}
 in $\eta_n(v)$
yields
\begin{equation}
\label{eq:eta=xi}
\eta_n (v) 
=
\half\bigg(\eta_n(v)-\eta_n(-v)\bigg)
=
\half\bigg(
\hat{\xi}_n(t)- \hat{\xi}_n\big(s(t)\big) \bigg),
\end{equation}
where $s(t)$ is the involution  of {\rm{(\ref{eq:s})}}. 
This formula is valid for $n\ge -1$, and in particular, 
we have a relation between the kernel and $\eta_{-1}(v)$:
\begin{equation}
\label{eq:eta-1}
\eta_{-1}(v) = \half\left(
\hat{\xi}_{-1}(t)-\hat{\xi}_{-1}\big(s(t)\big)
\right)
=\half\;\frac{t-s(t)}{ts(t)}.
\end{equation}
More precisely, for $n\ge -1$, we have
\begin{equation}
\label{eq:eta=xi+F}
\begin{cases}
\eta_{n}(v) =\hat{\xi}_{n}(t) +F_n(w)\\
\eta_{n}(-v) =\hat{\xi}_{n}\big(s(t)\big) +F_n(w)
\end{cases},
\end{equation}
where $F_n(w)$ is a holomorphic function in $w$. 
\end{prop}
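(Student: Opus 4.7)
The proposition makes three closely related assertions: the Laplace transform identity (\ref{eq:LT=eta}), the antisymmetric splitting (\ref{eq:eta=xi}) together with the explicit $n=-1$ formula (\ref{eq:eta-1}), and the stronger refinement (\ref{eq:eta=xi+F}). The plan is to establish them in the order (\ref{eq:LT=eta}), then (\ref{eq:eta=xi+F}), and finally (\ref{eq:eta=xi}) as an immediate corollary of the manifest oddness of $\eta_n$. The two main technical inputs are Stirling's expansion (\ref{eq:Stirling})--(\ref{eq:sk}) for the first part, and the polynomial recursion (\ref{eq:hxi-recursion}) together with its $\eta$-counterpart for the second.

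For (\ref{eq:LT=eta}), I would substitute the truncated Stirling expansion
\[
e^{-s}\frac{s^{s+n}}{\Gamma(s+1)} = \frac{1}{\sqrt{2\pi}}\, s^{n-1/2}\sum_{k=0}^{K} s_k\, s^{-k} + R_K(s),
\]
with $R_K(s) = O(s^{n-K-3/2})$ as $s\to\infty$, into the integrand and split $\int_0^\infty = \int_0^1 + \int_1^\infty$. The piece $\int_0^1(\cdots)e^{-sw}ds$ is entire in $w$ because the integrand is bounded on $[0,1]$. For the tail, term-wise integration produces $\int_1^\infty s^{n-k-1/2} e^{-sw} ds = \Gamma(n-k+\tfrac{1}{2})\, w^{-(n-k+1/2)}$ modulo an entire correction from $\int_0^1 s^{n-k-1/2}e^{-sw}ds$, and the identities $\Gamma(m+\tfrac{1}{2}) = \sqrt{\pi}\,(2m-1)!!/2^m$ and $w^{-(m+1/2)} \propto v^{-(2m+1)}$ (with the sign determined by the branch $v=\pm\sqrt{2w}$ specified in the statement) collect these contributions into exactly the Laurent series defining $\eta_n(v)$ in (\ref{eq:eta_n}). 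The integrated remainder $\int_1^\infty R_K(s)e^{-sw}\, ds$ is holomorphic in a neighborhood of $w=0$ once $K$ is chosen larger than $n+\tfrac{1}{2}$. Summing these contributions yields $\eta_n(v)$ plus a constant plus a term vanishing at $w=0$.

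For (\ref{eq:eta=xi+F}), I use induction on $n\ge -1$. In the base case $n=-1$, the closed form $\hat{\xi}_{-1}(t) = 1-1/t$ gives
\[
\hat{\xi}_{-1}(t) - \tfrac{1}{2}\bigl(\hat{\xi}_{-1}(t) - \hat{\xi}_{-1}(s(t))\bigr) = 1 - \frac{t+s(t)}{2\, t\, s(t)},
\]
and Laurent expansion using (\ref{eq:v(t)}) and (\ref{eq:s}) yields $t+s(t)=\tfrac{2}{3}+O(w)$ and $2ts(t)=-1/w+O(1)$, so the apparent pole at $w=0$ cancels and the difference is a function of $w$ holomorphic at $w=0$. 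This furnishes $F_{-1}(w)$ and, read the other way, identifies $\tfrac{1}{2}(\hat{\xi}_{-1}(t)-\hat{\xi}_{-1}(s(t)))$ with $(t-s(t))/(2ts(t))$, verifying (\ref{eq:eta-1}). For the inductive step I apply $-\partial_w = t^2(t-1)\partial_t$, which is valid by (\ref{eq:dw}), to both sides of $\eta_n(v) = \hat{\xi}_n(t) + F_n(w)$. By (\ref{eq:hxi-recursion}) this operation sends $\hat{\xi}_n\mapsto\hat{\xi}_{n+1}$; by a direct check on (\ref{eq:eta_n}) using the alternative form $-\partial_w = -v^{-1}\partial_v$ it sends $\eta_n\mapsto\eta_{n+1}$; and it preserves holomorphicity in $w$, so $F_{n+1}(w) := -\partial_w F_n(w)$ is again holomorphic and the induction closes.

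The final assertion (\ref{eq:eta=xi}) then follows immediately from (\ref{eq:eta=xi+F}): the involution (\ref{eq:v(s)}) gives $v(s(t))=-v(t)$ while $w(s(t))=w(t)$, so replacing $t$ by $s(t)$ in $\eta_n(v)=\hat{\xi}_n(t)+F_n(w)$ yields $\eta_n(-v)=\hat{\xi}_n(s(t))+F_n(w)$; subtracting cancels $F_n(w)$ and, combined with $\eta_n(v)-\eta_n(-v) = 2\eta_n(v)$ read off from (\ref{eq:eta_n}), produces (\ref{eq:eta=xi}). The main obstacle across the whole argument is controlling the holomorphicity-in-$w$ claims at $w=0$: in Step 1 this requires dominating the Stirling remainder $R_K(s)$ uniformly for $s\ge 1$ by an $e^{-sw}$-integrable function with $w$ in a neighborhood of $0$, and in Step 2 it requires tracking that the apparent $v^{-2k}$ singularities appearing individually in $\hat{\xi}_n(t)$ and $\hat{\xi}_n(s(t))$ when expanded in the local coordinate at $t=\infty$ cancel inside the symmetric combination. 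Neither step is conceptually deep, but both require explicit Laurent-series bookkeeping rather than purely formal manipulation.
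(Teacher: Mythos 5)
Your treatment of (\ref{eq:LT=eta}) follows essentially the paper's route (truncated Stirling expansion, splitting $\int_0^\infty=\int_0^1+\int_1^\infty$, term-wise integration against $e^{-sw}$), but one claim in it is false as stated: $\int_1^\infty R_K(s)e^{-sw}\,ds$ is \emph{not} holomorphic near $w=0$ for any choice of $K$. For a non-integer exponent $a$ the integral $\int_1^\infty s^a e^{-sw}\,ds$ always carries a $\Gamma(a+1)\,w^{-(a+1)}$ branch contribution in addition to an entire part, so the remainder contributes odd powers of $v$ of order $\geq 2(K-n)+1$. Since $\eta_n(v)$ in (\ref{eq:eta_n}) is an infinite Laurent series with a full positive-power tail, these contributions must be identified with that tail; the paper does this by running the recursion $\eta_{n+1}=-v^{-1}\partial_v\eta_n$ backwards from large $n$. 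This part of your argument is repairable with the bookkeeping you allude to at the end.

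The genuine gap is in your base case for (\ref{eq:eta=xi+F}). What you actually verify there is that the involution-symmetric part $\tfrac12\big(\hat{\xi}_{-1}(t)+\hat{\xi}_{-1}(s(t))\big)=1-\frac{t+s(t)}{2\,t\,s(t)}$ is holomorphic in $w$ at $w=0$, and that $\tfrac12\big(\hat{\xi}_{-1}(t)-\hat{\xi}_{-1}(s(t))\big)=\frac{t-s(t)}{2\,t\,s(t)}$; the latter is trivial algebra from $\hat{\xi}_{-1}(t)=1-1/t$, and neither statement mentions $\eta_{-1}(v)$. The substantive content of (\ref{eq:eta-1}), and hence of your base case, is that the odd part of $\hat{\xi}_{-1}(t)$ in the $v$-expansion coincides with the \emph{specific} series $\eta_{-1}(v)$ whose coefficients $s_k$ come from exponentiating the Bernoulli series (\ref{eq:sk}); your argument never links the discrete sum $\hat{\xi}_n(t)=\sum_k\frac{k^{k+n}}{k!}e^{-k(w+1)}$ to the integral whose asymptotics produce $\eta_n(v)$, so "read the other way" it only restates a tautology about the odd/even decomposition of $\hat{\xi}_{-1}$. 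The paper supplies exactly the missing bridge: it first extends the asymptotic formula to $n\leq-1$ for the truncated integral $\int_1^\infty$ (the full Laplace transform diverges there) by inverse induction on the recursion, and then invokes the Euler summation formula (\ref{eq:Euler}) to show that $\sum_{k\geq2}e^{-k}\frac{k^{k+n}}{k!}e^{-kw}$ and $\int_1^\infty e^{-s}\frac{s^{s+n}}{\Gamma(s+1)}e^{-sw}\,ds$ differ by a function holomorphic at $w=0$. Some sum-versus-integral comparison of this kind is indispensable; without it the induction has nothing to start from. Your inductive step (applying $-\partial_w=t^2(t-1)\partial_t$) and your deduction of (\ref{eq:eta=xi}) from (\ref{eq:eta=xi+F}) via the involution are correct and agree with the paper.
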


\begin{proof} 
From definition (\ref{eq:eta_n}),  it is obvious that 
the series  $\eta_n(v)$  satisfies the recursion relation
\begin{equation}
\label{eq:eta-recursion}
\eta_{n+1}(v) = -\frac{1}{v}\;\frac{d}{dv}\eta_n(v)
\end{equation}
for all $n\in\bZ$.
The integral (\ref{eq:LT=eta}) also satisfies
the same recursion for $n\ge 0$.  
So choose an $n\ge 0$. We have
 an estimate
$$
e^{-s}\frac{s^{s+n}}{\Gamma(s+1)}
=
\frac{1}{\sqrt{2\pi}} s^{n-\half} \sum_{k=0} ^n s_k s^{-k}
+ O(s^{-\frac{3}{2}})
$$
that is valid for $s>1$. Since the integral
\begin{equation*}
 \int_{0} ^1
e^{-s}\frac{s^{s+n}}{\Gamma(s+1)} e^{-sw}ds 
\end{equation*}
is an entire function in $w$, we have
\begin{align*}
&\int_{0} ^\infty
e^{-s}\frac{s^{s+n}}{\Gamma(s+1)} e^{-sw}ds 
=
 \int_{0} ^1
e^{-s}\frac{s^{s+n}}{\Gamma(s+1)} e^{-sw}ds 
+
 \int_{1} ^\infty
e^{-s}\frac{s^{s+n}}{\Gamma(s+1)} e^{-sw}ds 
\\
&=
 \int_{0} ^\infty \bigg(
\frac{1}{\sqrt{2\pi}} s^{n-\half} \sum_{k=0} ^n s_k s^{-k}
\bigg)
e^{-sw} ds
+ 
 \int_{1} ^\infty
O(s^{-\frac{3}{2}}) e^{-sw}ds  +\const + O(w)
\\
&=
\frac{1}{v}\;
\sum_{k=0} ^n 
s_k \;\frac{\big(2(n-k)-1\big)!!}{v^{2(n-k)}} + 
\const + vO(w) + O(w).
\end{align*}
This formula is valid for all $n\ge 0$. Starting it 
from a large  $n>>0$ and using the recursion 
(\ref{eq:eta-recursion}) backwards, we conclude that
the $vO(w)$ terms in the above formula are indeed the positive
power terms of $\eta_n(v)$. 
The principal part of $\eta_n(v)$ does not depend on the
addition of positive power terms in $w=\half v^2$, 
since $-\frac{1}{v}\frac{d}{dv}$ transforms
a positive even power of $v$ to a non-negative even power
and does not create any negative powers.
This proves (\ref{eq:LT=eta}).

Next let us estimate the holomorphic error term
$O(w)$ in (\ref{eq:LT=eta}).
When $n\le -1$, the Laplace transform (\ref{eq:LT=eta})
does not converge. However, the truncated  integral
$$
 \int_{1} ^\infty
e^{-s}\frac{s^{s+n}}{\Gamma(s+1)} e^{-sw}ds 
$$
always converges and defines a
holomorphic function in $v = - \sqrt{2w}$, which still satisfies 
the recursion relation (\ref{eq:eta-recursion}). 
Again by the inverse induction, we have
\begin{equation}
\label{eq:truncatedLT}
 \int_{1} ^\infty
e^{-s}\frac{s^{s+n}}{\Gamma(s+1)} e^{-sw}ds 
= \eta_n(v) + O(w)
\end{equation}
for every $n<0$.
Now by the Euler summation formula, for $n\le- 1$
and  $Re(w)>0$, we have
\begin{multline}
\label{eq:Euler}
\int_{1} ^\infty
e^{-s}\frac{s^{s+n}}{\Gamma(s+1)} e^{-sw}ds 
-\sum_{k=2} ^\infty e^{-k}\frac{k^{k+n}}{k!}e^{-kw}
\\
=
-\half e^{-(w+1)}
+
 \int_{1} ^\infty \left(s-[s]-\half\right)
 \frac{d}{ds}\left(
e^{-s}\frac{s^{s+n}}{\Gamma(s+1)} e^{-sw}\right)ds .
\end{multline}
Note that the RHS of (\ref{eq:Euler}) is holomorphic in $w$
around $w=0$.
From (\ref{eq:xihat}), (\ref{eq:truncatedLT}) and 
(\ref{eq:Euler}), we establish a comparison
formula 
\begin{equation}
\label{eq:eta=xi+F-1}
\eta_{-1}(v) -\hat{\xi}_{-1}(t) = F_{-1}(w),
\end{equation}
where $F_{-1}(w)$ is a holomorphic function in $w$
defined near $w=0$, and
we identify the coordinates $t, v$ and $w$ by
the relations  (\ref{eq:tvw}) and (\ref{eq:v(t)}).
Note that the relation  (\ref{eq:tvw}) is invariant
under the involution 
\begin{equation}
\label{eq:vt-involution}
\begin{cases}
v\longmapsto -v\\
t\longmapsto s(t) 
\end{cases} .
\end{equation}
Therefore, we also have  
\begin{equation*}
\eta_{-1}(-v) -\hat{\xi}_{-1}\big(s(t)\big) = F_{-1}(w).
\end{equation*}
Thus we obtain
\begin{equation*}
\eta_{-1}(v) =\half
\bigg(
\hat{\xi}_{-1}(t)- \hat{\xi}_{-1}\big(s(t)\big) \bigg),
\end{equation*}
which proves  (\ref{eq:eta-1}).
Since 
$-\frac{1}{v}\frac{d}{dv} = t^2(t-1)\frac{d}{dt}$,
the recursion relations (\ref{eq:eta-recursion}) and 
 (\ref{eq:hxi-recursion}) for
$\hat{\xi}_n(t)$ are exactly the same. 
We note that from (\ref{eq:stdifferentialrelation}) we have
$$
-\frac{1}{v}\frac{d}{dv} = t^2(t-1)\frac{d}{dt}
= s(t)^2 \big(s(t)-1\big) \frac{d}{ds(t)}.
$$
Therefore, the difference 
$\hat{\xi}_{n+1}(t)- \hat{\xi}_{n+1}\big(s(t)\big)$
satisfy the same recursion
\begin{equation}
\label{eq:xi-xi-recursion}
\hat{\xi}_{n+1}(t)- \hat{\xi}_{n+1}\big(s(t)\big) 
=
 t^2(t-1)\frac{d}{dt}
\bigg(
\hat{\xi}_n(t)- \hat{\xi}_n\big(s(t)\big) \bigg).
\end{equation}
The recursions (\ref{eq:eta-recursion}) and 
(\ref{eq:xi-xi-recursion}), together with the 
initial condition (\ref{eq:eta-1}), establish
(\ref{eq:eta=xi}).
Application of  the differential operator
$$
-\frac{1}{v}\;\frac{d}{dv} = -\frac{d}{dw} = 
t^2(t-1)\frac{d}{dt}
$$
$(n+1)$-times to  (\ref{eq:eta=xi+F-1}) yields
\begin{equation*}
\eta_{n}(v) -\hat{\xi}_{n}(t) = F_{n}(w),
\end{equation*}
where $F_n(w) = (-1)^n \frac{d^n}{dw^n}F_{-1}(w)$ is
a holomorphic function in $w$ around $w=0$. Involution
(\ref{eq:vt-involution}) then gives
\begin{equation*}
\eta_{n}(-v) -\hat{\xi}_{n}\big(s(t)\big) = F_{n}(w).
\end{equation*}
This completes the proof of the proposition.
\end{proof}

As we have noted in Section~\ref{sect:residue},
the residue calculations appearing in the Bouchard-Mari\~no
recursion formula (\ref{eq:recursion}) are essentially
evaluations of the
product of $\xi$-forms at the point $t$ and $s(t)$ on the
Lambert curve, if we truncate the result to the polynomial
part. In terms of the $v$-coordinate, these two 
points correspond to $v$ and $-v$. Thus we have

\begin{cor}
\label{cor:Pab}
The residue polynomials of {\rm{(\ref{eq:Pab})}}
are given by
\begin{multline}
\label{eq:Pab-in-eta}
P_{a,b}(t)dt
=
\half
 \left[ \frac{ts(t)}{t-s(t)}\;\frac{dt}{t^2(t-1)}
 \Big(
\hat{\xi}_{a+1}(t)\hat{\xi}_{b+1}\big(s(t)\big)
+\hat{\xi}_{a+1}\big(s(t)\big)\hat{\xi}_{b+1}(t)
\Big)
\right]_+\\
=
\half \left[\left.\frac{
\eta_{a+1}(v)\eta_{b+1}(v)}
{\eta_{-1}(v)}\;vdv\right|_{v=v(t)}
\right]_+,
\end{multline}
where 
the reciprocal of 
\begin{equation*}
\eta_{-1}(v) {=} 
\sum_{k=0} ^\infty 
s_k \big(2(-1-k)-1\big)!! \; v^{2k+1}
=-v\left(1+\sum_{k=1} ^\infty (-1)^{k}s_k \frac{1}{(2k+1)!!}
\;v^{2k}\right) 
\end{equation*}
is defined by
$$
\frac{1}{\eta_{-1}(v)} =- \frac{1}{v}\left(1+
\sum_{m=0} ^\infty 
\left(\sum_{k=1} ^\infty (-1)^{k-1}s_k \frac{1}{(2k+1)!!}
\;v^{2k}\right)^m
\right).
$$
\end{cor}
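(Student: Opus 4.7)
The plan is to convert the $t$--expression for $P_{a,b}(t)dt$ given in (\ref{eq:Pab}) into a $v$--expression by substituting the comparison formulas of Proposition~\ref{prop:LT=eta}. First I would use (\ref{eq:eta=xi+F}) to write
\[
\hat{\xi}_{a+1}(t)\hat{\xi}_{b+1}\bigl(s(t)\bigr)
+\hat{\xi}_{a+1}\bigl(s(t)\bigr)\hat{\xi}_{b+1}(t)
= -2\,\eta_{a+1}(v)\eta_{b+1}(v)+2\,F_{a+1}(w)F_{b+1}(w),
\]
using $\eta_n(-v)=-\eta_n(v)$; the cross terms linear in $F_n$ cancel. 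Next I would use (\ref{eq:eta-1}) to rewrite $\dfrac{ts(t)}{t-s(t)}=\dfrac{1}{2\,\eta_{-1}(v)}$, and (\ref{eq:dw}) together with $vdv=dw$ to rewrite $\dfrac{dt}{t^{2}(t-1)}=-vdv$. Combining these three substitutions transforms the bracketed expression in (\ref{eq:Pab}) into
\[
\tfrac{1}{2}\Bigl[\tfrac{\eta_{a+1}(v)\eta_{b+1}(v)}{\eta_{-1}(v)}\,vdv\Bigr]_{+}
-\tfrac{1}{2}\Bigl[\tfrac{F_{a+1}(w)F_{b+1}(w)}{\eta_{-1}(v)}\,vdv\Bigr]_{+}.
\]

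The main step is then to verify that the second bracket vanishes, so that only the asserted $\eta$--expression survives. This is where I expect the only real content: I would argue that $F_{a+1}(w)F_{b+1}(w)$ is holomorphic in $w=\tfrac{1}{2}v^{2}$, hence an even power series in $v$; combined with the fact that $\eta_{-1}(v)=-v\bigl(1+O(v^{2})\bigr)$ is odd in $v$ with a simple zero at $v=0$, the factor $\dfrac{vdv}{\eta_{-1}(v)}$ is a holomorphic (even) form $h(v)dv$ at $v=0$. Substituting $v=v(t)$ from (\ref{eq:v(t)}), which satisfies $v(t)=1/t+O(1/t^{2})$ and hence $v'(t)=O(1/t^{2})$, produces a $1$--form $h(v(t))\,v'(t)\,dt$ whose coefficient is $O(1/t^{2})$ at $t=\infty$. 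Therefore its polynomial part in $t$ is zero, yielding the first formula of the corollary.

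The remaining statements about $\eta_{-1}(v)$ are direct computations. Setting $n=-1$ in (\ref{eq:eta_n}) and rearranging the double factorial $\bigl(2(-1-k)-1\bigr)!!=(-1)^{k+1}/(2k+1)!!$ gives the claimed series
\[
\eta_{-1}(v) = -v\Bigl(1+\sum_{k\ge 1}(-1)^{k} s_k\,\tfrac{1}{(2k+1)!!}\,v^{2k}\Bigr),
\]
which can be double-checked from the closed form $\eta_{-1}(v)=\tfrac{1}{2}\,(t-s(t))/\bigl(ts(t)\bigr)$ in (\ref{eq:eta-1}) by inserting the $v$--expansions of $t$ and $s(t)$. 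The reciprocal formula then follows by applying the geometric series $1/(1+X)=\sum_{m\ge 0}(-X)^{m}$ with $X=\sum_{k\ge 1}(-1)^{k}s_k\,v^{2k}/(2k+1)!!$ and noting that $-X$ equals the series appearing in the statement. The only potential obstacle is a careful bookkeeping of signs and of the branch specification for $v$ from (\ref{eq:tvw}), ensuring that $\eta_n(v)-\hat{\xi}_n(t)$ and $\eta_n(-v)-\hat{\xi}_n(s(t))$ agree as the same holomorphic $F_n(w)$, which is precisely what (\ref{eq:eta=xi+F}) provides.
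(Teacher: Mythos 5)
Your proposal is correct and follows essentially the same route as the paper: substituting $\hat{\xi}_{n}=\eta_{n}-F_{n}$, $\frac{ts(t)}{t-s(t)}=\frac{1}{2\eta_{-1}(v)}$, and $\frac{dt}{t^{2}(t-1)}=-v\,dv$, observing that the cross terms cancel so only $\eta_{a+1}\eta_{b+1}-F_{a+1}F_{b+1}$ survives, and killing the $F_{a+1}F_{b+1}$ contribution by noting that a form holomorphic in $w$ times $dv$ is $O(t^{-2})\,dt$ and hence has no polynomial part. Your spelled-out justification of that last vanishing step (via $v(t)=t^{-1}+O(t^{-2})$) is exactly what the paper's terse appeal to (\ref{eq:v(t)}) means.
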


\begin{proof}
Using the formulas established in Proposition~\ref{prop:LT=eta},
we compute
\begin{align*}
&\half\;\frac{ts(t)}{t-s(t)}\;\frac{dt}{t^2(t-1)}
 \Big(
\hat{\xi}_{a+1}(t)\hat{\xi}_{b+1}\big(s(t)\big)
+\hat{\xi}_{a+1}\big(s(t)\big)\hat{\xi}_{b+1}(t)
\Big)\\
&=
-\frac{ts(t)}{t-s(t)}\;\frac{dt}{t^2(t-1)}
\bigg(\frac{
\hat{\xi}_{a+1}(t)-\hat{\xi}_{a+1}\big(s(t)\big)}{2}\;
\frac{
\hat{\xi}_{b+1}(t)-\hat{\xi}_{b+1}\big(s(t)\big)}{2}\\
&\qquad\qquad 
-
\frac{
\hat{\xi}_{a+1}(t)+\hat{\xi}_{a+1}\big(s(t)\big)}{2}\;
\frac{
\hat{\xi}_{b+1}(t)+\hat{\xi}_{b+1}\big(s(t)\big)}{2}
\bigg)
\\
&=
-\frac{1}{2\eta_{-1}(v)}\bigg(\eta_{a+1}(v)\eta_{b+1}(v)
-F_{a+1}(w)F_{b+1}(w)\bigg)(-v)dv
\\
&=
\frac{\eta_{a+1}(v)\eta_{b+1}(v)}{2\eta_{-1}(v)}vdv
+ \big(\const + O(w)\big)dv .
\end{align*}
From  (\ref{eq:v(t)}) we see $\big[\big(\const + O(w)\big)dv
|_{v\mapsto t}\big]_+=0$. This completes the proof of 
(\ref{eq:Pab-in-eta}).
\end{proof}

For the terms involving the Cauchy differentiation kernel
$B(t_i,t_j)$, 
we have the following formula.

\begin{prop}
\label{prop:Pn}
As a polynomial in $t$ and $t_j$, we have the following
equality:
\begin{multline}
\label{eq:Pn-in-eta}
P_n(t,t_j) dt\tensor dt_j
=
d_{t_j}\left[\frac{ts(t)}{t-s(t)}
\left(\frac{\hat{\xi}_{n+1} (t)ds(t)}{s(t)-t_j} 
+ \frac{\hat{\xi}_{n+1}\big(s(t)\big)dt}
{t-t_j}\right)\right]_+
\\
=d_{t_j}\left[\left.
\frac{\eta_{n+1}(v_j)}{\eta_{-1}(v)}\cdot
\frac{1}{v^2} \sum_{m=0} ^{\rm{finite}}
\left(\frac{v_j}{v}\right)^{2m}
\; vdv\right|_{\substack{v=v(t)\\
v_j=v(t_j)}}
\right]_+ .
\end{multline}
In the RHS we first evaluate the expression 
at $v=v(t)$ and $v_j=v(t_j)$, then expand it as a 
series in $\frac{1}{t}$ and 
$\frac{1}{t_j}$, and  finally truncate it  as a polynomial in both
$t$ and $t_j$.
\end{prop}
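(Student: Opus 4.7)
Since $d_{t_j}$ commutes with polynomial truncation, it suffices to show that the expressions inside the two $[\,\cdot\,]_+$ brackets in (\ref{eq:Pn-in-eta}) have equal polynomial part as formal Laurent series in $1/t,\,1/t_j$. The strategy parallels Corollary 6.2 via the $\eta/F$-decomposition of Proposition 6.1, with additional care for the extra variable $t_j$.

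First, substitute $\hat\xi_{n+1}(t) = \eta_{n+1}(v) - F_{n+1}(w)$ and $\hat\xi_{n+1}(s(t)) = -\eta_{n+1}(v) - F_{n+1}(w)$, and use $\frac{ts(t)}{t-s(t)} = \frac{1}{2\eta_{-1}(v)}$. The LHS bracket then decomposes into an $\iota$-antisymmetric \emph{$\eta$-piece} $\frac{\eta_{n+1}(v)}{2\eta_{-1}(v)}\bigl[\frac{ds(t)}{s(t)-t_j} - \frac{dt}{t-t_j}\bigr]$ and an $\iota$-symmetric \emph{$F$-piece} $-\frac{F_{n+1}(w)}{2\eta_{-1}(v)}\bigl[\frac{ds(t)}{s(t)-t_j} + \frac{dt}{t-t_j}\bigr]$, where $\iota:t\leftrightarrow s(t)$. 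Since $(s(t)-t_j)(t-t_j) = ts(t) - t_j(t+s(t)) + t_j^2$ involves only the $\iota$-invariants $ts(t)$ and $t+s(t)$, it equals an explicit $G(w,t_j)$, so the $F$-piece's bracket is $d\log G = (G_w/G)\,vdv$. Using $v/\eta_{-1}(v) = -1/H(w)$ (with $\eta_{-1}(v) = -vH(w)$, $H$ even, $H(0) = 1$) together with the fact that $ts(t)\sim -1/(2w)$ forces $G_w/G \sim -1/w$ at $w = 0$, the $F$-piece becomes a $dv$-form with leading singular behavior $-F_{n+1}(0)\,dv/v^2$ at $v = 0$, contributing $F_{n+1}(0)\,dt$ to the polynomial truncation in $t$ via $dv/v^2 = d(-1/v) = -dt + O(dt/t^2)$.

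Next, analyze the $\eta$-piece in the $v$-coordinate. Writing $T$ for the local inverse of $v(t)$ on the branch $\text{Re}(t)>1$, $T(v) - T(v_j) = (v-v_j)\bigl(-1/(vv_j) + \tilde A(v,v_j)\bigr)$ with $\tilde A$ analytic, and the antisymmetric bracket equals $-\bigl[T'(v)\phi(v,v_j) + T'(-v)\phi(-v,v_j)\bigr]\,dv$ where $\phi(v,v_j) = 1/(T(v)-T(v_j))$; its expansion at $v = 0$ has leading shape $-2v_j/(v^2-v_j^2)\,dv$ plus analytic corrections. Substituting the polynomial-truncation-compatible expansion $1/(v^2 - v_j^2) = (1/v^2)\sum_{m\ge 0}(v_j/v)^{2m}$ into both the $\eta$-piece and the RHS reduces the comparison to a term-by-term Laurent match in $m$; only finitely many $m \le n+1$ contribute since $\eta_{n+1}(v_j) \sim (2n+1)!!\,t_j^{2n+3}$ as a Laurent series in $1/t_j$, and a higher $m$ would produce negative powers of $t_j$ in the product.

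The main obstacle is this term-by-term verification. The simple poles of (LHS $-$ RHS) at the finite points $v = \pm v_j$ contribute nothing to the polynomial truncation: $\text{Res}\cdot dv/(v\mp v_j)$ has $dv = O(dt/t^2)$, and the Laurent expansion of $1/(v\mp v_j)$ at $v = 0$ is $\mp 1/v_j + O(v)$, so the only surviving contribution to the polynomial part is constant in $t$, which multiplied by $O(dt/t^2)$ yields no polynomial-in-$t$ piece. Analysis thus reduces to the branch point $v = 0$, where the $F_{n+1}(0)\,dt$ contribution from the $F$-piece must combine with the $v\to 0$ expansion of the $\eta$-piece — in particular with the interplay between the pole of order $2n+3$ of $\eta_{n+1}(v)$ at $v = 0$, the factor $1/\eta_{-1}(v) = -1/(vH(w))$, and the geometric expansion of $1/(v^2-v_j^2)$ — to produce exactly the finite-sum RHS. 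This is the two-variable analog of the $(\mathrm{const} + O(w))\,dv$-has-vanishing-polynomial-part step in Corollary 6.2, but here the contribution is nontrivial, and the matching requires careful bookkeeping of Laurent coefficients in both $v$ and $v_j$.
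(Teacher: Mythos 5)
Your framework is sound and parallels the paper's Section~\ref{sect:Laplace}: passing to the $v$-coordinate, using $\frac{ts(t)}{t-s(t)}=\frac{1}{2\eta_{-1}(v)}$, splitting via $\hat\xi_{n+1}(t)=\eta_{n+1}(v)-F_{n+1}(w)$, $\hat\xi_{n+1}(s(t))=-\eta_{n+1}(v)-F_{n+1}(w)$ into an antisymmetric $\eta$-piece and a symmetric $F$-piece, and checking that the simple poles at $v=\pm v_j$ do not affect the polynomial part. But the proof is not complete: the step you yourself label ``the main obstacle'' --- matching the $\eta$-piece, which carries $\eta_{n+1}(v)$ with a pole of order $2n+3$ at $v=0$, against the right-hand side, which carries $\eta_{n+1}(v_j)$ with the pole sitting in the \emph{other} variable --- is the entire content of the proposition, and ``careful bookkeeping of Laurent coefficients'' is not an argument. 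Two specific things go unaddressed. First, the ``analytic corrections'' $E(w,v_j)$ to the leading shape $-\frac{2v_j}{v^2-v_j^2}\,dv$ of the antisymmetric bracket are \emph{not} negligible: multiplied by $\eta_{n+1}(v)/\eta_{-1}(v)$, whose pole at $v=0$ has order $2n+2$, they produce polynomial-in-$t$ terms that depend on $t_j$ (already $E(0,v_j)=2(a_0-t_j)$) and hence survive $d_{t_j}$; these must cancel against the $v\leftrightarrow v_j$ mismatch, and you give no mechanism for that cancellation. Second, you never exhibit the mechanism by which $\eta_{n+1}(v)$ is traded for $\eta_{n+1}(v_j)$ under the truncation.

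For comparison, the paper resolves exactly these two points by (a) observing that $d_{t_j}\bigl[\frac{\eta_{n+1}(v)}{\eta_{-1}(v)}\frac{1}{v^2}\sum_m(v_j/v)^{2m}\,v\,dv\bigr]_+=0$, which permits replacing $\eta_{n+1}(v_j)$ by $-(\eta_{n+1}(v)-\eta_{n+1}(v_j))$ and hence passing to the difference quotient $\frac{\hat\xi_{n+1}(t)-\hat\xi_{n+1}(t_j)}{w-w_j}$ (the $F$-contributions drop out because $\frac{F_{n+1}(w)-F_{n+1}(w_j)}{w-w_j}$ is holomorphic along the diagonal, rather than by your residue-at-$v=0$ computation of the $F$-piece, which in any case only yields a $t_j$-independent term killed by $d_{t_j}$); and (b) a Lemma asserting that $d_{t_j}\bigl[(t^n-t_j^n)\bigl(\frac{-dv}{-v-v_j}-\frac{ds(t)}{s(t)-t_j}-\frac{dv}{v-v_j}+\frac{dt}{t-t_j}\bigr)\bigr]_+=0$, proved by noting that $\frac{dv}{v-v_j}-\frac{dt}{t-t_j}$ is the difference of the Cauchy kernels in the two coordinates, holomorphic near and vanishing on the diagonal, and that its pullback under the involution has the identical Taylor expansion, so the combined form contains no terms free of $\frac1t$. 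Nothing equivalent to this Lemma appears in your proposal, so the argument as written does not establish the identity.
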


\begin{proof}
From the formulas for $\hat{\xi}_n(t)$ and $\eta_n(v)$,
we know that both expressions have the same degree
$2n+2$ in $t$ and $t_j$.
Since the powers of $v_j$ in the summation
 $\sum_{m=0} ^{\rm{finite}}
\left(\frac{v_j}{v}\right)^{2m}$ is non-negative, clearly
we have
$$
d_{t_j}\left[\left.
\frac{\eta_{n+1}(v)}{\eta_{-1}(v)}\cdot
\frac{1}{v^2} \sum_{m=0} ^{\rm{finite}}
\left(\frac{v_j}{v}\right)^{2m}
\; vdv\right|_{\substack{v=v(t)\\
v_j=v(t_j)}}
\right]_+ =0.
$$
Thus we can replace the RHS of (\ref{eq:Pn-in-eta})
by
$$
d_{t_j}\left[\left.
\frac{\eta_{n+1}(v)-\eta_{n+1}(v_j)}{\eta_{-1}(v)}\cdot
\frac{1}{v^2} \sum_{m=0} ^{\rm{finite}}
\left(\frac{v_j}{v}\right)^{2m}
\; (-v)dv\right|_{\substack{v=v(t)\\
v_j=v(t_j)}}
\right]_+ .
$$
Since the degree of $\eta_{n+1}\big(v(t_j)\big)$ in $t_j$ is
$2n+3$, the finite sum in $m$ of the above 
expression contributes nothing
for $m>n+2$. Therefore,
\begin{align*}
&
d_{t_j}\left[\left.
\frac{\eta_{n+1}(v)-\eta_{n+1}(v_j)}{\eta_{-1}(v)}\cdot
\frac{1}{v^2} \sum_{m=0} ^{\rm{finite}}
\left(\frac{v_j}{v}\right)^{2m}
\; (-v)dv\right|_{\substack{v=v(t)\\
v_j=v(t_j)}}
\right]_+
 \\
= \; 
&
d_{t_j}\left[\left.
\frac{t s(t)}{t-s(t)}
\bigg(
\frac{\hat{\xi}_{n+1}(t)-\hat{\xi}_{n+1}(t_j)}{w-w_j}
+\frac{F_{n+1}(w) - F_{n+1}(w_j)}{w-w_j}
\bigg)
(-dw)\right|_{\substack{v=v(t)\\
v_j=v(t_j)}}
\right]_+ 
 \\
= \; 
&
d_{t_j}\left[\left.
\frac{t s(t)}{t-s(t)}
\;
\frac{\hat{\xi}_{n+1}(t)-\hat{\xi}_{n+1}(t_j)}{w-w_j}
\;
(-dw)\right|_{\substack{v=v(t)\\
v_j=v(t_j)}}
\right]_+ 
\end{align*}
because of (\ref{eq:eta=xi+F}). We also used the fact that
$$
\frac{1}{v^2} \sum_{m=0} ^{\rm{finite}}
\left(\frac{v_j}{v}\right)^{2m}
\; vdv 
=\half\; \frac{dw}{w-w_j} + O(w_j ^{n+2})dw,
$$
and that
$\frac{F_{n+1}(w) - F_{n+1}(w_j)}{w-w_j}$ is 
holomorphic along $w=w_j$.
Let us use once again $-\hat{\xi}_{n+1}(t) = 
\hat{\xi}_{n+1}\big(s(t)\big) +2F_{n+1}(w)$
and 
$
-\frac{dw}{w-w_j}=
-\frac{2vdv}{v^2-v_j ^2} =\left(
 \frac{1}{-v-v_j}-\frac{1}{v-v_j}\right)dv.
$
We obtain
\begin{multline*}
d_{t_j}\left[\left.
\frac{t s(t)}{t-s(t)}
\;
\frac{\hat{\xi}_{n+1}(t)-\hat{\xi}_{n+1}(t_j)}{w-w_j}
\;
(-dw)\right|_{\substack{v=v(t)\\
v_j=v(t_j)}}
\right]_+
 \\
= 
d_{t_j}\left[\left.
\frac{t s(t)}{t-s(t)}
\left(
\frac{\hat{\xi}_{n+1}(t)-\hat{\xi}_{n+1}(t_j)}{-v-v_j}\;
\frac{dv}{ds(t)}ds(t)\right)
\right|_{\substack{v=v(t)\\
v_j=v(t_j)}}
\right]_+ 
\\
+
d_{t_j}\left[\left.
\frac{t s(t)}{t-s(t)}
\left(
\frac{\hat{\xi}_{n+1}\big(s(t)\big)-\hat{\xi}_{n+1}\big(s(t_j)\big)}{v-v_j}\;
\frac{dv}{dt}dt\right)
\right|_{\substack{v=v(t)\\
v_j=v(t_j)}}
\right]_+ 
 \\
= 
d_{t_j}\left[
\frac{t s(t)}{t-s(t)}
\left(
\frac{\hat{\xi}_{n+1}(t)-\hat{\xi}_{n+1}(t_j)}{s(t)-t_j}\;
\frac{s(t)-t_j}{v\big(s(t)\big) - v(t_j)}\;
\frac{dv\big(s(t)\big)}{ds(t)}ds(t)
\right)
\right]_+ 
\\
+
d_{t_j}\left[
\frac{t s(t)}{t-s(t)}
\left(
\frac{\hat{\xi}_{n+1}\big(s(t)\big)-\hat{\xi}_{n+1}\big(s(t_j)\big)}{t-t_j}\;
\frac{t-t_j}{v(t)-v(t_j)}\;
\frac{dv(t)}{dt}dt\right)
\right]_+ .
\end{multline*}
Here we remark that
\begin{multline*}
d_{t_j}\left[
\frac{t s(t)}{t-s(t)}
\left(
\frac{\hat{\xi}_{n+1}(t)-\hat{\xi}_{n+1}(t_j)}{s(t)-t_j}ds(t)
+\frac{\hat{\xi}_{n+1}\big(s(t)\big)-\hat{\xi}_{n+1}\big(s(t_j)\big)}{t-t_j}dt
\right)
\right]_+ 
\\
=
d_{t_j}\left[
\frac{t s(t)}{t-s(t)}
\left(
\frac{\hat{\xi}_{n+1}(t)}{s(t)-t_j}ds(t)
+\frac{\hat{\xi}_{n+1}\big(s(t)\big)}{t-t_j}dt
\right)
\right]_+ ,
\end{multline*}
because the extra terms in the LHS do not contribute 
to the polynomial part in $t$. Therefore, it suffices to show
that 
\begin{multline}
\label{eq:sufficient}
d_{t_j}\left[
\frac{t s(t)}{t-s(t)}\;
\frac{\hat{\xi}_{n+1}(t)-\hat{\xi}_{n+1}(t_j)}{s(t)-t_j}
\left(\frac{s(t)-t_j}{v\big(s(t)\big) - v(t_j)}\;
\frac{dv\big(s(t)\big)}{ds(t)}-1
\right)ds(t)
\right]_+ 
\\
+
d_{t_j}\left[
\frac{t s(t)}{t-s(t)}
\;
\frac{\hat{\xi}_{n+1}\big(s(t)\big)-\hat{\xi}_{n+1}\big(s(t_j)\big)}{t-t_j}\left(
\frac{t-t_j}{v(t)-v(t_j)}\;
\frac{dv(t)}{dt}-1\right)dt
\right]_+ 
\\
=
d_{t_j}\left[
\frac{t s(t)}{t-s(t)}\;
\bigg(\hat{\xi}_{n+1}(t)-\hat{\xi}_{n+1}(t_j)\bigg)
\left(\frac{-dv(t)}{-v(t) - v(t_j)}-
\frac{ds(t)}{s(t)-t_j}
\right)
\right]_+ 
\\
+
d_{t_j}\left[
\frac{t s(t)}{t-s(t)}
\;
\bigg(\hat{\xi}_{n+1}\big(s(t)\big)-\hat{\xi}_{n+1}\big(s(t_j)\big)
\bigg)\left(
\frac{dv(t)}{v(t)-v(t_j)}-
\frac{dt}{t-t_j}\right)
\right]_+ 
\\
=
d_{t_j}\left[
\frac{t s(t)}{t-s(t)}\;
\bigg(\hat{\xi}_{n+1}(t)-\hat{\xi}_{n+1}(t_j)\bigg)
\left(\frac{-dv(t)}{-v(t) - v(t_j)}-
\frac{ds(t)}{s(t)-t_j}
\right)
\right]_+ 
\\
-
d_{t_j}\left[
\frac{t s(t)}{t-s(t)}
\;
\bigg(\hat{\xi}_{n+1}(t)-\hat{\xi}_{n+1}(t_j)
\bigg)\left(
\frac{dv(t)}{v(t)-v(t_j)}-
\frac{dt}{t-t_j}\right)
\right]_+ 
=0,
\end{multline}
in light of 
 (\ref{eq:eta=xi+F}).
At this stage we need the following Lemma:

\begin{lem} For every $n\ge 0$ we have the identity
\begin{equation}
0= d_{t_j}\left[\left.
(t^n-t_j ^n)
\left(\frac{-dv}{-v-v_j}
-\frac{ds(t)}{s(t)-t_j}-\frac{dv}{v-v_j}+\frac{dt}{t-t_j}
\right)
\right|_{\substack{v=v(t)\\
v_j=v(t_j)}}
\right]_+ .
\end{equation}
\end{lem}

\begin{proof}[Proof of Lemma]
First let us recall that
$B(t,t_j) = d_{t_j}\left(\frac{dt}{t-t_j}\right)$ is the 
Cauchy differentiation kernel of the Lambert curve $C$, which is a
symmetric quadratic form on $C\times C$ with 
second order poles along the diagonal $t=t_j$. 
The function $v=v(t)$ is a local coordinate change,
which transforms $v=0$ to $t=\infty$. Therefore, the form
$\frac{dv}{v-v_j}-\frac{dt}{t-t_j}$ is a 
meromorphic $1$-form locally defined on $C\times C$, 
which is actually holomorphic on a neighborhood of  the diagonal
and vanishes on the diagonal. Therefore, 
it has the Taylor series expansion in $\frac{1}{t}$ and 
$\frac{1}{t_j}$ without a constant term. 

Since $v\big(s(t)\big) = -v(t)$, the form
$\frac{dv}{v+v_j}
-\frac{ds(t)}{s(t)-t_j}$ is the pull-back of 
$\frac{dv}{v-v_j}-\frac{dt}{t-t_j}$ via the local involution 
$s:C\rightarrow C$ that is applied to the first factor. 
Thus this is again a local holomorphic $1$-form on $C\times C$
and has exactly the same  Taylor expansion in 
$\frac{1}{s(t)}$ and 
$\frac{1}{t_j}$. Therefore, in the $\frac{1}{t_j}$-expansion of 
the difference 
$\frac{-dv}{-v-v_j}
-\frac{ds(t)}{s(t)-t_j}-\frac{dv}{v-v_j}+\frac{dt}{t-t_j}$,
each coefficient does not contain a constant term because
it is cancelled by taking the difference. It implies that
the difference $1$-form
does not contain any terms without $\frac{1}{t}$.
In other words, we have
\begin{equation}
\label{eq:tj-terms}
0= \left[\left.
t_j ^n
\left(\frac{-dv}{-v-v_j}
-\frac{ds(t)}{s(t)-t_j}-\frac{dv}{v-v_j}+\frac{dt}{t-t_j}
\right)
\right|_{\substack{v=v(t)\\
v_j=v(t_j)}}
\right]_+ .
\end{equation}
Note that  we have an expression of the form
\begin{equation}
\label{eq:ttj-expansion}
\left[
\frac{-dv}{-v-v_j}
-\frac{ds(t)}{s(t)-t_j}-\frac{dv}{v-v_j}+\frac{dt}{t-t_j}
\right]
_{\substack{v=v(t)\\
v_j=v(t_j)}}
=f(t^{-1}) + \frac{1}{t_j}\;F\left(\frac{1}{t},\frac{1}{t_j}\right),
\end{equation}
where $f$ is a power series in one variable and $F$ a power 
series in two variables. Therefore, 
\begin{equation}
\label{eq:t-terms}
0= d_{t_j}\left[\left.
t ^n
\left(\frac{-dv}{-v-v_j}
-\frac{ds(t)}{s(t)-t_j}-\frac{dv}{v-v_j}+\frac{dt}{t-t_j}
\right)
\right|_{\substack{v=v(t)\\
v_j=v(t_j)}}
\right]_+ .
\end{equation}
Lemma follows from (\ref{eq:tj-terms}) and (\ref{eq:t-terms}). 
\end{proof}

It is obvious from (\ref{eq:tj-terms}) and 
(\ref{eq:ttj-expansion}) that
\begin{equation}
\label{eq:n-expression}
0= d_{t_j}\left[\left.
(t^n-t_j ^n)\frac{ts(t)}{t-s(t)}
\left(\frac{-dv}{-v-v_j}
-\frac{ds(t)}{s(t)-t_j}-\frac{dv}{v-v_j}+\frac{dt}{t-t_j}
\right)
\right|_{\substack{v=v(t)\\
v_j=v(t_j)}}
\right]_+ .
\end{equation}
Since $\hat{\xi}_{n+1}(t)$ is a polynomial in $t$,
(\ref{eq:sufficient}) follows from (\ref{eq:n-expression}).
This completes the proof of the proposition.  
\end{proof}

\section{Proof of the Bouchard-Mari\~no topological recursion formula}
\label{sect:proof}

In this section we prove (\ref{eq:BMalgebraic}).
Since it is equivalent to Conjecture~\ref{conj:BM},
we establish the Bouchard-Mari\~no conjecture. 
Our procedure is to take the direct image of
the 
equation (\ref{eq:cajLT}) on the Lambert curve
via the projection
$\pi:C\rightarrow \bC$.
To compute the direct image, it is easier to switch to 
the coordinate $v$ of the Lambert curve, because
of the relation (\ref{eq:v(s)}).  
This simple relation tells us that the direct image 
of a function $f(v)$ 
on $C$ via the projection $\pi:C\rightarrow
\bC$ is
just the even powers of the $v$-variable in $f(v)$:
$$
\pi_* f =  f(v) + f(-v).
$$ 
After taking the direct image, we extract the
principal part of the meromorphic function in $v$,
which becomes the Bouchard-Mari\~no recursion
(\ref{eq:BMalgebraic}).
To this end, we utilize the formulas
developed in Section~\ref{sect:Laplace}.

Here again let us consider the $\ell=1$ case first.
We start with Proposition~\ref{prop:ell=1LT}.

\begin{thm}
\label{thm:ell=1}
We have the following equation 
\begin{multline}
\label{eq:ell=1-inv}
-\sum_{n\le 3g-2}\la \tau_n \Lam_g ^\vee (1)\ra_{g,1}
\eta_{-1}(v)\eta_{n+1}(v)
=
\half
\sum_{a+b\le 3g-4}
\bigg[
\la \tau_a\tau_b \Lam_{g-1} ^\vee (1)\ra_{g-1,2}
\\
+\sum_{g_1+g_2=g} ^{\rm{stable}}
\la \tau_a\Lam_{g_1} ^\vee(1) \ra_{g_1,1}
\la \tau_b\Lam_{g_2} ^\vee(1) \ra_{g_2,1}
\bigg]
\bigg(
\eta_{a+1}(v)\eta_{b+1}(v)+O_w(1)
\bigg),
\end{multline}
where $O_w(1)$ denotes a holomorphic function in $w=\half v^2$.
\end{thm}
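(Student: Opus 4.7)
The plan is to prove (\ref{eq:ell=1-inv}) by Galois-averaging the polynomial identity (\ref{eq:ell=1LT}) of Proposition~\ref{prop:ell=1LT} with respect to the local deck transformation $s: C \to C$, which in the Airy coordinate $v$ of (\ref{eq:v(t)}) acts as $v \mapsto -v$. Since $\pi \circ s = \pi$ near the critical point, this averaging is exactly the direct image $\pi_*$ advocated in the introductory diagram. The target identity (\ref{eq:ell=1-inv}) is manifestly an equation between functions even in $v$: each $\eta_n(v)$ is odd by (\ref{eq:eta_n}), so both $\eta_{-1}\eta_{n+1}$ and $\eta_{a+1}\eta_{b+1}$ are even. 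Hence no information is lost in passing to the symmetric part of (\ref{eq:ell=1LT}).

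The key algebraic input is the decomposition supplied by Proposition~\ref{prop:LT=eta}: for $n \ge -1$,
\[
\hxi_n(t) = \eta_n(v) - F_n(w), \qquad \hxi_n\bigl(s(t)\bigr) = -\eta_n(v) - F_n(w),
\]
with $F_n(w)$ holomorphic in $w = \half v^2$. Using the explicit relation $1 - \hxi_{-1}(t) = 1/t$ together with the resulting specializations $1/t = (1+F_{-1}(w)) - \eta_{-1}(v)$ and $1/s(t) = (1+F_{-1}(w)) + \eta_{-1}(v)$, I will verify the three averaged formulas
\begin{align*}
\tfrac{1}{2}\bigl[\hxi_n(t)+\hxi_n(s(t))\bigr] &= -F_n(w),\\
\tfrac{1}{2}\bigl[\hxi_{a+1}(t)\hxi_{b+1}(t) + \hxi_{a+1}(s(t))\hxi_{b+1}(s(t))\bigr] &= \eta_{a+1}(v)\eta_{b+1}(v) + F_{a+1}(w)F_{b+1}(w),\\
\tfrac{1}{2}\bigl[\hxi_{n+1}(t)(1-\hxi_{-1}(t)) + \hxi_{n+1}(s(t))(1-\hxi_{-1}(s(t)))\bigr] &= -\eta_{-1}(v)\eta_{n+1}(v) - F_{n+1}(w)\bigl(1+F_{-1}(w)\bigr),
\end{align*}
each of which is a direct bilinear expansion.

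Substituting these three formulas into the Galois average of (\ref{eq:ell=1LT}) produces the identity
\[
-\sum_{n}\la \tau_n \Lam_g^\vee(1)\ra_{g,1}\, \eta_{-1}(v)\eta_{n+1}(v) + H(w) = \half\sum_{a+b}[\,\cdots\,]\bigl(\eta_{a+1}(v)\eta_{b+1}(v) + F_{a+1}(w)F_{b+1}(w)\bigr),
\]
where $H(w) = \sum_n \la \tau_n\Lam_g^\vee(1)\ra_{g,1}\bigl[(2g-1)F_n(w) + F_{n+1}(w)(1+F_{-1}(w))\bigr]$ is holomorphic in $w$. Since the generic summand $O_w(1)$ on the right of (\ref{eq:ell=1-inv}) is, by definition, an arbitrary $(a,b)$-dependent holomorphic function of $w$, we can absorb both $F_{a+1}(w)F_{b+1}(w)$ and a distributed share of $H(w)$ into it, yielding the stated equality. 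The only real subtlety is ensuring that no antisymmetric or negative-power-in-$v$ remainder escapes outside the displayed $\eta\eta$ products; this is automatic because $F_n(w)$ is by construction even in $v$, while Proposition~\ref{prop:LT=eta} identifies $\eta_n(v)$ as precisely the antisymmetric part of $\hxi_n(t)$ under the involution, so the bookkeeping closes exactly.
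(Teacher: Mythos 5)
Your proposal is correct and takes essentially the same route as the paper's proof: the paper likewise substitutes (\ref{eq:eta=xi+F}) into (\ref{eq:ell=1LT}), sorts the result into even-in-$v$ products of $\eta$'s, odd-in-$v$ terms of the form $vf(w)$, and holomorphic terms in $w$, and then extracts the principal part of the Laurent series in $w$ --- which is exactly your Galois average (the direct image $\pi_*$) followed by absorbing the $F$-terms into $O_w(1)$. The only blemish is that with your definition of $H(w)$ the averaged left-hand side reads $-H(w)$ rather than $+H(w)$, but this is immaterial since $H$ is holomorphic in $w$ and is absorbed into the remainder.
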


\begin{proof}
We use (\ref{eq:eta=xi+F}) to change from the $t$-variables
to the $v$-variables.
The function factor of the LHS of (\ref{eq:ell=1LT}) becomes
\begin{equation*}
(2g-1)\eta_n(v)+\eta_{n+1}(v) -\eta_{-1}(v)\eta_{n+1}(v)
+vf_L(w) + \const +O(w)
,
\end{equation*}
where $f_L(w)$ is a Laurent series in $w$. 
The function factor of the RHS is
\begin{equation*}
\eta_{a+1}(v)\eta_{b+1}(v)+vf_R(w) + \const +O(w),
\end{equation*}
where $f_R(w)$ is another Laurent series in $w$. 
We note that the product of two $\eta_n$-functions is a
Laurent series in $w$. Therefore, extracting the
principal part of the Laurent series in $w$, we obtain \begin{multline*}
-\sum_{n\le 3g-2}\la \tau_n \Lam_g ^\vee (1)\ra_{g,1}
\eta_{-1}(v)\eta_{n+1}(v)
=
\half
\sum_{a+b\le 3g-4}
\bigg[
\la \tau_a\tau_b \Lam_{g-1} ^\vee (1)\ra_{g-1,2}
\\
+\sum_{g_1+g_2=g} ^{\rm{stable}}
\la \tau_a\Lam_{g_1} ^\vee(1) \ra_{g_1,1}
\la \tau_b\Lam_{g_2} ^\vee(1) \ra_{g_2,1}
\bigg]
\bigg(
\eta_{a+1}(v)\eta_{b+1}(v)+\const +O(w)
\bigg),
\end{multline*}
which completes the proof of the theorem.
\end{proof}

\begin{cor}
\label{cor:ell=1}
The cut-and-join equation {\rm{(\ref{eq:cajcoefficient})}}
for the case of $\ell=1$ implies the topological 
recursion {\rm{(\ref{eq:BMell=1})}}.
\end{cor}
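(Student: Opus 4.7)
The plan is to derive (\ref{eq:BMell=1}) from the $v$-variable identity (\ref{eq:ell=1-inv}) of Theorem~\ref{thm:ell=1} by a single algebraic manipulation on the Lambert curve: multiply both sides by the $1$-form $\frac{v\,dv}{\eta_{-1}(v)}$, evaluate at $v = v(t)$ via (\ref{eq:v(t)}), and extract the polynomial part $[\cdot]_+$ in $t$. The key identity $v\,dv = dw = -\frac{dt}{t^2(t-1)}$ from (\ref{eq:dw}) and (\ref{eq:tvw}), the comparison (\ref{eq:eta=xi+F}) between $\eta_n$ and $\hxi_n$, the recursion $\hxi_{n+1}(t) = t^2(t-1)\hxi_n'(t)$ from (\ref{eq:hxi-recursion}), and Corollary~\ref{cor:Pab} will together supply every ingredient needed.

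For the left-hand side, each summand $-\eta_{-1}(v)\eta_{n+1}(v)$ becomes $\eta_{n+1}(v)\frac{dt}{t^2(t-1)}$ after this manipulation. Invoking (\ref{eq:eta=xi+F}) to replace $\eta_{n+1}(v)$ by $\hxi_{n+1}(t) + F_{n+1}(w)$, the polynomial piece $\hxi_{n+1}(t)/[t^2(t-1)] = \hxi_n'(t)$ appears directly by (\ref{eq:hxi-recursion}), while the remainder $F_{n+1}(w)/[t^2(t-1)]$ is a holomorphic function of $w = O(t^{-2})$ divided by $t^2(t-1)$, so it expands as a pure Laurent tail in $t^{-1}$ and is annihilated by $[\cdot]_+$. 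Hence the left-hand side reduces to $\sum \la \tau_n \Lam_g^\vee(1)\ra_{g,1}\,\frac{d}{dt}\hxi_n(t)\,dt$, matching the left-hand side of (\ref{eq:BMell=1}).

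For the right-hand side, the main term $\frac{\eta_{a+1}(v)\eta_{b+1}(v)}{\eta_{-1}(v)}\,v\,dv$ evaluates, after $[\cdot]_+$, to $2\,P_{a,b}(t)\,dt$ directly by Corollary~\ref{cor:Pab}, so the overall factor $\half$ returns $P_{a,b}(t)\,dt$. For the holomorphic-in-$w$ remainder $\frac{O_w(1)}{\eta_{-1}(v)}\,v\,dv$, I will observe that $v/\eta_{-1}(v) = -1 + O(w)$ is even in $v$, hence holomorphic in $w$, so the whole expression reads $G(w)\,dv$ for some $G$ analytic at $w = 0$; evaluating at $v = v(t)$ with $dv = v'(t)\,dt = O(t^{-2})\,dt$, this is a Laurent series in $t^{-1}$ with strictly negative powers times $dt$, and is killed by $[\cdot]_+$.

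The only conceptual subtlety, and the point I expect to need the most care, is that (\ref{eq:ell=1-inv}) is only guaranteed as an equality of principal parts in $w$; one might worry that the ambiguity of a holomorphic-in-$w$ function on either side spoils the hoped-for polynomial identity in $t$. The resolution is the very same argument already used on the error term: any extra $X(w)$ becomes $X(w)\cdot v\,dv/\eta_{-1}(v) = \tilde{G}(w)\,dv$, which lies in the kernel of $[\cdot]_+$ after substitution, by parity combined with $w(t) = O(t^{-2})$. Thus the manipulation produces a genuine polynomial identity in $t$, and the three pieces assemble summand-by-summand into (\ref{eq:BMell=1}), completing the proof of Corollary~\ref{cor:ell=1}.
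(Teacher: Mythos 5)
Your proposal is correct and follows essentially the same route as the paper: starting from the $v$-variable identity of Theorem~\ref{thm:ell=1}, multiplying through by $v\,dv/\eta_{-1}(v)$, passing back to the $t$-coordinate, and extracting the polynomial part, with (\ref{eq:eta=xi+F}), (\ref{eq:hxi-recursion}) and Corollary~\ref{cor:Pab} supplying the identifications and the $[\,\cdot\,]_+$ operation annihilating all holomorphic-in-$w$ remainders. Your explicit justification of why those remainders (and any holomorphic ambiguity in the starting identity) die under $[\,\cdot\,]_+$ is a slightly more detailed version of the paper's one-line observation, but it is the same argument.
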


\begin{proof}
Going back to the $t$-coordinates and
using (\ref{eq:xihat}), (\ref{eq:hxi-recursion}), and 
(\ref{eq:eta=xi+F}) in (\ref{eq:ell=1-inv}), we establish
\begin{multline}
\label{eq:ell=1final}
\sum_{n\le 3g-2} \la \tau_n\Lambda_g ^\vee(1)
\ra_{g,1}\xi_n(t)
=
\half
\sum_{a+b\le 3g-4}
\Bigg(
 \la \tau_a\tau_b\Lambda_{g-1} ^\vee(1)
\ra_{g-1,2} \\
+
\sum_{g_1+g_2=g} ^{\text{stable}}
\la \tau_a\Lambda_{g_1} ^\vee(1)
\ra_{g_1,1}
\la \tau_b\Lambda_{g_2} ^\vee(1)
\ra_{g_2,1}
\Bigg)
\left[
\left.\frac{
\eta_{a+1}(v)\eta_{b+1}(v)}
{\eta_{-1}(v)} \;vdv\right|_{v=v(t)}
\right]_+  ,
\end{multline}
since 
$$
\left[\left.\frac{
\const +O(w)}
{\eta_{-1}(v)} \;vdv\right|_{v=v(t)}
\right]_+ =0.
$$
From Corollary~\ref{cor:Pab}, 
we conclude that (\ref{eq:ell=1final}) is identical to 
(\ref{eq:BMell=1}). This completes the proof of the
topological recursion for $\ell=1$.
\end{proof}

We are now ready to give a proof of
(\ref{eq:BMalgebraic}). The starting point is
the Laplace transform of the cut-and-join
equation, as we have established in Theorem~\ref{thm:CAJLT}.
Since we are interested in the
\emph{principal part} of the formula in 
the $v$-coordinate expansion, in what follows we
ignore all terms that contain any positive powers of 
one of the $v_i$'s.

First let us deal with the unstable $(0,2)$-terms
computed in (\ref{eq:H02LT}).
Using (\ref{eq:eta-recursion}), we find
\begin{equation*}
-\frac{\partial}{\partial w_i}\hatH_{0,2}(t_i,t_j)
\equiv
-\frac{1}{v_i}\;\frac{\partial}{\partial v_i}
\log\left(\eta_{-1}(v_i)-\eta_{-1}(v_j)\right)
\equiv
\frac{\eta_0(v_i)}{\eta_{-1}(v_i)-\eta_{-1}(v_j)}
\end{equation*}
modulo holomorphic functions in $w_i$ and $w_j$.
Therefore,  the result of the coordinate change from the
$t$-coordinates to the $v$-coordinates is the following:
\begin{multline}
\label{eq:caj-in-v}
\sum_{n_L}\la \tau_{n_L}\Lam_g ^\vee(1)\ra_{g,\ell}
\Bigg(
 (2g-2+\ell)\eta_{n_L}(v_L)
+
\sum_{i=1} ^\ell \eta_{n_i+1}(v_i)\eta_{L\setminus\{i\}}(
v_{L\setminus\{i\}})
\\
-
\sum_{i=1} ^\ell
\eta_{-1}(v_i)\eta_{n_i+1}(v_i)
\eta_{n_{L\setminus\{i\}}}(
v_{L\setminus\{i\}})
\Bigg)
\\
\equiv
\half\sum_{i=1}^\ell
\sum_{n_{L\setminus\{i\}}}\sum_{a,b}
\Bigg(
\la \tau_a\tau_b\tau_{n_{L\setminus\{i\}}}\Lam_{g-1} ^\vee
(1)\ra_{g-1,\ell+1}\\
+
\sum_{\substack{g_1+g_2 = g\\
I\sqcup J = L\setminus\{i\}}} ^{\rm{stable}}
\la \tau_a\tau_{n_{I}}\Lam_{g_1} ^\vee
(1)\ra_{g_1,|I|+1}
\la \tau_b\tau_{n_{J}}\Lam_{g_2} ^\vee
(1)\ra_{g_2,|J|+1}
\Bigg)
\eta_{a+1}(v_i)\eta_{b+1}(v_i)\eta_{n_{L\setminus\{i\}}}(
v_{L\setminus\{i\}})
\\
+
\half\sum_{i=1}^\ell\sum_{j\ne i}
\sum_{n_{L\setminus\{i,j\}}}\sum_{m }
\la \tau_{n_{L\setminus\{i,j\}}}\tau_{m }
\Lam_{g} ^\vee (1)\ra_{g,\ell-1}
\eta_{n_{L\setminus\{i,j\}}}(v_{L\setminus\{i,j\}})
\\
\times
 \frac{\eta_{m+1}(v_i)\eta_0(v_i)-\eta_{m+1}(v_j)\eta_0(v_j)}{\eta_{-1}(v_i)-\eta_{-1}(v_j)},
\end{multline}
again modulo terms containing any holomorphic
terms in any of $w_k$'s. 
At this stage we take the direct image
with respect to the projection $\pi:C\rightarrow \bC$
applied to the $v_1$-coordinate component,
and then restrict the result to its principal part,
meaning that we throw away any terms that contain
non-negative powers of any of the $v_k$'s.
Thanks to (\ref{eq:eta=xi+F}), only those terms
containing $\eta_a(v_1)\eta_b(v_1)$ survive. 
The last term of (\ref{eq:caj-in-v})
requires a separate care. We find
\begin{multline*}
\half\left(
\frac{\eta_{m+1}(v_1)\eta_0(v_1)-\eta_{m+1}(v_j)\eta_0(v_j)}{\eta_{-1}(v_1)-\eta_{-1}(v_j)}
+
\frac{\eta_{m+1}(v_1)\eta_0(v_1)-\eta_{m+1}(v_j)\eta_0(v_j)}{-\eta_{-1}(v_1)-\eta_{-1}(v_j)}
\right)
\\
=
-\left(
\eta_{m+1}(v_1)\eta_0(v_1)-\eta_{m+1}(v_j)\eta_0(v_j)
\right)
\frac{\eta_{-1}(v_j)}{\eta_{-1}(v_1)^2 -\eta_{-1}(v_j)^2}
\\
\equiv
\frac{\eta_{m+1}(v_j)}{v_1 ^2-v_j ^2}
=\frac{\eta_{m+1}(v_j)}{v_1 ^2}
 \sum_{k=0} ^\infty \left(\frac{v_j}{v_1}\right)^{2k},
\end{multline*}
modulo terms containing non-negative terms in $v_j$. 
Thus by taking the
direct image and reducing to the
principal part, (\ref{eq:caj-in-v}) is greatly simplified.
We have obtained:

\begin{thm}
\label{thm:ell=general}
\begin{multline}
\label{eq:caj-in-w}
-\sum_{n_L}\la \tau_{n_L}\Lam_g ^\vee(1)\ra_{g,\ell}
\eta_{-1}(v_1)\eta_{n_1+1}(v_1)
\eta_{n_{L\setminus\{1\}}}(
v_{L\setminus\{1\}})
\\
\equiv
\half
\sum_{n_{L\setminus\{1\}}}\sum_{a,b}
\Bigg(
\la \tau_a\tau_b\tau_{n_{L\setminus\{1\}}}\Lam_{g-1} ^\vee
(1)\ra_{g-1,\ell+1}\\
+
\sum_{\substack{g_1+g_2 = g\\
I\sqcup J = L\setminus\{1\}}} ^{\rm{stable}}
\la \tau_a\tau_{n_{I}}\Lam_{g_1} ^\vee
(1)\ra_{g_1,|I|+1}
\la \tau_b\tau_{n_{J}}\Lam_{g_2} ^\vee
(1)\ra_{g_2,|J|+1}
\Bigg)
\eta_{a+1}(v_1)\eta_{b+1}(v_1)\eta_{n_{L\setminus\{1\}}}(
v_{L\setminus\{1\}})
\\
+
\half\sum_{j\ge 1}
\sum_{n_{L\setminus\{1,j\}}}\sum_{m }
\la \tau_{n_{L\setminus\{1,j\}}}\tau_{m }
\Lam_{g} ^\vee (1)\ra_{g,\ell-1}
\eta_{n_{L\setminus\{1,j\}}}(v_{L\setminus\{1,j\}})
\frac{\eta_{m+1}(v_j)}{v_1 ^2}
 \sum_{k=0} ^{\rm{finite}} \left(\frac{v_j}{v_1}\right)^{2k}
\end{multline}
modulo terms with holomorphic factors in $v_k$.
\end{thm}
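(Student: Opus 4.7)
The plan is to apply the Galois direct image $\pi_\ast: f(v_1) \mapsto f(v_1)+f(-v_1)$ in the $v_1$-coordinate of equation~(\ref{eq:caj-in-v}), and then extract the principal part at $v_k=0$ for every $k$. The enabling observation is the parity identity $\eta_n(-v)=-\eta_n(v)$ from~(\ref{eq:eta_n}), so that a product of $\eta$-factors is even in $v_1$ (and hence survives $\pi_\ast$) iff it contains an even number of factors depending on $v_1$. On the LHS of~(\ref{eq:caj-in-v}), the summands $(2g-2+\ell)\eta_{n_L}$ and $\sum_i\eta_{n_i+1}(v_i)\eta_{L\setminus\{i\}}$ each carry exactly one $\eta$-factor in $v_1$ and are killed; in the triple product $-\sum_i\eta_{-1}(v_i)\eta_{n_i+1}(v_i)\eta_{n_{L\setminus\{i\}}}$ only the $i=1$ summand carries two $v_1$-factors and survives, producing the LHS of~(\ref{eq:caj-in-w}). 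The same parity count disposes of every RHS summand in the first ``stable plus $(g-1)$'' sum except the $i=1$ term, giving the first two lines of the target formula.

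The delicate piece is the Cauchy-kernel-type double sum. Its integrand is symmetric in $(i,j)$, so the only surviving contributions under $\pi_\ast$ come from unordered pairs $\{1,k\}$ (pairs not involving $1$ carry the odd factor $\eta_{n_1}(v_1)$ from the outer product $\eta_{n_{L\setminus\{i,j\}}}$ and are killed). For such a pair the Galois average in $v_1$ is computed by the elementary identity
\[
\frac{1}{2}\!\left(\frac{A-B}{u-w}+\frac{A-B}{-u-w}\right)=\frac{(A-B)\,w}{u^{2}-w^{2}},
\]
with $A=\eta_{m+1}(v_1)\eta_0(v_1)$, $B=\eta_{m+1}(v_k)\eta_0(v_k)$, $u=\eta_{-1}(v_1)$ and $w=\eta_{-1}(v_k)$. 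The $A$-piece then carries the factor $\eta_{-1}(v_k)$, a power series in $v_k$ vanishing at $v_k=0$, and so lies in the ideal of terms possessing a holomorphic-in-some-$v_k$ factor and is discarded. For the $B$-piece one applies the low-order identity $\eta_0(v)\eta_{-1}(v)=-1+O(v^2)$, an immediate consequence of $\eta_0(v)=v^{-1}+O(v)$ and $\eta_{-1}(v)=-v+O(v^3)$ read off from~(\ref{eq:eta_n}); combined with $\eta_{-1}(v)^2=v^2+O(v^4)$ and the geometric expansion $1/(v_1^{2}-v_k^{2})=v_1^{-2}\sum_n(v_k/v_1)^{2n}$, this collapses the $B$-term, modulo the holomorphic ideal, to $\eta_{m+1}(v_k)/v_1^{\,2}\cdot\sum_n(v_k/v_1)^{2n}$, with the sum effectively truncated by the Laurent degree of $\eta_{m+1}$.

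The main technical hurdle is verifying that the reduction ``modulo terms with a holomorphic factor in some $v_k$'' is stable under these manipulations. Concretely, the $O(v^2)$ errors from $\eta_0\eta_{-1}=-1+O(v^2)$ and the $O(v^4)$ errors from $\eta_{-1}(v)^2-v^2$, when multiplied against the Laurent polynomial $\eta_{m+1}(v_k)$ and against the expansion of $1/(v_1^2-v_k^2)$, must each retain a holomorphic-in-some-$v_k$ factor and therefore remain in the ideal; this requires care because $\eta_{m+1}$ has a pole of potentially high order at $v_k=0$. Once this closure of the ideal is in hand, the factor-of-two bookkeeping from passing from ordered pairs $(i,j)$ to unordered pairs $\{1,k\}$, combined with the Galois averaging, reassembles the surviving terms into~(\ref{eq:caj-in-w}).
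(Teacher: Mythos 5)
Your overall route is the paper's route: apply the Galois direct image in $v_1$ to (\ref{eq:caj-in-v}), use the oddness $\eta_n(-v)=-\eta_n(v)$ to kill every summand carrying an odd number of $v_1$-factors, and treat the Cauchy-kernel double sum by the partial-fraction average $\tfrac12\bigl(\tfrac{A-B}{u-w}+\tfrac{A-B}{-u-w}\bigr)=\tfrac{(A-B)w}{u^2-w^2}$, discarding the $A$-piece because it acquires the holomorphic factor $\eta_{-1}(v_k)$. All of that matches the paper's proof (your sign in the averaged kernel is in fact the correct one; the paper's intermediate display carries a stray minus that disappears in its final line).

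The gap is exactly at the step you flag, and your proposed way of closing it would fail. You assert that the errors coming from $\eta_0(v_k)\eta_{-1}(v_k)=-1+O(v_k^2)$ and from $\eta_{-1}(v)^2=v^2+O(v^4)$ must \emph{each} retain a holomorphic-in-some-$v_k$ factor. They do not. For instance, the first error contributes terms of the shape $\eta_{m+1}(v_k)\,O(v_k^2)\cdot v_1^{-2}(v_k/v_1)^{2n}$; since $\eta_{m+1}$ has a pole of order $2m+3$, the $n=0$ term already has a genuine principal part in $v_k$ \emph{and} a pole in $v_1$, so it does not lie in the ideal. The same is true of the second error taken alone. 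What saves the reduction is that the two corrections cancel \emph{exactly} along the diagonal: by the recursion (\ref{eq:eta-recursion}), $\eta_0(v)=-\frac1v\frac{d}{dv}\eta_{-1}(v)$, hence
\begin{equation*}
-\eta_0(v)\eta_{-1}(v)=\frac12\,\frac{d}{dw}\Bigl(\eta_{-1}(v)^2\Bigr),\qquad w=\tfrac12 v^2 ,
\end{equation*}
so that, writing $\eta_{-1}(v)^2=\tilde p(w)$, the difference
\begin{equation*}
\frac{-\eta_0(v_k)\eta_{-1}(v_k)}{\eta_{-1}(v_1)^2-\eta_{-1}(v_k)^2}-\frac{1}{v_1^2-v_k^2}
=\frac12\left(\frac{\tilde p'(w_k)}{\tilde p(w_1)-\tilde p(w_k)}-\frac{1}{w_1-w_k}\right)
\end{equation*}
has \emph{no} pole along $w_1=w_k$ and is therefore jointly holomorphic near the origin; every monomial in its expansion then carries a non-negative power of $v_1$, so its product with $\eta_{m+1}(v_k)$ does lie in the ideal. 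This diagonal-cancellation mechanism is precisely what the paper isolates (in the $t$-coordinates) in the Lemma inside Proposition~\ref{prop:Pn}, where $\frac{dv}{v-v_j}-\frac{dt}{t-t_j}$ is shown to be holomorphic near and vanishing on the diagonal. Without invoking that identity, the ``closure of the ideal'' you defer cannot be established, because the individual error terms genuinely leave the ideal.
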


We note that only finitely many terms of the
expansion contributes in the last term of (\ref{eq:caj-in-w}).
Appealing to Corollary~\ref{cor:Pab} and
Proposition~\ref{prop:Pn}, we obtain
(\ref{eq:BMalgebraic}), after
switching back to the $t$-coordinates. 
 We have thus completed the proof of
 the Bouchard-Mari\~no conjecture \cite{BM}.

\setcounter{section}{0}
\setcounter{thm}{0}
\renewcommand{\thethm}{\thesection.\arabic{thm}}
\setcounter{equation}{0}
\renewcommand{\theequation}{\thesection.\arabic{equation}}
\setcounter{figure}{0}
\setcounter{table}{0}

\begin{appendix}
\section*{Appendix. Examples of linear Hodge integrals and 
Hurwitz numbers}
\renewcommand{\thesection}{A}

In this Appendix we give a few examples of linear
Hodge integrals and Hurwitz numbers 
 computed by Michael Reinhard.

\begin{table}[htb]
  \renewcommand\arraystretch{1.5}
  \centering

\begin{tabular}{|c||c|c||c|c||cc|}
\hline 
\multicolumn{7}{|c|}{$g=2$}\tabularnewline
\hline
\hline 
$\ell=1$ & $\langle\tau_{3}\lambda_{1}\rangle_{2,1}$ & $\frac{1}{480}$ & \multicolumn{1}{c}{} &  &  & \tabularnewline
\hline
\hline 
$\ell=2$ & $\langle\tau_{2}^{2}\lambda_{1}\rangle_{2,2}$ & $\frac{5}{576}$ & \multicolumn{1}{c}{} &  &  & \tabularnewline
\hline
\hline 
\multicolumn{7}{|c|}{$g=3$}\tabularnewline
\hline
\hline 
$\ell=1$ & $\langle\tau_{6}\lambda_{1}\rangle_{3,1}$ & $\frac{7}{138\,240}$ & $\langle\tau_{5}\lambda_{2}\rangle_{3,1}$ & $\frac{41}{580\,608}$ &  & \tabularnewline
\hline
\hline 
\multirow{2}{*}{$\ell=2$} & $\langle\tau_{2}\tau_{5}\lambda_{1}\rangle_{3,2}$ & $\frac{323}{483\,840}$ & $\langle\tau_{2}\tau_{4}\lambda_{2}\rangle_{3,2}$ & $\frac{2329}{2\,903\,040}$ &  & \tabularnewline
\cline{2-7} 
 & $\langle\tau_{3}\tau_{4}\lambda_{1}\rangle_{3,2}$ & $\frac{19}{17\,920}$ & $\langle\tau_{3}^{2}\lambda_{2}\rangle_{3,2}$ & $\frac{1501}{1\,451\,520}$ &  & \tabularnewline
\hline
\hline 
$\ell=3$ & $\langle\tau_{2}^{2}\tau_{4}\lambda_{1}\rangle_{3,3}$ & $\frac{541}{60\,480}$ & $\langle\tau_{2}\tau_{3}^{3}\lambda_{1}\rangle_{3,3}$ & $\frac{89}{7680}$ & \multicolumn{1}{c|}{$\langle\tau_{2}^{2}\tau_{3}\lambda_{2}\rangle_{3,3}$} & $\frac{859}{96\,768}$\tabularnewline
\hline
\hline 
$\ell=4$ & $\langle\tau_{2}^{3}\tau_{3}\lambda_{1}\rangle_{3,4}$ & $\frac{395}{3456}$ & $\langle\tau_{2}^{4}\lambda_{2}\rangle_{3,4}$ & $\frac{17}{192}$ &  & \tabularnewline
\hline
\hline 
\multicolumn{7}{|c|}{$g=4$}\tabularnewline
\hline
\hline 
$\ell=1$ & $\langle\tau_{9}\lambda_{1}\rangle_{4,1}$ & $\frac{1}{1\,244\,160}$ & $\langle\tau_{8}\lambda_{2}\rangle_{4,1}$ & $\frac{1357}{696\,729\,600}$ & \multicolumn{1}{c|}{$\langle\tau_{7}\lambda_{3}\rangle_{4,1}$} & $\frac{13}{6\,220\,800}$\tabularnewline
\hline
\hline 
\multirow{4}{*}{$\ell=2$} & $\langle\tau_{2}\tau_{8}\lambda_{1}\rangle_{4,2}$ & $\frac{841}{38\,707\,200}$ & $\langle\tau_{2}\tau_{7}\lambda_{2}\rangle_{4,2}$ & $\frac{33\,391}{696\,729\,600}$ & \multicolumn{1}{c|}{$\langle\tau_{3}\tau_{5}\lambda_{3}\rangle_{4,2}$} & $\frac{2609}{29\,030\,400}$\tabularnewline
\cline{2-7} 
 & $\langle\tau_{3}\tau_{7}\lambda_{1}\rangle_{4,2}$ & $\frac{221}{4\,147\,200}$ & $\langle\tau_{3}\tau_{6}\lambda_{2}\rangle_{4,2}$ & $\frac{1153}{11\,059\,200}$ & \multicolumn{1}{c|}{$\langle\tau_{4}^{2}\lambda_{3}\rangle_{4,2}$} & $\frac{6421}{58\,060\,800}$\tabularnewline
\cline{2-7} 
 & $\langle\tau_{4}\tau_{6}\lambda_{1}\rangle_{4,2}$ & $\frac{517}{5\,806\,080}$ & $\langle\tau_{4}\tau_{5}\lambda_{2}\rangle_{4,2}$ & $\frac{979}{6\,451\,200}$ &  & \tabularnewline
\cline{2-7} 
 & $\langle\tau_{5}^{2}\lambda_{1}\rangle_{4,2}$ & $\frac{1223}{11\,612\,160}$ & $\langle\tau_{2}\tau_{6}\lambda_{3}\rangle_{4,2}$ & $\frac{5477}{116\,121\,600}$ &  & \tabularnewline
\hline
\hline 
\multirow{4}{*}{$\ell=3$} & $\langle\tau_{2}^{2}\tau_{7}\lambda_{1}\rangle_{4,3}$ & $\frac{3487}{5\,806\,080}$ & $\langle\tau_{3}\tau_{4}^{2}\lambda_{1}\rangle_{4,3}$ & $\frac{137}{46\,080}$ & \multicolumn{1}{c|}{$\langle\tau_{3}^{2}\tau_{4}\lambda_{2}\rangle_{4,3}$} & $\frac{58\,951}{16\,588\,800}$\tabularnewline
\cline{2-7} 
 & $\langle\tau_{2}\tau_{3}\tau_{6}\lambda_{1}\rangle_{4,3}$ & $\frac{50\,243}{38\,707\,200}$ & $\langle\tau_{2}^{2}\tau_{6}\lambda_{2}\rangle_{4,3}$ & $\frac{137\,843}{116\,121\,600}$ & \multicolumn{1}{c|}{$\langle\tau_{2}^{2}\tau_{5}\lambda_{3}\rangle_{4,3}$} & $\frac{241}{230\,400}$\tabularnewline
\cline{2-7} 
 & $\langle\tau_{2}\tau_{4}\tau_{5}\lambda_{1}\rangle_{4,3}$ & $\frac{2597}{1\,382\,400}$ & $\langle\tau_{2}\tau_{3}\tau_{5}\lambda_{2}\rangle_{4,3}$ & $\frac{577}{258\,048}$ & \multicolumn{1}{c|}{$\langle\tau_{2}\tau_{3}\tau_{4}\lambda_{3}\rangle_{4,3}$} & $\frac{27\,821}{16\,588\,800}$\tabularnewline
\cline{2-7} 
 & $\langle\tau_{3}^{2}\tau_{5}\lambda_{1}\rangle_{4,3}$ & $\frac{3359}{1\,382\,400}$ & $\langle\tau_{2}\tau_{4}^{2}\lambda_{2}\rangle_{4,3}$ & $\frac{2657}{967\,680}$ & \multicolumn{1}{c|}{$\langle\tau_{3}^{3}\lambda_{3}\rangle_{4,3}$} & $\frac{4531}{2\,073\,600}$\tabularnewline
\hline
\hline 
\multicolumn{7}{|c|}{$g=5$}\tabularnewline
\hline
\hline 
\multirow{2}{*}{$\ell=1$} & $\langle\tau_{12}\lambda_{1}\rangle_{5,1}$ & $\frac{1}{106\,168\,320}$ & $\langle\tau_{10}\lambda_{3}\rangle_{5,1}$ & $\frac{71}{1\,114\,767\,360}$ &  & \tabularnewline
\cline{2-7} 
 & $\langle\tau_{11}\lambda_{2}\rangle_{5,1}$ & $\frac{577}{16\,721\,510\,400}$ & $\langle\tau_{9}\lambda_{4}\rangle_{5,1}$ & $\frac{21\,481}{367\,873\,228\,800}$ &  & \tabularnewline
\hline
\end{tabular}

\bigskip
  
  \caption{Examples of linear Hodge integrals.}
\end{table}

Some examples of $g = 5$ Hurwitz numbers:
\begin{align*}
h_{5,(1)} &= 0 & h_{5,(4)} &= 272\,097\,280 \\
h_{5,(2)} &= 1/2 & h_{5,(5)} &= 333\,251\,953\,125 \\
h_{5,(3)} &= 59\,049 & h_{5,(6)} &= 202\,252\,053\,177\,720 \\
\end{align*}

\begin{table}[htb]
  \centering
  
  \begin{tabular}{|c||c|c|c|c|}
\hline 
$h_{g,\mu}$ & $g=1$ & $g=2$ & $g=3$ & $g=4$\tabularnewline
\hline
\hline 
$(1)$ & $0$ & $0$ & $0$ & $0$\tabularnewline
\hline 
$(2)$ & $1/2$ & $1/2$ & $1/2$ & $1/2$\tabularnewline
\hline 
$(1,1)$ & $1/2$ & $1/2$ & $1/2$ & $1/2$\tabularnewline
\hline 
$(3)$ & $9$ & $81$ & $729$ & $6561$\tabularnewline
\hline 
$(2,1)$ & $40$ & $364$ & $3280$ & $29\,524$\tabularnewline
\hline 
$(1,1,1)$ & $40$ & $364$ & $3280$ & $29\,524$\tabularnewline
\hline 
$(4)$ & $160$ & $5824$ & $209\,920$ & $7\,558\,144$\tabularnewline
\hline 
$(3,1)$ & $1215$ & $45\,927$ & $1\,673\,055$ & $60\,407\,127$\tabularnewline
\hline 
$(2,2)$ & $480$ & $17\,472$ & $629\,760$ & $22\,674\,432$\tabularnewline
\hline 
$(2,1,1)$ & $5460$ & $206\,640$ & $7\,528\,620$ & $271\,831\,560$\tabularnewline
\hline 
$(1,1,1,1)$ & $5460$ & $206\,640$ & $7\,528\,620$ & $ $\tabularnewline
\hline 
$(5)$ & $3125$ & $328\,125$ & $33\,203\,125$ & $3\,330\,078\,125$\tabularnewline
\hline 
$(4,1)$ & $35\,840$ & $3\,956\,736$ & $409\,108\,480$ & $41\,394\,569\,216$\tabularnewline
\hline 
$(3,2)$ & $26\,460$ & $2\,748\,816$ & $277\,118\,820$ & $27\,762\,350\,616$\tabularnewline
\hline 
$(3,1,1)$ & $234\,360$ & $26\,184\,060$ & $2\,719\,617\,120$ & $275\,661\,886\,500$\tabularnewline
\hline 
$(2,2,1)$ & $188\,160$ & $20\,160\,000$ & $2\,059\,960\,320$ & $207\,505\,858\,560$\tabularnewline
\hline 
$(2,1,1,1)$ & $1\,189\,440$ & $131\,670\,000$ & $13\,626\,893\,280$ & $ $\tabularnewline
\hline 
$(1,1,1,1,1)$ & $1\,189\,440$ & $131\,670\,000$ & $ $ & $ $\tabularnewline
\hline 
$(6)$ & $68\,040$ & $16\,901\,136$ & $3\,931\,876\,080$ & $895\,132\,294\,056$\tabularnewline
\hline 
$(5,1)$ & $1\,093\,750$ & $287\,109\,375$ & $68\,750\,000\,000$ & $15\,885\,009\,765\,625$\tabularnewline
\hline 
$(4,2)$ & $788\,480$ & $192\,783\,360$ & $44\,490\,434\,560$ & $10\,093\,234\,511\,360$\tabularnewline
\hline 
$(4,1,1)$ & $9\,838\,080$ & $2\,638\,056\,960$ & $638\,265\,788\,160$ & $148\,222\,087\,453\,440$\tabularnewline
\hline 
$(3,3)$ & $357\,210$ & $86\,113\,125$ & $19\,797\,948\,720$ & $4\,487\,187\,539\,835$\tabularnewline
\hline 
$(3,2,1)$ & $14\,696\,640$ & $3\,710\,765\,520$ & $872\,470\,478\,880$ & $199\,914\,163\,328\,880$\tabularnewline
\hline 
$(3,1,1,1)$ & $65\,998\,800$ & $17\,634\,743\,280$ & $4\,259\,736\,280\,800$ & $ $\tabularnewline
\hline 
$(2,2,2)$ & $2\,016\,000$ & $486\,541\,440$ & $111\,644\,332\,800$ & $25\,269\,270\,586\,560$\tabularnewline
\hline 
$(2,2,1,1)$ & $80\,438\,400$ & $20\,589\,085\,440$ & $4\,874\,762\,692\,800$ & $ $\tabularnewline
\hline 
$(2,1,1,1,1)$ & $382\,536\,000$ & $100\,557\,737\,280$ & $ $ & $ $\tabularnewline
\hline 
$(1,1,1,1,1,1)$ & $382\,536\,000$ & $ $ & $ $ & $ $\tabularnewline
\hline
\end{tabular}
\bigskip

  \caption{Examples of Hurwitz numbers for $1 \leq g \leq 4$
  and  $|\mu| \leq 6$.}
\end{table}

\end{appendix}


\providecommand{\bysame}{\leavevmode\hbox to3em{\hrulefill}\thinspace}

\bibliographystyle{amsplain}

\end{document}